\newtheorem*{Thm}{Theorem}    
\newtheorem*{Cor}{Corollary}
\newtheorem{Cor1}{Corollary}
\newtheorem*{ThmA}{Theorem A}
\newtheorem{Lem}{Lemma}
\newtheorem{Pro}{Proposition}
\theoremstyle{remark}
\newtheorem*{Rems}{Remarks}
\newtheorem*{Ex}{Example}
\DeclareMathOperator{\Aut}{Aut}
\DeclareMathOperator{\Int}{Int}
\DeclareMathOperator{\im}{im}
\DeclareMathOperator{\dc}{dc}
\DeclareMathOperator{\pr}{pr}
\DeclareMathOperator{\tr}{tr}
\newcommand{\Cal}{\mathcal}
\newcommand{\fr}{\mathfrak}
\newcommand{\C}{\mathbb{C}}        
\newcommand{\R}{\mathbb{R}}        
\newcommand{\Rp}{\mathbb{R}^+}     
\newcommand{\Z}{\mathbb{Z}}        %
\newcommand{\N}{\mathbb{N}}        %
\newcommand{\T}{\mathbb{T}}        %
\newcommand{\He}{\mathbb{H}}       
\newcommand{\G}{\mathbb{G}}        
\newcommand{\SO}{\mathit{SO}}
\newcommand{\SU}{\mathit{SU}}
\newcommand{\VN}{\mathit{VN}}
\newcommand{\bW}{\overline{\Cal W}}
\newcommand{\bbW}{\overline{\overline{\Cal W}}}
\newcommand{\bnu}{{\boldsymbol \nu}}
\newcommand{\brho}{{\boldsymbol \rho}}
\newcommand{\btheta}{{\boldsymbol \theta}}
\newcommand{\bx}{{\mathbf x}}
\begin{document}
\title[weak amenability]{On weak amenability of Fourier algebras}
\author{Viktor  Losert}
\address{Institut f\"ur Mathematik, Universit\"at Wien, Strudlhofg.\ 4,
  A 1090 Wien, Austria}
\email{Viktor.Losert@UNIVIE.AC.AT}
\date{29 November 2019}


\maketitle

\baselineskip=1.3\normalbaselineskip
\section{Introduction and Main results}\vspace{1mm} \label{Intro}
Let $A$ be a Banach algebra, $X$ a Banach$\ A$--bimodule. A linear map
$D\!:A\to X$ is called a derivation if \ $D(ab)=aD(b)+D(a)\,b$ \,for
$a,b\in A$\,. By $A'$ we denote the dual space of $A$ with the dual action
(also called the dual module of $A$).
$A$ is called {\it weakly amenable} if every bounded derivation $D\!:A\to A'$
is inner \ (i.e. there exists $f\in A'$ such that $D(a)=af-fa$ \,for $a\in A$).

From now, we assume that $A$ is commutative.
$X$ is called a symmetric\linebreak $A$--bimodule if $ax=xa$ for
$a\in A\,,\,x\in X$\,.
Then (\cite{D}\,Th.\,2.8.63) $A$ is weakly amenable iff there is no
non--zero bounded derivation from $A$
to any symmetric Banach $A$--bimodule $X$\,.\vspace{2mm}

Our main subject will be $A=A(G)$ \, the Fourier algebra of a locally compact
group $G$\,.\vspace{-2mm}
\begin{Thm}
Let $G$ be a connected locally compact group. If
$A(G)$ is weakly amenable then $G$ is abelian. 
\end{Thm}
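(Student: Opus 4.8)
\emph{Strategy.} I would argue the contrapositive: assuming $G$ connected and non-abelian, I produce a non-zero bounded derivation from $A(G)$ into a symmetric Banach $A(G)$-bimodule, which by the characterization recalled above rules out weak amenability. It is in fact convenient to work throughout with the dual module $A(G)'=\VN(G)$: since $A(G)$ is commutative, every inner derivation into $A(G)'$ vanishes, so ``$A(G)$ is not weakly amenable'' is the same as ``there is a non-zero bounded derivation $D\colon A(G)\to A(G)'$''.

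\emph{Permanence under closed subgroups.} The engine of the reduction is the following stability property. If $H\le G$ is a closed subgroup, Herz's restriction theorem identifies $A(H)$ isometrically with the quotient algebra $A(G)/I(H)$, where $I(H)=\{u\in A(G):u|_H=0\}$. I claim weak amenability passes from $A(G)$ to such quotients: writing $\pi\colon A(G)\to A(G)/I(H)$ for the quotient map, one has $A(H)'=I(H)^\perp$, a sub-bimodule of $A(G)'$, and the pullback along $\pi$ of the dual module structure of $A(H)$ agrees with it. Hence any bounded derivation $D\colon A(H)\to A(H)'$ lifts to the bounded derivation $\iota\circ D\circ\pi\colon A(G)\to A(G)'$ (with $\iota$ the inclusion $A(H)'=I(H)^\perp\hookrightarrow A(G)'$); if $A(G)$ is weakly amenable this lift is inner, hence zero by commutativity, and since $\pi$ is onto and $\iota$ injective we get $D=0$. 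Consequently it suffices to exhibit \emph{one} non-abelian closed subgroup $H\le G$ for which $A(H)$ fails to be weakly amenable.

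\emph{Locating a test subgroup.} Next I would use the structure theory of connected locally compact groups --- they are pro-Lie (Gleason--Yamabe), and their Lie quotients admit a Levi decomposition --- to extract from a non-abelian $G$ a non-abelian connected Lie subgroup: two one-parameter subgroups whose images in a non-abelian Lie quotient fail to commute lift to non-commuting one-parameter subgroups of $G$ and generate such a subgroup. Inside a non-abelian connected Lie group, the classification of low-dimensional non-abelian Lie algebras then furnishes a closed subgroup isomorphic to one of three ``test groups'': $\SU(2)$ (equivalently $\SO(3)$) in the presence of a compact semisimple factor; the $ax+b$ group, obtained from a non-nilpotent solvable part and also sitting inside every non-compact simple factor; or the Heisenberg group $\He$, coming from a non-abelian nilpotent part. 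By the permanence step it is enough to treat these three groups.

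\emph{The derivations, and the main obstacle.} For the compact case I would invoke Johnson's theorem that $A(\SO(3))$ is not weakly amenable, whose derivation is built by differentiating matrix coefficients and exploiting the growth of the representation dimensions; this transfers to $\SU(2)$. For the two solvable test groups I would construct a derivation from the infinitesimal structure: a left-invariant vector field $\tilde X$ satisfies the Leibniz rule $\tilde X(uv)=(\tilde Xu)\,v+u\,(\tilde X v)$, so differentiation along a direction $X\in\fr g$ with $[X,Y]\ne0$ yields a derivation into a symmetric (pointwise-multiplication) module, the non-vanishing commutator being precisely what prevents it from collapsing. The crux of the whole argument --- and the step I expect to be genuinely hard --- is proving that such a derivation is \emph{bounded} for the Fourier algebra norm. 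Unlike the abelian situation, where $A(G)=L^1(\hat G)$ is transparent and (by Johnson) weakly amenable, the $A(G)$-norm of a non-abelian, non-compact group is not explicitly computable; one must instead control the derivation through the operator-space and $\VN(G)$ structure, using the explicit Plancherel and representation theory of $ax+b$ and $\He$ to estimate the relevant matrix coefficients. Carrying out these boundedness estimates uniformly is where the real analytic work of the proof lies.
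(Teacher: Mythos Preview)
Your overall strategy --- heredity to closed subgroups, then a finite list of test groups handled by explicit differentiation derivations --- is exactly the paper's approach. But there are two genuine gaps.

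\textbf{The test list is too short.} Your trichotomy $\SU(2)$/$ax+b$/$\He$ does not exhaust the minimal non-abelian connected Lie groups. The Lie algebra of $\widetilde{M(2)}$ (basis $X,Y,Z$ with $[Z,X]=Y$, $[Z,Y]=-X$, $[X,Y]=0$) has \emph{no} non-abelian proper subalgebra, so $\widetilde{M(2)}$ contains no closed copy of $ax+b$ or $\He$; the same holds for the Gr\'elaud groups $\G_a$. The paper's list \eqref{minsc} therefore has five entries, not three. Worse, for a general connected locally compact $G$ your ``extract a non-abelian connected Lie subgroup'' step does not produce a \emph{closed} subgroup: the image of (say) $\He$ or $\widetilde{M(2)}$ under a dense-range homomorphism need not be closed, and its closure need not be a Lie group at all. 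This is precisely why the paper must also treat the solenoidal extensions $\He_{\Cal K}$ and $M(2)_{\Cal K}$ in \eqref{minco}; the latter were the main open case prior to this paper.

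\textbf{The target module is not always $\VN$.} You propose to work throughout with derivations into $A(G)'=\VN(G)$, expecting the hard part to be a boundedness estimate for $\tilde X\colon A(G)\to\VN(G)$. For $\SU(2)$, $ax+b$ and $\He_{\Cal K}$ this does work. But for $\He$ itself (and for $\widetilde{M(2)}$, $M(2)_{\Cal K}$, $\G_a$) the natural differentiation derivation is \emph{not} bounded into $\VN$; on the Plancherel side $\pi_t(d)=-it$ for $\He$, and the factor $t$ is unbounded against Plancherel measure $\lvert t\rvert\,dt$. The paper's real work (Sections~\ref{heis}, \ref{euclm}--\ref{gr}) is to construct, via explicit formulas for dual convolution on $\widehat G$, an auxiliary symmetric Banach $A(G)$-module $\Cal W$ (a weighted $L^1$ of operator- or kernel-valued fields) into which the derivation \emph{does} map boundedly. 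Your abstract equivalence ``not weakly amenable $\Leftrightarrow$ some nonzero $D\colon A\to A'$'' is of course correct, but it does not help you build such a $D$; the paper instead exhibits $D$ into a hand-made module, and for $M(2)_{\Cal K}$ this construction is the heart of the argument.
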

\begin{Cor}
Let $G$ be a locally compact group with identity component $G^0$.\\
$A(G)$ is weakly amenable if and only if $G^0$ is abelian.\vspace{1mm}
\end{Cor}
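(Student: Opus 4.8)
The plan is to deduce the Corollary from the Theorem by combining it with two stability properties. The backward implication is already known: if $G^0$ is abelian then $A(G)$ is weakly amenable (this is the sufficiency direction established by Forrest and Runde), so the entire content lies in the forward implication. Assume, then, that $A(G)$ is weakly amenable; I want to show that $G^0$ is abelian.

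First I would pass to the identity component. Since $G^0$ is a closed subgroup of $G$, Herz's restriction theorem provides a surjective, norm-decreasing algebra homomorphism $A(G)\to A(G^0)$ given by restriction of functions; equivalently, $A(G^0)\cong A(G)/I(G^0)$, where $I(G^0)$ is the closed ideal of functions in $A(G)$ vanishing on $G^0$. Thus $A(G^0)$ is a quotient algebra of $A(G)$.

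Second, I would verify that weak amenability passes to such quotients. Using the characterization quoted above (\cite{D}\,Th.\,2.8.63), it suffices to show that every bounded derivation $D$ from $A(G^0)$ into a symmetric Banach $A(G^0)$--bimodule $X$ is zero. Viewing $X$ as a symmetric $A(G)$--bimodule through the quotient homomorphism $\pi\colon A(G)\to A(G^0)$, the composition $D\circ\pi$ is a bounded derivation from $A(G)$ into $X$; weak amenability of $A(G)$ forces $D\circ\pi=0$, and since $\pi$ has dense range this gives $D=0$. Hence $A(G^0)$ is weakly amenable.

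Finally, $G^0$ is connected, so the Theorem applies directly: weak amenability of $A(G^0)$ forces $G^0$ to be abelian, which completes the forward implication and hence the Corollary. The only genuine inputs beyond the Theorem are Herz's theorem, identifying $A(G^0)$ as a quotient of $A(G)$, and the (routine) stability of weak amenability under quotients; I expect the former to be the point most in need of a careful reference, while the remaining steps are formal.
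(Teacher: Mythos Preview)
Your proof is correct and follows essentially the same route as the paper: the backward implication is attributed to Forrest--Runde, and the forward implication is obtained by passing weak amenability from $A(G)$ to $A(G^0)$ and then invoking the Theorem. The only difference is cosmetic: the paper cites the heredity result for closed subgroups (\cite{FR}\,L.\,3.1) as a black box, whereas you reprove it via Herz's restriction theorem and the stability of weak amenability under quotients.
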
\noindent
{\bf Some History} \\
The notion of weak amenability was introduced by
Bade, Curtis and Dales for commutative Banach algebras and then extended by
Johnson, see \cite{D}\,p.\,306 for more details and references. For the
Fourier algebra it was shown in \cite{J} that $A(G)$ is not weakly amenable
for $G=\SO(3)$ and then in \cite{Pl} for $G$ any non-abelian compact connected
Lie group. For non-compact groups, the case of the $ax+b$ group was settled
in \cite{CG} and then the Heisenberg group in \cite{CG2}. Finally in
\cite{LLSS} it was shown that $A(G)$ is not weakly amenable for any
non-abelian connected Lie group $G$\,.
\begin{proof}[Proof of the Corollary]
If $G^0$ is abelian it was shown by Forrest and Runde (\cite{FR} Th.\,3.3)
that $A(G)$ is weakly amenable. There is a heredity for subgroups
(\cite{FR} L.\,3.1): \ if $G$ is any locally compact
group and $A(G)$  is weakly amenable then $A(H)$ is weakly
amenable for every closed subgroup $H$ of $G$\,. Hence the converse in
the Corollary follows from the Theorem (and this was already conjectured
in \cite{FR}).
\end{proof}
\begin{proof}[Outline of Proof of the Theorem]
The proof of the theorem will take most of the remaining sections. The
basic strategy follows the approach of the earlier authors. By the
heredity for subgroups (see above) it will be enough to prove the theorem
for ``small'' non--abelian connected groups. In Section\,\ref{mini} we
present lists \eqref{minsc}$,$\eqref{minco} of the minimal non--abelian
connected groups (extending that in \cite{LLSS}). Using the results
from Appendix\,\ref{dense} on connected dense subgroups we show
(Proposition\,\ref{minipro}) that every non--abelian connected locally
compact group has a closed subgroup
isomorphic to one of the groups in \eqref{minsc}$\,\cup\,$\eqref{minco}.
\\
For the treatment of $A(G)$ for specific groups, \cite{LLSS} used techniques
from spectral synthesis. Put $\check\Delta_G=\{(g,g^{-1}):g\in G\}$ (the
anti--diagonal). For a connected Lie group $G$ they show
(\cite{LLSS}\,Th.\,3.2) that $A(G)$ is weakly amenable iff $\check\Delta_G$
is of local synthesis for $G\times G$\,. This synthesis property is inherited
by locally isomorphic groups, hence it was enough to check the simply
connected groups from \eqref{minsc}. But, so far, this approach works
only for Lie groups.
\\
We will use the method of Johnson, Choi, Ghandehari who consider
explicit derivations arising from differentiation operators.
Differentiation operators are prototypes of derivations on function spaces
and their continuity properties express some kind of smoothness for the
functions. It is a basic construction for a Lie group $G$ with Lie algebra
$\fr g$ that the elements of $\fr g$ correspond to point derivations (at $e$)
on the algebra $C^\infty(G)$ of $C^\infty$-functions.
These point derivations extend to left invariant (or right invariant)
derivations on $C^\infty(G)$ (\cite{V}\,Sec.\,2.3 and 1.1). Norm estimates for
such derivations are done using Fourier transforms. In
Appendix\,\ref{AppenB} we review the relevant properties of the
non--commutative Fourier transform and also the applications to Fourier
algebras. It turns out that in some cases the derivations that we consider
do not map to the dual $\VN(G)$ of $A(G)$. Then (taking up constructions
from \cite{CG2}) we consider appropriate
$A(G)$--modules which needs some knowledge about the dual convolution for
these groups. In the case of the group $M(2)$ (and the related groups
$\widetilde{M(2)}$ and $M(2)_{\Cal K}$) this becomes somewhat more involved. 
\end{proof}\vspace*{1mm plus 1cm}\noindent
{\bf Notations} \\
On semidirect products $G=H\ltimes K$\,: \;in those examples where the
notation should (hopefully) make it easy to distinguish elements of $H$ and
$K$ we will use the internal presentation (see \cite{HR}\,I.2.6). Thus
$H$ and $K$ are considered as (closed) subgroups of $G$\,, with $H$ normal,
$H\cap K=\{e\} \,(e$ denoting the unit element of $G$\,),\ G=HK
\,(\,$(h,k)\mapsto hk$ defining a homeomorphism $H\times K\to G$\,). There
is given a continuous (left) action of $K$ on $H$\,, denoted by $\circ$\,,
(equivalently, a continuous homomorphism $K\to\Aut(H)$\,) and the group
multiplication of $G$ is governed by the rule $kh=(k\circ h)\,k$ \,for
$k\in K,\,h\in H$\,. Then a mapping $\varphi\!:G\to G_1\ (G_1$ some other
topological group) is a continuous homomorphism iff its restrictions to
$H$ and $K$ are continuous homomorphisms and it preserves the action,
i.e. $\varphi(k\circ h)=\varphi(k)\varphi(h)\varphi(k)^{-1}$. In
Section\,\ref{axb} we use a slightly modified notation.
\section{Minimal non--abelian connected groups}\vspace{1mm} \label{mini}
In \cite{LLSS}\,Prop.\,3.1 the minimal non--abelian real Lie algebras are
listed. The corresponding simply connected (and connected) Lie groups
are
\begin{equation}\tag{$\Cal M$}\label{minsc}
\SU(2)\,,\qquad \R^n\rtimes\R\quad (n\in\{1,2\})
\end{equation}
with the actions ($t,x,y,z\in\R\,,\; v\in\R^2$)
\begin{flalign*}
n=1\quad &t\circ x=e^tx &&ax+b \text{ \it group} \\
n=2\quad &t\circ(y,z)=(y,z+ty) &&\He \text{ \it Heisenberg group}\\
 &t\circ v=A_tv\ \text{ with } A_t=\begin{pmatrix}\cos t &-\sin t\\
\sin t & \phantom{-}\cos t
\end{pmatrix}&&\widetilde{M(2)}\text{ \it universal covering of the}\\[-2.5mm]
\intertext{\hfill\it Euclidian motion group of the plane}\\[-9mm]
 &t\circ v=A_tv\ \text{ with } A_t=e^{at}(\begin{smallmatrix}\cos t &-\sin t\\
\sin t & \phantom{-}\cos t
\end{smallmatrix})&&\G_a\text{ \it Gr\'elaud groups },\;a>0\,.
\end{flalign*}\noindent
Their Lie algebras do not have a proper non--abelian subalgebra and any
non--abelian finite dimensional real Lie algebra contains a copy of one
of them.
\begin{Lem}   \label{minile}
Let $G$ be a non--abelian connected locally compact group. Then there exists
a group $H$ from \eqref{minsc} and a continuous homomorphism
$\varphi\!:H\to G$ with discrete kernel.
\end{Lem}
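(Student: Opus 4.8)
The plan is to pass to a Lie quotient of $G$, locate a minimal non-abelian subalgebra there, and then carry a finite-dimensional non-abelian subalgebra back up to $G$ through the compact kernel before integrating it; the last step is the real work.

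First I would invoke the Gleason--Yamabe structure theorem: a connected locally compact group is a projective limit of Lie groups, so $G=\varprojlim_\alpha G/N_\alpha$ with each $N_\alpha$ a compact normal subgroup and each $G/N_\alpha$ a Lie group. Since $G$ is non-abelian there are $a,b\in G$ with $c:=aba^{-1}b^{-1}\neq e$, and because $\bigcap_\alpha N_\alpha=\{e\}$ we may pick $\alpha$ with $c\notin N_\alpha$; then the images of $a,b$ in $L:=G/N_\alpha$ do not commute, so $L$ is a non-abelian connected Lie group. By the property of the list $\eqref{minsc}$ quoted above, the non-abelian Lie algebra $\mathrm{Lie}(L)$ contains a minimal non-abelian subalgebra isomorphic to $\mathrm{Lie}(H)$ for some $H$ from $\eqref{minsc}$.

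The main obstacle is realizing this inside $G$ rather than only inside the quotient $L$. Working with the pro-Lie algebra $\fr g=L(G)=\varprojlim_\alpha\mathrm{Lie}(G/N_\alpha)$ and the exponential $\exp_G\colon\fr g\to G$, non-abelianness of $G$ gives $X,Y\in\fr g$ with $[X,Y]\neq 0$ projecting to a non-commuting pair in $\mathrm{Lie}(L)$. I would \emph{not} attempt to lift a fixed copy of $\mathrm{Lie}(H)$: the extension of a solvable $\fr h$ by the kernel algebra $L(N_\alpha)$ need not split (already a central $\R$-extension of the Heisenberg algebra is non-split). Instead I use the flexibility of the statement, namely that it suffices to produce \emph{some} finite-dimensional non-abelian subalgebra of $\fr g$, since any such algebra again contains a copy of one of the $\mathrm{Lie}(H)$. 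The kernel algebra $L(N_\alpha)$ is of compact type; splitting off its compact semisimple ideal by complete reducibility reduces matters to an abelian pro-toral kernel on which $X$ and $Y$ act. Because $N_\alpha$ is compact, this action decomposes into finite-dimensional invariant pieces, and restricting to finitely many of them confines $X$, $Y$ and the nonzero bracket $[X,Y]$ to a finite-dimensional $\mathrm{ad}$-invariant subalgebra $\fr h'\subseteq\fr g$. This is precisely where compactness of the kernel is essential, and I expect this extraction to be the crux of the argument.

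Finally I would choose a minimal non-abelian subalgebra $\fr h_0\subseteq\fr h'$ with $\fr h_0\cong\mathrm{Lie}(H)$ for some $H$ from $\eqref{minsc}$. By pro-Lie theory a finite-dimensional subalgebra of $\fr g$ integrates to an immersed Lie subgroup of $G$, and since $H$ is the simply connected group with Lie algebra $\fr h_0$, the inclusion $\fr h_0\hookrightarrow\fr g$ integrates to a continuous homomorphism $\varphi\colon H\to G$ whose differential is that inclusion. As $d\varphi$ is injective, $\ker\varphi$ is discrete, which is the assertion. The reduction to a Lie quotient and the final integration are standard; the finite-dimensional trapping in the preceding paragraph is the step I expect to require the most care.
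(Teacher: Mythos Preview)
Your approach is genuinely different from the paper's, and the difference matters. You work with Lie \emph{quotients} $G/N_\alpha$ and then try to lift back into $G$ via the pro-Lie algebra. The paper instead invokes the Iwasawa-pair representation $G\cong (L\times K)/D$ with $L$ a connected Lie group, $K$ compact, $D$ discrete central: this immediately produces homomorphisms $j_L\colon L\to G$ and $j_K\colon K\to G$ with discrete kernel, i.e.\ maps \emph{into} $G$, so no lifting is needed. Density of the image of $L\times K^0$ forces one of $L,K^0$ to be non-abelian; then one finds $H\to L$ (via the minimal subalgebra list) or $\SU(2)\to K^0$ (via the structure of compact connected groups) and composes.

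Your ``finite-dimensional trapping'' step is where the gap lies. After reducing to an abelian pro-toral kernel $\fr z=L(N_\alpha)$, you have an extension $0\to\fr z\to\fr g\to\mathrm{Lie}(L)\to 0$. You decompose $\fr z$ into finite-dimensional $\mathrm{ad}$-invariant summands $V_j$, but $X,Y$ are not elements of $\fr z$, so ``restricting to finitely many $V_j$'' yields at best a finite-dimensional \emph{quotient} of $\fr g$ (namely $\fr g/\prod_{j\notin F}V_j$), not a subalgebra. Turning that into a subalgebra would require a section of the extension over some finite-dimensional piece, which is exactly the splitting you already flagged as potentially failing. The claim that a non-abelian pro-Lie algebra of a connected locally compact group contains a finite-dimensional non-abelian subalgebra is true, but the clean reason is precisely the product structure $\fr g\cong\fr l\times\fr k$ coming from the Iwasawa pair (with $\fr l$ finite-dimensional and $\fr k$ a product of compact simple ideals and an abelian part). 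So the step you identify as the crux is essentially equivalent to what the paper cites, and your sketch does not supply an independent argument for it.
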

\begin{proof}
We represent $G$ using an Iwasawa pair, $G\cong (L\times K)/D$ where
$L$ is a connected Lie group, $K$ a compact group (not necessarily connected),
$D$ a discrete central subgroup (\cite{G1}\,Prop.\,5, see also
\cite{PW}\,sec.\,2 for more properties and \cite{L}\,p.\,120 for further
references).  This gives
homomorphisms $j_L\!\!:L\to G\,,\ j_K\!\!:K\to G$ with discrete kernel
(in fact, it is easy to modify the representation so that $j_L\,,\,j_K$ are
injective). $G$ being connected, the image of $L\times K^0$ must be dense.
Hence if $G$ is non--abelian, then either $L$ or $K^0$ must be non--abelian.
If $L$ is non--abelian there exists (using e.g.\,\cite{V}\,Th.\,2.7.5 and
Cor.\,2.7.4) a group $H$ from \eqref{minsc} and a
homomorphism $\varphi_0\!\!:H\to L$ with discrete kernel. Then we may take
$\varphi=j_L\circ\varphi_0$\,. Similarly, if $K^0$ is non--abelian then
(using e.g.\,\cite{HM}\,Cor.\,9.6,\,Th.\,9.19(i),(ii) and Prop.\,6.46(IV)) it
follows that there exists a homomorphism $\varphi_0\!\!:\SU(2)\to K$ with
discrete kernel.
\end{proof}
We see that for arbitrary connected groups one has to consider also
closures of non--abelian homomorphic images of the groups from \eqref{minsc}.
This gives in addition the groups
\begin{equation}\tag{$\Cal M+$}\label{minco}
\SO(3)\,,\qquad
\He_{\Cal K}\;,\qquad 
M(2)_{\Cal K} 
\end{equation}
where $\Cal K$ is a (non--zero)
solenoidal group \;i.e. 
$\Cal K$ compact and there exists a continuous homomorphism
$\varphi_0\!\!:\R\to \Cal K$ with dense image, $\Cal K$ is written additively.
\\[2mm]
$\He_{\Cal K}=(\R\times\Cal K)\rtimes\R$\,, \ action
$t\circ(y,k)=(y,k+\varphi_0(ty))\quad(t,y\in \R\,,\,k\in\Cal K$\,).
\\[2mm]
$M(2)_{\Cal K}=\R^2\rtimes\Cal K$ \ with the additional assumption that there
exists a proper closed subgroup $\Cal K_0$ of  $\Cal K$ such that
$\varphi_0(2\pi\Z)=\Cal K_0\cap\varphi_0(\R)$\,. Then
$\Cal K/\Cal K_0\cong\R/2\pi\Z$ and the action of $\R$ on $\R^2$ (defining
$\widetilde{M(2)}$\,) induces an action of $\Cal K$ on $\R^2$.
\\[2mm]
Special cases: \
For $\Cal K=\R/\Z$ \,($\varphi_0$ the quotient mapping) \;$\He_{\Cal K}$ is
the {\it reduced Heisenberg group}. \
For $\Cal K=\R/2\pi\Z$ \,($\Cal K_0=\{0\}$) \;$M(2)_{\Cal K}$ equals $M(2)$
the {\it Euclidian motion group of the plane}.
\begin{Pro} \label{minipro}
Every non--abelian connected locally compact group $G$ has a closed subgroup
isomorphic to one of the groups in \eqref{minsc}$\,\cup\,$\eqref{minco}.
\end{Pro}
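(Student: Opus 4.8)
The plan is to start from Lemma~\ref{minile}, which supplies a group $H$ from \eqref{minsc} together with a continuous homomorphism $\varphi\!:H\to G$ whose kernel $N$ is discrete. Since $H$ is connected, $N$ is central, so $\varphi$ factors through an \emph{injective} continuous homomorphism $H/N\to G$. Put $L=\overline{\varphi(H)}$\,; as the closure of a connected subgroup of the locally compact group $G$ it is a closed, connected, locally compact subgroup of $G$ in which $\varphi(H)\cong H/N$ sits densely. It then suffices to identify $L$, and the whole proposition reduces to showing that \emph{a connected locally compact group carrying a dense copy of some $H/N$ (with $H$ from \eqref{minsc}) is isomorphic to one of the groups in \eqref{minsc}$\,\cup\,$\eqref{minco}}. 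I would run this as a case analysis over the five groups $H$, feeding the structure of $L$ through the results on connected dense subgroups of Appendix~\ref{dense}.

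The compact and the ``rigid'' non--abelian cases are disposed of first. For $H=\SU(2)$ the image $\varphi(H)$ is already compact, hence closed and equal to $L$; the only discrete central $N$ are $\{e\}$ and the two--element center, giving $L\cong\SU(2)$ or $L\cong\SO(3)$. For the $ax+b$ group and for the Gr\'elaud groups $\G_a$ the center is trivial, so $N=\{e\}$ and $\varphi$ is injective; here the acting $\R$ operates expansively and without periodicity (through $e^t$, resp.\ $e^{at}A_t$), so its one--parameter image admits no proper compactification and, as one checks via Appendix~\ref{dense}, $\varphi(H)$ is already closed. Thus $L\cong H$ and no new groups appear.

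The substantial cases are $H=\He$ and $H=\widetilde{M(2)}$, which are exactly the ones producing the families in \eqref{minco}. In both, the decisive ingredient is the closure of a single one--parameter subgroup --- the center $\{(0,z,0)\}\cong\R$ of $\He$, respectively the rotation axis $\{(0,t)\}\cong\R$ of $\widetilde{M(2)}$\,: by Appendix~\ref{dense} the closure of a one--parameter subgroup in a connected group is either $\R$ or a compact solenoidal group $\Cal K$. Combining this with the possible discrete kernels ($N\subseteq$ center, so $N=\{e\}$ or $N\cong\Z$) I would read off $L$. For $\He$ the central direction may compactify to a solenoid while the remaining $\R$--direction is unchanged, yielding $L\cong\He$ or $L\cong\He_{\Cal K}$. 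For $\widetilde{M(2)}$ the rotation axis compactifies to $\Cal K$ while the normal $\R^2$ is preserved (rotations act by isometries, leaving no room for contraction), yielding $L\cong\widetilde{M(2)}$ or $L\cong M(2)_{\Cal K}$\,; the required subgroup $\Cal K_0$ and the isomorphism $\Cal K/\Cal K_0\cong\R/2\pi\Z$ come from the $2\pi$--periodicity of $t\mapsto A_t$, which forces $\varphi_0(2\pi\Z)=\Cal K_0\cap\varphi_0(\R)$.

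The main obstacle is controlling how the \emph{normal abelian} part closes up, i.e.\ proving that in $L$ only the intended direction becomes solenoidal: for $\He$ that $\overline{\varphi(\R^2)}\cong\R\times\Cal K$ (the non--central $\R$--direction cannot compactify), and for $\widetilde{M(2)}$ that $\varphi(\R^2)$ stays closed and isomorphic to $\R^2$; dually, for the $ax+b$ group and $\G_a$ one must genuinely exclude proper connected dense overgroups. These are precisely the statements I expect to extract from the analysis of connected dense subgroups in Appendix~\ref{dense}, and assembling them into the semidirect--product description of $L$ is the technical heart of the argument.
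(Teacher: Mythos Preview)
Your proposal is correct and follows essentially the same route as the paper: start from Lemma~\ref{minile}, replace $G$ by $\overline{\varphi(H)}$, and run a case analysis over the five groups of \eqref{minsc}, using the results of Appendix~\ref{dense} to determine the closure. The paper makes the criterion from Theorem~A a bit more explicit---it identifies the vector group $V$ by checking which elements of $H$ have relatively compact image in $\Aut(H)$ under $\iota_H$, and then applies Proposition~\ref{densepr}(b) to read off the semidirect-product form of $\overline{\varphi(H)}$---but your sketch singles out the same one-parameter subgroups (trivial for $ax+b$ and $\G_a$, the center of $\He$, the rotation axis of $\widetilde{M(2)}$) and reaches the same conclusions.
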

\begin{proof} By Lemma\,\ref{minile} there exists a group $H$ from
\eqref{minsc} and a continuous homomorphism $\varphi\!:H\to G$ with discrete
(hence central) kernel. Replacing $G$ by $\overline{\varphi(H)}$ we want to
show that $G$ is isomorphic to one of the groups in
\eqref{minsc}$\,\cup\,$\eqref{minco}.\\
For $H=\SU(2)$, $\varphi(H)$ is closed, hence $G\cong H/\ker\varphi$
(\cite{HR}\,Th.\,5.29 and Th.\,5.27). Since $Z(\SU(2))=\{\pm I\}$ (centre),
it follows
that $G\cong\SU(2)$ or $G\cong\SU(2)/\{\pm I\}$. It is classical (see e.g.
\cite{HM}\,Ex.\,E1.2(ii)\,) that $\SU(2)/\{\pm I\}\cong\SO(3)$.
\\
For the other groups in \eqref{minsc} we use the results and constructions
from Appendix\,\ref{dense}. For the $ax+b$--group and the Gr\'elaud groups
one can check that the centre is trivial, hence $\varphi$ must be injective.
A necessary condition for $x\in H$ to generate a relatively compact subgroup
in $\Aut(H)$ is relative compactness of the orbit $\{x^nyx^{-n}:n\in\Z\}$
for every $y\in H$\,. Again one can check that this does  not happen for
$x\neq e$\,. In the notation of Theorem\,\ref{dense} it follows that the
vector group $V$ is trivial, hence $\varphi(H)$ is always closed and
as above it follows that $\varphi(H)\cong H$ \,(i.e. these are examples
of ``absolutely closed'' groups, \cite{G2}\,p.\,728).
\\
For $H=\He$ the Heisenberg group an element generates a relatively compact
subgroup in $\Aut(H)$ iff it belongs to the centre $Z(H)=\{(0,z):z\in\R\}$
(notation as in \eqref{minsc}). Hence the only non--trivial vector group as in
Theorem\,\ref{dense} is $V=Z(H)$\,. Then $\ker\varphi$ is trivial and
the action $\alpha$ is trivial on $\varphi(V)$\,. Thus the action $\alpha_0$
in Proposition\,\ref{densepr} (one has $H_1=H$) is trivial on
$\Cal K=\overline{\varphi(V)}$\,.
By Proposition\,\ref{densepr}(b) it follows that $G\cong (H\times\Cal K)/D$\,.
It is easy to see that the homomorphism
$((y,z)t,k)\mapsto(y,\varphi_0(z)+k)\,t$
\,(mapping $H\times\Cal K$ onto $\He_{\Cal K}$) induces an isomorphism of
$(H\times\Cal K)/D$ and $\He_{\Cal K}$\,. These gives the cases where
$\ker\varphi_0=\ker\varphi$ is trivial. But the construction of
Proposition\,\ref{densepr} works more generally for $\Cal K$ as in
\eqref{minco} with $\ker\varphi_0$
discrete (see also Remark\,(c) in Appendix\,\ref{dense}). In
Theorem\,\ref{dense} this adds the cases where $V$ is trivial. Then
$\varphi$ is surjective, hence as above $G\cong H/\ker\varphi$\,. When
$\ker\varphi_0$ is a non--trivial subgroup of $Z(H)\cong\R$, $\varphi_0$ is
also surjective, thus
$\Cal K\cong\R/\ker\varphi_0$\,. It gives the reduced Heisenberg group
and some variants that are isomorphic to it.
\\[3mm plus 1mm]
For $H=\widetilde{M(2)}=\R^2\rtimes\R$ the universal covering of the
Euclidian motion group of the plane, $h\in H$ generates a relatively compact
subgroup in $\Aut(H)$ iff $h\notin\R^2$ and then $h$ is conjugate to an
element of $\R$\,. Hence, in Theorem\,\ref{dense}, if $V$ is non--trivial
we may assume that $V=\R$ (the second factor in the semidirect product).
The centre $Z(H)$ is $2\pi\Z$ (subgroup of $V$), thus $\ker\varphi$ is
trivial. Put $\Cal K=\overline{\varphi(V)},\
\Cal K_0=\overline{\varphi(2\pi\Z)},\ \varphi_0=\varphi\vert V$.
By Proposition\,\ref{densepr}(b) (splitting case, $H_2=\R^2$),
$G\cong \R^2\rtimes\Cal K$\,.
The action $\alpha$ is trivial on $\Cal K_0$ and non--trivial on
$\varphi(\R\setminus2\pi\Z)$ \,(using that $\alpha\circ\varphi=\iota_H$).
It follows that $\varphi_0(2\pi\Z)=\Cal K_0\cap\varphi_0(\R)$ and one can
check that the action used in Proposition\,\ref{densepr}(b) (restricting
$\alpha_0$ to $\R^2$) coincides with that described in \eqref{minco}.
Again there are the additional cases where $V$ is trivial. Then
$\varphi$ is surjective,
$G\cong H/\ker\varphi\cong\R^2\rtimes(\R/\ker\varphi)$\,.
$\ker\varphi$ has to be a subgroup of $2\pi\Z$\,. Thus
$\Cal K\cong \R/2k\pi\Z\ \ k=0,1,\dots$\,, giving $M(2)\ \;(k=1)$ and its
finite covering groups $(k>0)$, and of course $\widetilde{M(2)}\ \;(k=0)$.
Conversely, if $\Cal K$ is a compact group as in \eqref{minco} it is easy
to see that $\varphi_0$ extends to a homomorphism
$\varphi\!:\widetilde{M(2)}\to M(2)_{\Cal K}$ with dense image and discrete
kernel.\vspace{-2mm plus 3mm}
\end{proof}
\begin{Rems}
(a) For an alternative argument, one can compute the adjoint action of
the groups. It turns out that all the groups $H$ in \eqref{minsc} are
$CA$--groups, i.e. $\Int(H)$ is closed in $\Aut(H)$. By \cite{G2}\,4.1,
if the connected Lie group $H$ is a $CA$ group, $\varphi\!:H\to G$ an
injective (continuous) homomorphism with dense image, $G$ a Hausdorff
topological group, then
$G=\varphi(H)\,\overline{\varphi(Z)}$ \,($Z=Z(H)$ denoting the centre of $H$).
This holds more generally when $H$ is a connected locally compact group
and a $CA$ group, $\ker\varphi$ totally disconnected
(compare \cite{PW}\,Cor.\,4.2). It follows that $G$ is a central extension
of $H/Z$\,.\vspace{1mm}
\item[(b)] As explained in Appendix\,\ref{dense},\,Rem.\,(c), for a
(necessarily abelian)
compact group $\Cal K$ as in \eqref{minco}, the dual group $\widehat{\Cal K}$
is isomorphic to a subgroup of $\R$\,. For the Heisenberg group $\He$ any
non--zero subgroup $\Gamma$ of $\R$ defines in this way a group $\He_{\Cal K}$
(the trivial subgroup would give $\R^2\cong\He/Z(\He)$, this is a case
where $\ker\varphi$ is\linebreak non--discrete, as mentioned in
Appendix\,\ref{dense},\,Rem.\,(c); it follows that there is no minimal
extension $\He_{\Cal K}$). One can show that
two subgroups $\Gamma,\Gamma'$ define isomorphic groups
$\He_{\Cal K},\He_{\Cal K'}$ iff $\Gamma'=t\Gamma$ for some $t>0$.
\\
For $H=\widetilde{M(2)}$ a restriction comes through the adjoint action.
Put $\chi_t(x)=e^{itx}$ and use $\chi_t\leftrightarrow t$ for the
identification of $\widehat\R$ and $\R$\,. One computes that (in the
notation of Appendix\,\ref{dense},\,Rem.\,(c))
\;$\widehat\iota_0(\Gamma_0)=\Z$\,, hence $\widehat\varphi_0(\widehat{\Cal K})$
must be a subgroup of $\R$ containing $\Z$ \,and this is also equivalent
to the condition $\varphi_0(2\pi\Z)=\Cal K_0\cap\varphi_0(\R)$ of
\eqref{minsc} \,(and then
$\widehat{\Cal K_0}\cong\widehat\varphi_0(\widehat{\Cal K})/\Z $).
Different groups $\Gamma,\Gamma'\subseteq\R$ (containing $\Z$) give
non--isomorphic groups $M(2)_{\Cal K}\,,M(2)_{\Cal K'}$\,.
The minimal case is $\Gamma=\Z$ with
$\Cal K=\R/2\pi\Z\,,\ M(2)_{\Cal K}=M(2)$\,. The maximal case is $\Gamma=\R$
(discrete topology) with $\Cal K=b(\R)$ (Bohr compactification), this
is also mentioned in \cite{LLSS}\,Ex.\,3.4 (denoted as $\R^{ap}$).
$M(2)_{\Cal K}$ is a Lie group iff $\Gamma$ is finitely generated and it
is second countable iff $\Gamma$ is countable.
\end{Rems}

\section{The groups $\SO(3)$ and $\SU(2)$}\vspace{1mm} \label{comp}
For $G=\SU(2)$ Johnson's method has been elaborated in \cite{CG}\,sec.\,3.
We add here the aspect of (generalized) convolution operators that
will be useful in the following sections.
\\
The Lie algebra $\fr g$ of $G$ consists of the complex skew-hermitian
$2\times2$ matrices with trace $0$\,. For $X\in\fr g$ we consider
the distribution
$d=\lim_{t\to0}\frac1t(\delta_{\exp(-tX)}-\delta_I)$ (directional derivative
for the tangent vector $-X$\,; $\delta_x$ point measure at $x$\,, $I$
identity matrix). We put $D(f)=f\star d$\,. Classically, $d$ and $D$ are
defined for $C^1$-functions, $d$ gives a point derivation at the unit $I$
of $G$\,, $D$ defines a left invariant derivation on $C^\infty(G)$ (i.e.
$D(\delta_x\star f)=\delta_x\star D(f)$ for all $x\in G$). The example
of \cite{CG} (related to that of Johnson \cite{J} for $\SO(3)$) is
obtained for $X_0=\left(\begin{smallmatrix}
\,\frac i2 & \;0 \\[.5mm]
\,0 & -\frac i2 
\end{smallmatrix}\right)$.
\\
We take normalized Haar measure on $G$\,, then the Plancherel measure on the
discrete space $\widehat G$ gives the weights
$\nu(\{\pi\})=\dim(\pi)$\,. Thus
for $f\in A(G)$ ($\subseteq L^1(G)$ in the compact case)
$\lVert f\rVert_A=
\sum_{\pi\in\widehat G}\dim(\pi)\lVert\pi(f)\rVert_1$
and $\lVert f\rVert_{\VN}=\sup_{\pi\in\widehat G}\lVert\pi(f)\rVert$
for $f\in L^1(G)$. If $f$ is a $C^1$-function such that
$f,f\star d\in L^1(G)$ it is easy to see that
$\pi(f\star d)=\pi(f)\circ\pi(d)$ (using the extension of $\pi$ to compactly
supported distributions). For $X_0$ as above, $\pi(d)=-F_\pi$ (for the
standard realization of $\pi$) as given in \cite{CG}\,p.\,6508. As explained
there, one has $\lVert F_\pi\rVert=\frac{\dim(\pi)-1}2$.
It follows that $\lVert f\star d\rVert_{\VN}\le\frac12\lVert f\rVert_A$\,.
By \cite{E}\,Prop.\,3.26 $A(G)$ contains the space $\Cal D(G)$ of
compactly supported $C^\infty$-functions as a dense subspace (for any
Lie group $G$). It follows that $D$ extends to a bounded operator
$A(G)\to\VN(G)$ which is still a derivation.
\\
This construction of derivations on $C^\infty(G)$ can be applied on any
Lie group $G$\,. For a compact semisimple Lie group, similar arguments
as above show that this gives derivations $A(G)\to\VN(G)$ for any
$X\in\fr g$\,. But the same estimate applies also to the corresponding
right invariant derivations $D^{(r)}f=d\star f$\,. Combined the method
produces a subspace of derivations $A(G)\to\VN(G)$ with the dimension
$2\dim(G)$ \,(for any compact semisimple Lie group, with the extended theory
of \cite{HM}\,Th.\,9.49 this extends to some extent
even to general compact connected semisimple groups).
In any case the guess made in \cite{LLSS}\,Rem.\,2.4 needs some revision.
\vspace{2mm plus 2mm}

\section{The \,$ax+b$ \,group}\vspace{1mm} \label{axb}
$G=\R\rtimes\R$\,, the (proper) affine transformations of the real line.
For clarity, we use the notation $G=\beta(\R)\,\alpha(\R)$, i.e.
$(x,t)\mapsto\beta(x)\,\alpha(t)$ defines a homeomorphism $\R^2\to G\,,\
\,\alpha,\beta:\R\to G$ are homomorphisms and
group multiplication is governed by the relation
$\alpha(t)\,\beta(x)=\beta(e^tx)\,\alpha(t)$ \,($t,x\in\R$;
hence $\beta(\R)$ is the normal factor). For (left) Haar measure we take
$e^{-t}dx\,dt$\,. $G$ is not unimodular, $\Delta(\beta(x)\,\alpha(t))=e^{-t}$.
Irreducible representations are described in numerous texts (and with
numerous variations), starting with Gelfand, Naimark. We keep close to
\cite{CG}\,sec.\,4. Take $\Cal H=L^2(\Rp,\frac1\psi d\psi)$ (with
$\Rp=[0,\infty[$) and
for ${f\!\in\!\Cal H\,,}\ (\pi^+(\beta(x))f)(\psi)=e^{-ib\psi}f(\psi),\
(\pi^-(\beta(x))f)(\psi)=e^{ib\psi}f(\psi),\
(\pi^\pm(\alpha(t))f)(\psi)=f(e^t\psi),\ (Kf)(\psi)=\psi f(\psi)\
\;(t,x\in\R\,,\,\psi\in\Rp)$.
$\pi^+,\pi^-$ define non-equivalent representations and up to equivalence
these are the only infinite dimensional
irreducible representations of~$G$\,. $K\ (=K_+,K_-)$ is a Duflo--Moore
operator for $\pi^\pm$, the Plancherel measure is concentrated on
$\{\pi^+,\pi^-\}$ with $\nu(\{\pi^+\})=\nu(\{\pi^-\})=\frac1{2\pi}$\,.
Thus $\Cal F$ maps $\VN(G)$ isomorphically to the $l^\infty$-sum
$\Cal B(\Cal H)\oplus\Cal B(\Cal H)$. $\Cal F_A(f)=([\pi^\pm(f)K]),\
\lVert f\rVert_A=
\frac1{2\pi}(\lVert[\pi^+(f)K]\rVert_1+\lVert[\pi^-(f)K]\rVert_1)$ for
$\;f\in A(G)\cap L^1(G)$. $\Cal F_A$ extends to an isometry of $A(G)$ onto
the $l^1$-sum $\Cal S^1(\Cal H)\oplus\Cal S^1(\Cal H)$ weighted by~$\nu$\,.
\\[2mm plus 2mm]
We consider the distribution
$d=\lim_{x\to0}\frac1x(\delta_{\beta(-x)}-\delta_{\beta(0)})$ and
$D(f)=f\star d$  \;(taking
a direction from the normal factor; up to a constant factor this is
the derivation $D_\flat$ considered in \cite{CG}\,p.\,6510). It follows that
$(\pi^+(d)g)(\psi)=i\psi g(\psi),\ (\pi^-(d)g)(\psi)=-i\psi g(\psi)$ hence
$\pi^\pm(d)=\pm iK$\,. If $f$ is a $C^1$-function on $G$ such that
$f,f\star d\in L^1(G)$ one has $\pi(f\star d)=[\pi(f)\circ\pi(d)]$.
Hence, if in addition $f\in A(G)$ holds, it follows that
$\lVert f\star d\rVert_{\VN}\le2\pi\lVert f\rVert_A$\,,
$D$ extends to a bounded derivation $A(G)\to\VN(G)$\,.
\\[1mm]
If $d$ involves a direction from the non-normal factor one can show that
this does not give a bounded mapping $A(G)\to\VN(G)$\,, the same when
considering right invariant derivations $D^{(r)}f=d\star f$\,. Hence for
the $ax+b$ \,group the method produces just a one dimensional space of
derivations $A(G)\to\VN(G)$.
\vspace{1.2mm plus 2mm}
\section{The Heisenberg group $\He$}\vspace{1mm} \label{heis}
We use the parametrization given in Section\,\ref{mini},
$\He=\R^2\rtimes\R$\,, considering $\R,\R^2$ as subgroups (i.e. omitting
embeddings $\alpha,\beta$ as in Section\,\ref{axb}) elements are written
as $v\,x$ with $x\in\R\,,\;v\in\R^2$, with multiplication rule
$x\,v=(x\circ v)\,x$ where $x\circ v=(y,z+xy)$ for $v=(y,z)$\,. For Haar
measure we take standard Lebesgue measure of $\R^3$. $\He$ is unimodular.
The representation
theory is again classical. Take $\Cal H=L^2(\R)$. For real $t\neq0$ an
irreducible representation $\pi_t$ is defined for $f\in\Cal H$ by
$(\pi_t(v)f)(\psi)=e^{it(z-y\psi)}f(\psi)\,,\ (\pi_t(x)f)(\psi)=f(\psi-x)\
\;(x,\psi\in\R\,,\;v=(y,z)\in\R^2)$\,. The representations $\pi_t$ are
pairwise non-equivalent and exhaust (up to equivalence) all the infinite
dimensional irreducible representations of $\He$\,. The image of
$\{\pi_t:t\neq0\}$ (with the topology inherited from $\R\setminus\{0\}$)
defines a locally compact (but non-closed) subset of $\widehat\He$\,.
The Plancherel measure is concentrated on this subset and with the given
parametrization of the representations we have
$d\nu=\frac1{4\pi^2}\lvert t\rvert dt$\,. As explained in
Appendix\,\ref{AppenB}, it follows that
$\Cal B_1^\oplus(\widehat\He,\nu)=L^1(\widehat\He,\nu,\Cal S_1(\Cal H))\cong
L^1(\R,\frac1{4\pi^2}\lvert t\rvert dt,\Cal S_1(\Cal H))$
and
$\lVert f\rVert_A=
\frac1{4\pi^2}\int_\R\lVert\pi_t(f)\rVert_1\lvert t\rvert\,dt$
for $f\in L^1(\He)\cap A(\He)$\,. General elements of
$\Cal B_1^\oplus(\widehat\He,\nu)$ will be denoted as $T=(T_t)_{t\in\R}$\,.
\\[0mm plus 2mm]
We review now the approach of \cite{CG2} for $\He$\,. $Z=\{(0,z):z\in\R\}$
is the centre of $\He$ and we consider the distribution
$d=\lim_{z\to0}\frac1z(\delta_{(0,-z)}-\delta_{(0,0)})$ and
$D(f)=f\star d$ \;(up to a constant factor this is
the derivation $\partial_Z$ considered in \cite{CG2}\,(4.1)\,).
It follows that $(\pi_t(d)g)(\psi)=-itg(\psi)$ (for $g\in\Cal H$) and
it is easy to see that now $D$ does {\it not} produce a bounded mapping
$A(\He)\to\VN(\He)$\,. Using
distributional derivatives we have a linear mapping
$D\!:A(\He)\to\Cal D(\He)'$. The idea is to
construct a symmetric Banach $A(G)$--bimodule containing the range
of $D$ and making $D$ bounded. The module operation should extend pointwise
multiplication. The obstacle is that the product with functions is not
defined for general distributions and we should show that the distributions
in the image of $D$ are sufficiently tame to allow multiplication with
functions from $A(\He)$. Technically, this becomes practicable when working
on the dual side.
\\[0mm plus 1mm]
Put $\Cal W=L^1(\R,dt,\Cal K(\Cal H))$ and for
$T=(T_t)\in\Cal B_1^\oplus(\widehat\He,\nu)$ we put
$\widehat D(T)=(-itT_t)$. Obviously,
$\widehat D\!:\Cal B_1^\oplus(\widehat\He,\nu)\to\Cal W$ and
$\lVert\widehat D(T)\rVert_{\Cal W}=
\int_R\lvert t\rvert\lVert T_t(f)\rVert_1dt=
2\pi\lVert T\rVert_{\Cal B_1^\oplus}$\,.
For a $C^1$-function $f\in A(\He)$ with $D(f)\in L^1(\He)$ we have
(using the extension of $\Cal F$ to
$A(\He)$) \;$\Cal F(D(f))=\widehat D(\Cal F(f))$\,.
$\Cal W\cap\Cal B_1^\oplus$ is dense in $\Cal W$\,. The main
observation of \cite{CG2} is now that dual convolution $\sharp$ can
be extended to make $\Cal W$ a (Banach) $\Cal B_1^\oplus$-module
(hence also an $A(\He)$-module). $D$ defines certainly a derivation on
the space of $C^1$-functions, thus by density and continuity
$\widehat D\circ\Cal F\!:A(\He)\to\Cal W$ becomes a derivation.
\\[0mm plus 1mm]
Products of representation coefficients are coefficients of the tensor product
of the representations
$(\pi_1(x)f_1\vert g_1)\,(\pi_2(x)f_2\vert g_2)=
((\pi_1\otimes\pi_2)(x)(f_1\otimes f_2)\vert\,g_1\otimes g_2)$\,.
Formulas for $\sharp$ are obtained by considering decompositions of
$\pi_1\otimes\pi_2$ into irreducible representations. For the Heisenberg
group $\He$ this was done in \cite{CG2}\,sec.\,5 (their notation is
slightly different, defining $\sharp$ as a product on
$L^1(\widehat\He,dt,\Cal S_1(\Cal H))$ which is isomorphic to
$\Cal B_1^\oplus(\widehat\He,\nu)$ by the mapping
$(\Cal T F)_t=\lvert t\rvert^{-1}F_t$ \;--- this simplifies some formulas).
More abstractly, $\pi_t$ can be obtained as an induced representation
from the subgroup $\R^2$ and then one can apply e.g. \cite{KL}\,Th.\,1.4.11
(a special case of Mackey's tensor product theorem). This gives
$\pi_{t_1}\otimes\pi_{t_2}\simeq\infty\,\pi_{t_1+t_2}$\,. But we need an
explicit description of the equivalence.
\\[1mm plus 2mm]
$\Cal H\otimes_2\Cal H$ denotes the (completed)
Hilbert space tensor product and we use the identification with
$L^2(\R^2)$ \,(be aware that \cite{F}\,p.\,9f.
uses a different notion of tensor product which is quite non-standard,
compare \cite{Fo}\,7.3).
For $t_1+t_2\neq0$, we consider a unimodular coordinate transformations giving rise to a unitary
operator
\;$(W_{t_1,t_2}f)(\psi_1,\psi_2)=
f(\psi_1-\frac{t_2}{t_1+t_2}\psi_2,\,
\psi_1+\frac{t_1}{t_1+t_2}\psi_2)$
for $f\in L^2(\R^2)\,$. Then
$W_{t_1,t_2}\circ((\pi_{t_1}\otimes\pi_{t_2})(h))=
(\pi_{t_1+t_2}(h)\otimes1)\circ W_{t_1,t_2}$ for $h\in\He$ \,($1$ denoting
the identity operator) and the mapping $(t_1,t_2)\mapsto W_{t_1,t_2}$ is
continuous for the strong operator topology.
Let $1\otimes\tr:\Cal S^1(\Cal H\otimes_2\Cal H)\to\Cal S^1(\Cal H)$ be the
dual operator for the embedding
$A\mapsto A\otimes 1\ \:(\,\Cal B(\Cal H)\to\Cal B(\Cal H\otimes_2\Cal H)\,)$,
\;i.e. $\tr(A\circ(1\otimes\tr)(B)\,)=\tr((A\otimes 1)\circ B)$ for
$A\in\Cal B(\Cal H),\;B\in\Cal B(\Cal H\otimes_2\Cal H)$. For
$A_1,A_2\in\Cal S^1(\Cal H)$ put
\;$\theta(A_1,A_2,t_1,t_2)=\linebreak
(1\otimes\tr)(W_{t_1,t_2}(A_1\otimes A_2)W_{t_1,t_2}^*)\
(\in\Cal S^1(\Cal H)\,)$\,.
This describes the products of coefficients of $\pi_{t_1},\pi_{t_2}\,$,
i.e. $\tr(A_1\pi_{t_1}(h)^*)\mspace{5mu}\tr(A_2\pi_{t_2}(h)^*)=
\tr(\,\theta(A_1,A_2,t_1,t_2)\:\pi_{t_1+t_2}(h)^*\,)$ for\linebreak
$h\in\He$\,. $\theta$ is continuous in all variables (for $\lVert\ \rVert_1$
and $t_1+t_2\neq0$).
It follows that (in our notation) \,$\lvert t\rvert\,(S\sharp T)_t=
\frac1{2\pi}\int_\R
\theta(\lvert s\rvert S_s,\lvert t-s\rvert T_{t-s},s,t-s)\,ds$ for
$S,T\in\Cal B_1^\oplus(\widehat\He,\nu)$\,.
\\[4mm plus 2mm]
Recall that $\Cal F(uv)=\Cal F(u)\,\sharp\,\Cal F(v)$ for $u,v\in A(\He)$ and
that $\Cal F$ extends to an isometric isomorphism
$\VN(\He)\to L^\infty(\R,\nu,\Cal B(\Cal H))$\,. This provides estimates
for the operator norm
$\lVert S\sharp T\rVert\le\lVert S\rVert_1\lVert T\rVert$ for
$S\in\Cal B_1^\oplus,\ T\in\Cal B_1^\oplus\cap L^\infty\
(=\Cal F(A(\He)\cap\VN(\He))\,)$\,. Considering this for
$S_t=f(t)A_1\,,\:T_t=g(t)A_2$ with continuous $f,g$ it follows by standard
arguments that
$\lVert\theta(A_1,A_2,t_1,t_2)\rVert\le
\frac{\lvert t_2\rvert}{\lvert t_1+t_2\rvert}
\lVert A_1\rVert_1\lVert\, A_2\rVert$ for
$A_1,A_2\in\Cal S^1(\Cal H),\;t_1+t_2\neq0$ \,(it does not look easy to prove
this more directly, alternatively one can also use arguments as in the proof
of \cite{CG2}\,Th.\,4.3 on p.\,2458). This easily implies
$\lVert S\sharp T\rVert_{\Cal W}\le\lVert S\rVert_1\lVert T\rVert_{\Cal W}$
for $S\in\Cal B_1^\oplus,\ T\in\Cal W\cap\Cal B_1^\oplus$ and this allows to
extend~$\sharp$\,, making $\Cal W$ a $\Cal B_1^\oplus$-module.
%
\section{The groups $\He_{\Cal K}$}\vspace{1mm} \label{heisK}
The reduced Heisenberg group has been treated in \cite{CG}\,sec.\,6 and
for general groups $\He_{\Cal K}$ the method works similarly. We take
up the notations of Section\,\ref{mini},
$\He_{\Cal K}=(\R\times\Cal K)\rtimes\R$\,, $\Cal K$ is a non-trivial
compact group and
we have given a continuous homomorphism
$\varphi_0\!\!:\R\to \Cal K$ with dense image, $\Cal K$ is written additively.
The action of $\R$ on $\R\times\Cal K$ is defined by
$x\circ(y,k)=(y,k+\varphi_0(xy))\quad(x,y\in \R\,,\,k\in\Cal K$\,).
Then $\varphi(y,z)=(y,\varphi_0(z))$  extends to a continuous homomorphism
$\varphi\!:\He\to\He_{\Cal K}$ with dense image.
$j_{\Cal K}(k)=(0,k)\ (j_{\Cal K}\!:\Cal K\to\He_{\Cal K})$ maps $\Cal K$
isomorphically onto the centre of $\He_{\Cal K}$\,, similarly with
$j_Z(z)=(0,z)\ (j_Z\!:\R\to\He)$\,,  and we have
$\varphi\circ j_Z=j_{\Cal K}\circ\varphi_0$\,.
For Haar
measure we take the product of standard Lebesgue measure on $\R^2$
and normalized Haar measure on $\Cal K$\,. $\He_{\Cal K}$ is unimodular.
The dual group
$\Gamma=\widehat{\Cal K}$ is considered as a subgroup of $\R$ in such a
way that for $t\in\Gamma$ the corresponding character $\chi_t$ of $\Cal K$
satisfies $\chi_t(\varphi_0(z))=e^{itz}$ for $z\in\R$ \,(see Rem.\,(b) in
Section\,\ref{mini} and Rem.\,(c) in Appendix\,\ref{dense}).
\\[0mm plus 1mm]
If $\pi$ is an irreducible representation of $\He_{\Cal K}$ then
$\pi\circ\varphi$ must be irreducible. Hence if $\pi$ is infinite
dimensional it follows that $\pi\circ\varphi\simeq\pi_t$ for some $t\neq0$\,.
Conversely, looking at the definition of $\pi_t$ in Sec.\,\ref{heis},
it is easy to see that $\pi_t$ can be written as $\pi\circ\varphi$ iff
the mapping \,$\varphi_0(z)\mapsto e^{itz}$ is well defined on $\varphi_0(\R)$
and extends continuously to $\Cal K$ \,(then defining a character of
$\Cal K$). By our identification this is equivalent to
$t\in\Gamma\setminus\{0\}$\,. The corresponding representation induced
on $\He_{\Cal K}$ will be denoted again by $\pi_t$\,. The finite dimensional
irreducible representations of $\He$ are trivial on $Z$ and one dimensional.
Hence they
are given by characters on $\He/Z\cong\R^2$ (projecting $(y,z)x\in\He$ to
$(x,y)\in\R^2$) and every character of $\R^2$ defines also a one dimensional
representation of $\He_{\Cal K}$\,. As above we identify $(\R^2)\sphat$ \;with
$\R^2$ associating to $w\in\R^2$ the character
$\chi_w(v)=e^{i(v\vert w)}\ (v\in\R^2)$. The corresponding representation of
$\He_{\Cal K}$ (and its extension to $L^1$ and distributions) will also
be denoted by $\chi_w$\,. With these notations and parametrizations
we get $(\He_{\Cal K})\sphat=(\Gamma\setminus\{0\})\cup\R^2$.
It is not hard to see that both parts are open, on $\R^2$ the dual induces
the euclidian topology and on  $\Gamma\setminus\{0\}$ the discrete topology
(in particular $(\He_{\Cal K})\sphat$ \;is Hausdorff; $\He_{\Cal K}$ are
$FC^-$-groups, general results for the dual of $FC^-$-groups have been shown
in \cite{Li}).\vspace{0mm plus 1mm}
\begin{Lem}   \label{plheisK}
Using Haar measure and parametrization as above, the Plancherel measure
$\nu$ of $\He_{\Cal K}$ satisfies
$d\nu=\frac1{4\pi^2}dw$ on $\R^2$ and $\nu(\{t\})=\frac{\lvert t\rvert}{2\pi}$
for $t\in\Gamma\setminus\{0\}$.
\end{Lem}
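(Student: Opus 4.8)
The plan is to read off $\nu$ by verifying the abstract Plancherel identity
\[
  \lVert f\rVert_2^2=\int_{\widehat{\He_{\Cal K}}}\lVert\pi(f)\rVert_{\mathrm{HS}}^2\,d\nu(\pi)
\]
for $f$ in a dense class (say $\Cal D(\He_{\Cal K})$, or $f\in L^1\cap L^2$) and matching the two sides term by term. Since $\He_{\Cal K}$ is unimodular and type~I (a regular semidirect product, resp.\ an $FC^-$-group as noted in Section~\ref{heisK}), the Duflo--Moore operators are scalar and $\nu$ is the unique measure on the dual for which this identity holds (the framework of Appendix~\ref{AppenB}); thus any measure making both sides agree must be $\nu$.

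First I would Fourier-analyse in the central compact subgroup $j_{\Cal K}(\Cal K)$. Writing $g=(y,k)\,x$ and, for $t\in\Gamma=\widehat{\Cal K}$,
\[
  \widehat f(y,t,x)=\int_{\Cal K}f(y,k,x)\,\chi_t(k)\,dk ,
\]
Parseval on the compact abelian group $\Cal K$ (normalized Haar measure) gives
\[
  \lVert f\rVert_2^2=\sum_{t\in\Gamma}\int_{\R}\int_{\R}\lvert\widehat f(y,t,x)\rvert^2\,dy\,dx .
\]
The term $t=0$ will correspond to the continuous part of the dual and the terms $t\neq0$ to the atoms, matching exactly the decomposition $\widehat{\He_{\Cal K}}=(\Gamma\setminus\{0\})\cup\R^2$.

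For $t=0$ the component $\widehat f(\cdot,0,\cdot)$ is the average of $f$ over $\Cal K$, a function on $\He_{\Cal K}/j_{\Cal K}(\Cal K)\cong\R^2$, on which the one-dimensional representations $\chi_w$ act as the Euclidean characters; the classical Plancherel theorem on $\R^2$ with the convention $\chi_w(v)=e^{i(v\vert w)}$ supplies the density $\frac1{4\pi^2}\,dw$. For $t\neq0$ I would use the formulas of Section~\ref{heis}, which give $\pi_t((y,k)x)\xi(\psi)=e^{-ity\psi}\chi_t(k)\,\xi(\psi-x)$, to realise $\pi_t(f)$ as the integral operator on $L^2(\R)$ with kernel
\[
  K_t(\psi,\psi')=\int_{\R}\widehat f\bigl(y,t,\psi-\psi'\bigr)\,e^{-ity\psi}\,dy .
\]
Computing its Hilbert--Schmidt norm, the substitution $\psi'\mapsto x=\psi-\psi'$ turns the inner factor into the Euclidean Fourier transform of $y\mapsto\widehat f(y,t,x)$ evaluated at $t\psi$; the change of variable $\eta=t\psi$ then yields a factor $\frac1{\lvert t\rvert}$ and Plancherel in $y$ a factor $2\pi$, so that
\[
  \lVert\pi_t(f)\rVert_{\mathrm{HS}}^2=\frac{2\pi}{\lvert t\rvert}\int_{\R}\int_{\R}\lvert\widehat f(y,t,x)\rvert^2\,dy\,dx .
\]
Comparing this with the $t$-th summand of the Parseval expansion forces $\nu(\{t\})=\frac{\lvert t\rvert}{2\pi}$, as claimed.

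The conceptual steps are routine; the delicate part is purely the bookkeeping of normalisation constants --- the chosen Haar measures on $\He_{\Cal K}$, on $\R^2$ and on $\Cal K$, together with the $e^{i(\cdot)}$ Fourier conventions --- since these are precisely what fix $\frac1{4\pi^2}$ and $\frac1{2\pi}$. One also has to check that $\pi_t(f)$ is Hilbert--Schmidt and that the interchanges of integration are legitimate for $f$ in the dense class, but these present no real obstacle. (Alternatively one could invoke the Mackey--Plancherel theorem for the regular semidirect product $(\R\times\Cal K)\rtimes\R$, disintegrating Haar measure of $\widehat{\R\times\Cal K}$ over the $\R$-orbits, but the same constant-tracking would remain.)
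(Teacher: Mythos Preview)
Your proposal is correct and is precisely the direct Plancherel computation the paper points to (``similar computations as for the Heisenberg group, \cite{Fo}\,7.6.1''): Fourier-expand along the compact centre $\Cal K$, match the $t=0$ piece with Euclidean Plancherel on $\R^2$ and each $t\neq0$ piece with the Hilbert--Schmidt norm of $\pi_t(f)$, tracking the normalisations. Your bookkeeping of the constants (the $2\pi/\lvert t\rvert$ from the substitution $\eta=t\psi$ plus one-dimensional Plancherel, and $1/4\pi^2$ from two-dimensional Plancherel with the $e^{i(\cdot\vert\cdot)}$ convention) is accurate.
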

\begin{proof}
This follows by similar computations as for the Heisenberg group
(\cite{Fo}\,7.6.1).
\end{proof}\noindent
If $\Cal K$ is not metrizable (equivalently, if $\Gamma$ is uncountable)
this gives an example of the Plancherel
theorem for a group that is non-compact, $\sigma$-compact, but not second
countable -- of course
the Plancherel measure is not $\sigma$-finite in that case (see also
Rem.\,(d) in Appendix\,\ref{AppenB}). It follows that
$\Cal B_1^\oplus((\He_{\Cal K})\sphat\,,\nu)$ is the sum of (complex valued)
$L^1(\R^2,\frac1{4\pi^2}dw)$ and a ($\Cal S^1(\Cal H)$-valued) weighted
$l^1$-space.
$\lVert T\rVert_1=\frac1{4\pi^2}\int_{\R^2}\,\lvert T_w\rvert\,dw+
\frac1{2\pi}\sum_{t\in\Gamma\setminus\{0\}}
\lvert t\rvert\,\lVert T_t\rVert_1$\,.
\\
We project now the distribution from Section\,\ref{heis} to $\He_{\Cal K}$\,,
putting
$d=\lim_{z\to0}\linebreak
\frac1z(\delta_{\varphi(0,-z)}-\delta_{\varphi(0,0)})$
and $D(f)=f\star d$\,. Recall that $\varphi(0,z)=\varphi_0(z)$. If $\Cal K$
is a Lie group (equivalently, if $\Gamma$ is finitely generated), $\varphi_0$
is analytic (\cite{V}\,L.\,2.11.1 or more directly, using a duality argument)
and it follows that $d$ and $D$ are defined for\linebreak
$f\in C^1(\He_{\Cal K})$
($\He_{\Cal K}$ being again a Lie group). In the general case, $\Cal K$ is
the projective limit of Lie groups $\Cal K/K_1$ (e.g. taking all finitely
generated subgroups of $\Gamma$) and
$\He_{\Cal K/K_1}\cong\He_{\Cal K}/j_{\Cal K}(K_1)$.
With the usual embeddings (see also Rem.\,(d) in Appendix\,\ref{AppenB})
\,$d$ and $D$ are defined for
$f\in\bigcup\,\{C^1(\He_{\Cal K/K_1}):K_1\text{ a closed subgroup of }\Cal K
\text{ with}\linebreak\Cal K/K_1\text{ Lie\,}\}$. It is easy to see that
$\chi_w(d)=0$ for $w\in\R^2$ and (as in Section\,\ref{heis})
\,$(\pi_t(d)g)(\psi)=-itg(\psi)$
for $g\in\Cal H$ and $t\in\Gamma\setminus\{0\}$. Then
(similar to Section\,\ref{comp}) our formula for the Plancherel measure
(Lemma\,\ref{plheisK}) implies that
$\lVert f\star d\rVert_{\VN}\le2\pi\lVert f\rVert_A$ for
$f\in
A(\He_{\Cal K})\cap L^1(\He_{\Cal K})\cap\bigcup C^1(\He_{\Cal K/K_1})$ \,
with $f\star d\in L^1$ (e.g. $f$ of compact support).
It follows that $D$ extends to a derivation $A(G)\to\VN(G)$\,.
\begin{Rems}
(a) We recall from \cite{CG}\,2.1, for a Banach algebra $A$\,, a
derivation\linebreak $D\!:A\to A'$ is called cyclic if $(Da,b)=-(Db,a)$
holds for
$a,b\in A$ \;(\,$(\;,\,)$ denoting the evaluation of functionals).
Gr\o nb\ae k
called $A$ cyclically amenable if there are no non-zero bounded cyclic
derivations. For $\SO(3),\,\SU(2),\,ax+b,\,\He_{\Cal K}$ the derivations
that we considered are cyclic. By \cite{CG}\,Prop.\,2.5, for any locally
compact group $G$ that has one of these groups as a closed subgroup the
Fourier algebra $A(G)$ is not cyclically amenable. But the situation is
not clear for the groups $\He,\,\widetilde{M(2)},\,M(2)_{\Cal K},\,\G_a$\,.
\item[(b)] The distribution $d$ that we used for $\He_{\Cal K}$ is central,
thus $D^{(r)}=D\ \;(D$ is left and right invariant). For derivatives
involving a non-central direction one
does not get a bounded mapping $A(\He_{\Cal K})\to\VN(\He_{\Cal K})$\,.
Hence the present method produces just a one dimensional space of
derivations $A(\He_{\Cal K})\to\VN(\He_{\Cal K})$.
\end{Rems}
\vspace{.5mm plus 2mm}
\section{The Euclidian motion group $M(2)$}\vspace{1mm} \label{euclm}
$M(2)$ is not covered by \cite{CG},\cite{CG2}. In \cite{LLSS}\,sec.\,2.2
it was shown that\linebreak $A(M(2))$ is not weakly amenable, using
spectral synthesis.
We develop here our method in detail, since the constructions are basic
for the treatment of $M(2)_{\Cal K}$ in the next section. Defining
derivations as before, it turns out that they do not give bounded mappings
$A(M(2))\to\VN(M(2))$. Working on the dual side, we are constructing a
$\Cal B_1^\oplus$-module $\Cal W$ such that
$\widehat D\!:\Cal B_1^\oplus\to\Cal W$ is bounded. First we will derive a
formula for the dual convolution $\sharp$\,. For $M(2)$ it turns out that 
$\sharp$ behaves quite differently compared to the case of $\He$\,. We did
not succeed to find estimates making it possible to define $\Cal W$ similarly
as in Section\,\ref{heis} as a weighted $L^1$-space of $\Cal K(\Cal H)$-valued
(or $\Cal S^1(\Cal H)$-valued) functions. Recall that if $\Cal H=L^2(\Omega)$,
trace class operators are integral operators and for a finite measure
space this gives embeddings \,$\Cal S^1(\Cal H)\subseteq\Cal S^2(\Cal H)=
L^2(\Omega^2)\subseteq L^1(\Omega^2)$. In our case, for the infinite
dimensional representations we may choose $\Omega=\T=\R/(2\pi\Z)$ with
normalized Haar measure $\frac1{2\pi}d\psi$\,. We will define $\Cal W$
as a subspace of the weighted $L^1$-space
$\overline{\Cal W}= L^1(]0,\infty[,w(\rho)\,d\rho,L^1(\T^2))$ with
the weight function $w(\rho)=\dfrac\rho{1+\rho^2}$\,, leading (partially)
outside $\Cal B(\Cal H)$.
\\[0mm plus 1mm]
Similar to Section\,\ref{heis} we use the parametrization
$M(2)=\R^2\rtimes\T$\,. With the rotation matrices
$A_\alpha=\begin{pmatrix}\cos\alpha &-\sin\alpha\\
\sin\alpha & \phantom{-}\cos\alpha
\end{pmatrix}=\exp\bigl(\,\alpha\begin{pmatrix}0 &-1\\1 & 0 \end{pmatrix}\bigr)$
($\alpha\in\R$, but depending only on the coset$\mod2\pi$), the action is
defined by $\alpha\circ v=A_\alpha v$ ($v\in\R^2$). For Haar
measure we take $\dfrac1{(2\pi)^2}\,dv\,d\alpha$
\,(\cite{Su}\,Ch.\,IV,\,(3.1)). $M(2)$ is unimodular.
\\[1mm plus 1mm]
The representation
theory is again classical. Take $\Cal H=L^2(\R,\frac1{2\pi}d\psi)$.
For real $\rho>0$ an
irreducible representation $\pi_\rho$ is defined for $f\in\Cal H$ by
$(\pi_\rho(v)f)(\psi)=e^{i\rho\,r\cos(\phi-\psi)}f(\psi)\,,\
(\pi_\rho(\alpha)f)(\psi)=f(\psi-\alpha)\
\;(\alpha,\psi\in\T\,,\;v=r\,\binom{\cos\phi}{\sin\phi}\in\R^2$,
polar coordinates)\,. $M(2)$ is of type\;I\,. The representations
$\pi_\rho$ are
pairwise non-equivalent and exhaust (up to equivalence) all the infinite
dimensional irreducible representations of $M(2)$\,. For usage in the
next section, we discuss also the finite dimensional representations.
We have $M(2)/\R^2\cong\T$\,. For $n\in\Z\,,\ \chi_n(\psi)=e^{in\psi}\
(\psi\in\T)$ lifts to a one dimensional representation of $M(2)$ (again
denoted by $\chi_n$).
We have $\widehat{M(2)}=\{\chi_n:n\in\Z\}\cup\{\pi_\rho:\rho>0\}$ (more
precisely: this gives a complete set of representatives).
$\{\pi_\rho:\rho>0\}$ is an open subset and $\widehat{M(2)}$ induces the
standard topology of $]0,\infty[$\,. On $\{\chi_n:n\in\Z\}$ we get the
discrete topology, $\overline{\{\pi_\rho\}}=\{\pi_\rho\}\cup\{\chi_n:n\in\Z\}$
for each $\rho>0$\,.
The Plancherel measure is concentrated on $\{\pi_\rho:\rho>0\}$ and with the
given parametrization of the representations we have
$d\nu=\rho\,d\rho$ \,(\cite{Su}\,IV,\,Th.\,4.2, \cite{Vi}\,Ch.\,IV\,\S5 uses a
different normalization of Haar measure, resulting in a compensating factor in
the Plancherel measure). Hence
$\Cal B_1^\oplus(\widehat{M(2)},\nu)\cong
L^1(]0,\infty[,\rho\,d\rho,\Cal S_1(\Cal H))$, general elements of
$\Cal B_1^\oplus(\widehat{M(2)},\nu)$ will be denoted as
\,$T=(T_\rho)_{\rho>0}$\,.
\\[2mm]
For the direction $e_1=\binom10\in\R^2$ we will consider,
similarly as before, the distribution
$d=\lim_{y\to0}\frac1y(\delta_{-ye_1}-\delta_{0e_1})$ and
$D(f)=f\star d$\,.
It follows that $(\pi_\rho(d)g)(\psi)=-i\rho\cos(\psi)\,g(\psi)$
\,(for $g\in\Cal H$) and
it is easy to see that $D$ does {\it not} produce a bounded mapping
$A(M(2))\to\VN(M(2))$\,. As sketched above, we will next investigate the
dual convolution.
\\
For the decomposition of tensor products, we follow \cite{Vi}\,Ch.\,IV,\,\S6.
\,$\Cal H\otimes_2\Cal H$ (completed
Hilbert space tensor product) is identified with
$L^2(\T^2,\frac1{(2\pi)^2}d(\psi_1,\psi_2)$. For $f\in\Cal H\otimes_2\Cal H$
define \,$(Wf)(\mu)(\psi)=f(\psi,\psi+\mu)$. It follows from standard
measure theory that $(Wf)(\mu)\in\Cal H$ for a.e.\,$\mu$ and
$W\!:\Cal H\otimes\Cal H\to L^2(\T,\frac1{2\pi}d\mu,\Cal H)$ gives
an isometric isomorphism.
\\
For $\varphi\in\T\,,\ g\in\Cal H$\,, we define
$(U_\varphi g)(\psi)=g(\psi-\varphi)$, giving a unitary operator on
$\Cal H$\,, depending strongly continuously on $\varphi$\,.  Fix now
$\rho_1,\rho_2>0$ and for $\mu\in\R$ put
$\rho_1+\rho_2\,e^{i\mu}=\rho\,e^{i\varphi}$, defining functions
$\rho=\rho(\rho_1,\rho_2,\mu)=\lvert\rho_1+\rho_2\,e^{i\mu}\rvert$ and
$\varphi=\varphi(\rho_1,\rho_2,\mu)$ \,(using only the cosets of
$\mu,\varphi\mod2\pi$)\,. Finally, we define
\,$(W_{\!\rho_1,\rho_2}f)(\mu)=U_\varphi((Wf)(\mu))$
\,($f\in\Cal H\otimes_2\Cal H$), giving again an isometric isomorphism
$W_{\!\rho_1,\rho_2}\!:\Cal H\otimes_2\Cal H\to
L^2(\T,\frac1{2\pi}d\mu,\Cal H)$,
\;$(\rho_1,\rho_2)\mapsto W_{\!\rho_1,\rho_2}$ is strongly continuous.
For $n\in\Z$ we define $M_ng=\chi_n\cdot g$ giving also a unitary operator on
$\Cal H$\,.

\begin{Pro} \label{tenseu}  
For $\rho_1,\rho_2>0,\ n\in\Z,\ x\in M(2)$\,, we have\vspace{-1.5mm}
\begin{gather*}
W_{\!\rho_1,\rho_2}\circ(\pi_{\rho_1}\otimes\pi_{\rho_2})(x)
\circ W_{\!\rho_1,\rho_2}^*=\int_\T^\oplus\pi_\rho(x)\,d\mu
\\[1.5mm]
(\chi_n\otimes\pi_\rho)(x)\;=\;M_n\pi_\rho(x)M_n^*\,.
\end{gather*}
\end{Pro}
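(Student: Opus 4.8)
The plan is to verify both identities by checking them on the generators of $M(2)=\R^2\rtimes\T$, using that both sides of each equation define representations of $M(2)$. For the first identity the right-hand side $\int_\T^\oplus\pi_\rho(x)\,d\mu$ is the direct integral of the measurable field $\mu\mapsto\pi_{\rho(\rho_1,\rho_2,\mu)}$ of irreducibles (so it is a representation), and $W_{\!\rho_1,\rho_2}$ is unitary (an isometric isomorphism, as already noted), so conjugation by it preserves the representation property. Since $\R^2$ and $\T$ generate $M(2)$, it suffices to check the intertwining
\[
W_{\!\rho_1,\rho_2}\circ(\pi_{\rho_1}\otimes\pi_{\rho_2})(x)=\Bigl(\int_\T^\oplus\pi_\rho(x)\,d\mu\Bigr)\circ W_{\!\rho_1,\rho_2}
\]
separately for $x\in\R^2$ and $x\in\T$. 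First I would record the explicit kernel of $W_{\!\rho_1,\rho_2}$: unwinding the definitions gives $(W_{\!\rho_1,\rho_2}f)(\mu)(\psi)=f(\psi-\varphi,\,\psi-\varphi+\mu)$ with $\varphi=\varphi(\rho_1,\rho_2,\mu)$.

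For a rotation $x=\beta\in\T$ the operator $\pi_\rho(\beta)$ is the shift $g\mapsto g(\cdot-\beta)$, independently of $\rho$, and applying either side to $f$ produces, after $W_{\!\rho_1,\rho_2}$, the function $(\mu,\psi)\mapsto f(\psi-\varphi-\beta,\,\psi-\varphi-\beta+\mu)$; here one uses that $\mu$, $\varphi$ and $\rho$ are untouched by the rotation. For a translation $x=v=r\binom{\cos\phi}{\sin\phi}\in\R^2$ both $\pi_{\rho_i}(v)$ and $\pi_\rho(v)$ are multiplication operators, so after inserting $\psi_1=\psi-\varphi,\ \psi_2=\psi-\varphi+\mu$ the whole claim collapses to the phase identity
\[
\rho_1\cos(\theta+\varphi)+\rho_2\cos(\theta+\varphi-\mu)=\rho\cos\theta\qquad(\theta=\phi-\psi).
\]
This is precisely where the design of $W_{\!\rho_1,\rho_2}$ (the shear $W$ followed by the fibrewise rotation $U_\varphi$) pays off, and it is the one genuinely computational point: taking real parts of $e^{i(\theta+\varphi)}(\rho_1+\rho_2e^{-i\mu})$ and using $\rho_1+\rho_2e^{-i\mu}=\overline{\rho_1+\rho_2e^{i\mu}}=\rho\,e^{-i\varphi}$ (from the defining relation $\rho\,e^{i\varphi}=\rho_1+\rho_2e^{i\mu}$ and the reality of $\rho$), the left side becomes $\mathrm{Re}(\rho\,e^{i\theta})=\rho\cos\theta$. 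I expect this phase identity to be the crux: it encodes that the product of two plane waves with wave vectors of lengths $\rho_1,\rho_2$ at angle $\mu$ is a single plane wave whose wave vector has length $\rho=\lvert\rho_1+\rho_2e^{i\mu}\rvert$, which is exactly the geometric content of the decomposition $\pi_{\rho_1}\otimes\pi_{\rho_2}\simeq\int_\T^\oplus\pi_\rho\,d\mu$.

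The second identity is immediate from the fact that $\chi_n$ is one dimensional and trivial on $\R^2$, so $\chi_n\otimes\pi_\rho$ acts on $\Cal H$ by $(\chi_n\otimes\pi_\rho)(v)=\pi_\rho(v)$ for $v\in\R^2$ and $(\chi_n\otimes\pi_\rho)(\alpha)=e^{in\alpha}\pi_\rho(\alpha)$ for $\alpha\in\T$. Since $\pi_\rho(v)$ is multiplication by $e^{i\rho\,r\cos(\phi-\psi)}$ and $M_n$ is multiplication by $e^{in\psi}$, the two commute, whence $M_n\pi_\rho(v)M_n^*=\pi_\rho(v)$; while conjugating the shift $(\pi_\rho(\alpha)g)(\psi)=g(\psi-\alpha)$ by $M_n$ produces the factor $e^{in\psi}e^{-in(\psi-\alpha)}=e^{in\alpha}$, giving $M_n\pi_\rho(\alpha)M_n^*=e^{in\alpha}\pi_\rho(\alpha)$. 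Checking these on the generators $\R^2$ and $\T$ again suffices by multiplicativity, which completes the proof.
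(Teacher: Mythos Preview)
Your proof is correct and follows essentially the same approach as the paper: both verify the intertwining on the generators $\R^2$ and $\T$, first recording the explicit formula $(W_{\!\rho_1,\rho_2}f)(\mu)(\psi)=f(\psi-\varphi,\psi-\varphi+\mu)$, then reducing the $\R^2$-case to the phase identity coming from $\rho_1+\rho_2e^{i\mu}=\rho e^{i\varphi}$ (the paper states it in the equivalent vector form $\rho_1\binom{\cos\psi}{\sin\psi}+\rho_2\binom{\cos(\psi+\mu)}{\sin(\psi+\mu)}=\rho\binom{\cos(\psi+\varphi)}{\sin(\psi+\varphi)}$), while the $\T$-case and the second formula are handled by the same direct computations you give.
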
\noindent
The first formula gives a decomposable operator on
$L^2(\T,\frac1{2\pi}d\mu,\Cal H)$
\;(\cite{T1}\,Ch.\,IV, Def.\,7.9). Both generalize to the extensions of
the representations to $L^1(M(2))$.
\begin{proof}
Take $f\in\Cal H\otimes_2\Cal H$ and put $g=W_{\!\rho_1,\rho_2}f$\,.
Then $f=W_{\!\rho_1,\rho_2}^*g$ and $g(\mu)(\psi)=(Wf)(\mu)(\psi-\varphi)=
f(\psi-\varphi,\psi-\varphi+\mu)$.
It is enough to consider the case $f=f_1\otimes f_2$\,, \,then
$f(\psi-\varphi,\psi-\varphi+\mu)=f_1(\psi-\varphi)\,f_2(\psi-\varphi+\mu)$
and $(\pi_{\rho_1}\otimes\pi_{\rho_2})(x)f=
(\pi_{\rho_1}(x)f_1)\otimes(\pi_{\rho_2}(x)f_2)$.
For $\alpha\in\T$ an easy computation gives
$[(W_{\!\rho_1,\rho_2}\circ(\pi_{\rho_1}\otimes\pi_{\rho_2})(\alpha))f](\mu)
(\psi)=$\newline$f_1(\psi-\varphi-\alpha)\,
f_2(\psi+\mu-\varphi-\alpha)=[\pi_\rho(\alpha)(g(\mu))](\psi)$\,,
and vor $v=r\,\binom{\cos\phi}{\sin\phi}\in\R^2$, using
$\rho_1\binom{\cos\psi}{\sin\psi}+\rho_2\binom{\cos(\psi+\mu)}{\sin(\psi+\mu)}
=\rho\binom{\cos(\psi+\varphi)}{\sin(\psi+\varphi)}$ \,one gets \
\vspace{.5mm}
$[(W_{\!\rho_1,\rho_2}\circ(\pi_{\rho_1}\otimes\pi_{\rho_2})(v))f](\mu)
(\psi)=$\newline$e^{i\rho r\cos(\phi-\psi)}f_1(\psi-\varphi)\,
f_2(\psi+\mu-\varphi)=[\pi_\rho(v)(g(\mu))](\psi)$ showing the first formula.
\\[2mm]
We identify $\C\otimes\Cal H$ with $\Cal H$ (via
$\gamma\otimes h\mapsto\gamma h$) and
consider $\chi_n\otimes\pi_\rho$ as a representation on $\Cal H$\,. The
second formula is checked easily  (it implements the equivalence\linebreak
$\chi_n\otimes\pi_\rho\simeq\pi_\rho$).
\end{proof}\noindent
The embedding of the subalgebra $L^\infty(\T)\overline\otimes\Cal B(\Cal H)$
of decomposable operators into the algebra $\Cal B(L^2(\T,\Cal H))$ of all
bounded operators defines a projection
\,$\pr_{\Cal S}\!:{ } \linebreak\Cal S^1(L^2(\T,\Cal H))
\to L^1(\T,\Cal S^1(\Cal H))$ of
the preduals (using \cite{T1}\,Ch.\,IV,\,Th.\,7.17).
For $A_1,A_2\in\Cal S^1(\Cal H)$ define
\[
\theta_0(A_1,A_2)=\pr_{\Cal S}(W_{\!\rho_1,\rho_2}\circ(A_1\otimes A_2)
\circ W_{\!\rho_1,\rho_2}^*) \quad (\in L^1(\T,\frac1{2\pi}d\mu,
\Cal S^1(\Cal H))
\]
(a more explicit description in terms of
integration kernels will be given in Lemma\,\ref{integk} below).
\begin{Cor1}	\label{prmu}  
For $A_1,A_2\in\Cal S^1(\Cal H)$ consider coefficients
\,$u_j(x)=\tr(A_j\,\pi_{\rho_j}(x)^*)\linebreak (j=1,2)$. Then
\[ u_1(x)\,u_2(x)=\frac1{2\pi}\,\int_0^{2\pi}
\tr\bigl(\,\pi_\rho(x)^*\,\theta_0(A_1,A_2)(\mu)\,\bigr)\,d\mu\,.
\]
\end{Cor1}
\begin{proof}
It is enough to show this for $A_jh=(h\vert h_j)k_j$\,, where
$h_j,k_j\in\Cal H\ \ (j=1,2)$. Then\quad
$u_j(x)=(k_j\vert\,\pi_{\rho_j}(x)h_j),\quad
u_1(x)\,u_2(x)=
(\,k_1\otimes k_2\,\vert
\;[(\pi_{\rho_1}\otimes\pi_{\rho_2})(x)](h_1\otimes h_2)\,)$
\;and the formula follows by a short computation from
Proposition\,\ref{tenseu}.
\end{proof}\noindent
By a change of coordinates, this can be expressed in terms of the
Plancherel measure. We have
\,$\cos\mu=\dfrac{\rho^2-\rho_1^2-\rho_2^2}{2\rho_1\rho_2}$\,, \vspace{1mm}
hence
$\mu\mapsto\rho(\rho_1,\rho_2,\mu)$ (briefly $\rho(\mu)$\,) is injective on
$[0,\pi]$ and on $[\pi,2\pi]$ and it maps both intervals onto
$[\lvert\rho_1-\rho_2\rvert,\rho_1+\rho_2]$\,.
In addition $\rho(\mu)=\rho(2\pi-\mu)$.
For $A_1,A_2\in\Cal S^1(\Cal H),\ \rho=\rho(\mu)$, we define
\[
\theta(A_1,A_2)(\rho)=\theta_0(A_1,A_2)(\mu)+\theta_0(A_1,A_2)(2\pi-\mu)
\]
($\theta_0,\theta$ depend on $\rho_1,\rho_2$ as well, sometimes we will
write $\theta(A_1,A_2,\rho_1,\rho_2)$\,). For
$\lvert\rho_1-\rho_2\rvert<\rho<\rho_1+\rho_2$ \,put
\,$K(\rho_1,\rho_2,\rho)=
\dfrac2{\pi\sqrt{4\rho_1^2\rho_2^2-(\rho^2-\rho_1^2-\rho_2^2)^2}}$ \
\ \vspace{1.5mm}(this is the kernel defining the Bessel-Kingman hypergroup --
\cite{BH}\,p.\,235 with $\alpha=0$).
\begin{Cor1}	 \label{prrh}	   
For $A_1,A_2,u_1,u_2$ as in Corollary\,\ref{prmu}, we have \
$\theta(A_1,A_2,\rho_1,\rho_2)\in
L^1(\,[\lvert\rho_1-\rho_2\rvert,\rho_1+\rho_2]\,,
\frac12K(\rho_1,\rho_2,\rho)\,\rho\,d\rho\,,\Cal S^1(\Cal H)\,)$ \ and
\[ u_1(x)\,u_2(x)=
\,\int_{\lvert\rho_1-\rho_2\rvert}^{\rho_1+\rho_2}
\frac12\,\tr\bigl(\,\pi_\rho(x)^*\,\theta(A_1,A_2)(\rho)\,\bigr)
\,K(\rho_1,\rho_2,\rho)\,d\nu(\rho)\,.
\]
\end{Cor1}\noindent
Thus $u_1\cdot u_2=
\Cal F^{-1}\bigl(\,(\frac12\,\theta(A_1,A_2)(\rho)\,K(\rho_1,\rho_2,\rho))_{
\lvert\rho_1-\rho_2\rvert<\rho<\rho_1+\rho_2}\bigr)\in A(M(2))$
(the operator field is
meant to be extended by $0$ for $\rho$ outside the interval). In particular,
one gets that \;$\pi_{\rho_1}\otimes\pi_{\rho_2}\simeq
\,2\,\int_{\lvert\rho_1-\rho_2\rvert}^{\rho_1+\rho_2}\pi_{\rho}\,d\rho$\,,
but the formula above gives more detailed information for the product
of coefficients.
\begin{proof}
Again, an elementary computation shows that under the substitution,\linebreak
$\frac1{2\pi}d\mu=\frac12K(\rho_1,\rho_2,\rho)\,\rho\,d\rho$ holds for
$\mu\in[0,\pi]$ and similarly for $\mu\in[\pi,2\pi]$.
\end{proof}
\begin{Cor1}	 \label{dualc}	   
For
\;$S=(S_\rho),T=(T_\rho)\in \Cal B_1^\oplus(\widehat{M(2)},\nu)$, we get
\[
(S\sharp T)_\rho=\iint_{D_\rho}\frac12\,
\theta(S_{\rho_1},T_{\rho_2})(\rho)\,K(\rho_1,\rho_2,\rho)\,
\rho_1\rho_2\,d(\rho_1,\rho_2)
\]
where $D_\rho$ denotes the half-strip $\{(\rho_1,\rho_2):\;\rho_1,\rho_2>0,\,
\lvert\rho_1-\rho_2\rvert<\rho<\rho_1+\rho_2\}$.
\end{Cor1}\noindent
It follows that if $S_\rho=0$ for $\rho\notin[a_1,b_1]$,
$T_\rho=0$ for $\rho\notin[a_2,b_2]$ then
$(S\sharp T)_\rho=0$ for
$\rho\notin\,]\mspace{1mu}\lvert a_1-a_2\rvert\,,b_1+b_2[$ \,(see also
Lemma\,\ref{convo}).
One can give similar expressions for
products of functions from the Fourier-Stieltjes algebra, using arbitrary
finite measures on $]0,\infty[$ and adding contributions from the
one dimensional representations.
\begin{proof}
For fixed $\rho_1,\rho_2>0\,,\ A_1,A_2\in\Cal S^1(\Cal H)$, we have noted
after Corollary\,\ref{prrh} that $\rho\mapsto
\frac12\,\theta(A_1,A_2)(\rho)\,K(\rho_1,\rho_2,\rho)$ (extended by $0$
outside $]\lvert\rho_1-\rho_2\rvert,\rho_1+\rho_2[$\,) defines an element
$\dc(A_1,A_2,\rho_1,\rho_2)$ of $\Cal B_1^\oplus(\widehat{M(2)})$ of norm
$\le\lVert A_1\rVert_1\lVert A_2\rVert_1$\,. Once we can show that
for $S,T\in\Cal B_1^\oplus(\widehat{M(2)})\ \
(\rho_1,\rho_2)\mapsto\dc(S_{\rho_1},T_{\rho_2},\rho_1,\rho_2)$ defines a
measurable mapping \,$]0,\infty[^2\to\Cal B_1^\oplus(\widehat{M(2)})$\,,
its integrability (with respect to $\nu\otimes\nu$) follows immediately,
hence the integral in the Corollary defines an element $\dc(S,T)$ of
$\Cal B_1^\oplus(\widehat{M(2)})$. Then, by Fubini's theorem and
Corollary\,\ref{prrh} it is not hard to see that
$\Cal F^{-1}(\dc(S,T))=\Cal F^{-1}(S)\,\Cal F^{-1}(T)$\,, hence
\,$\dc(S,T)=S\sharp T$ \,proving the Corollary.
\\[2mm]
Since $(\rho_1,\rho_2)\mapsto W_{\!\rho_1,\rho_2}$ is strongly continuous
it follows that
$(\rho_1,\rho_2)\mapsto W_{\!\rho_1,\rho_2}\circ\linebreak(A_1\otimes A_2)
\circ W_{\!\rho_1,\rho_2}^*$ is norm continuous for fixed
$A_1,A_2\in\Cal S^1(\Cal H)$. Consequently,
$(\rho_1,\rho_2)\mapsto\theta_0(A_1,A_2,\rho_1,\rho_2)\;
\in L^1(\T,\Cal S^1(\Cal H))$ is norm continuous. From basic
properties of the Bochner integral (see e.g.\,\cite{DS}\,Th.\,III.11.17) it
follows that
$(\rho_1,\rho_2,\mu)\mapsto\theta_0(A_1,A_2,\rho_1,\rho_2)(\mu)\;
\in\Cal S^1(\Cal H)$ is (a.e.) measurable on $]0,\infty[^2\times\T$\,.
The substitution $\rho=\rho(\mu)$ being continuous, we get that
$(\rho_1,\rho_2,\rho)\mapsto\theta(A_1,A_2,\rho_1,\rho_2)(\rho)$ is
measurable on
$\{(\rho_1,\rho_2,\rho):\ \lvert\rho_1-\rho_2\rvert<\rho<\rho_1+\rho_2\}$.
Then by standard measure theoretical approximation techniques it follows
that
$(\rho_1,\rho_2,\rho)\mapsto\theta(S_{\rho_1},T_{\rho_2},\rho_1,\rho_2)(\rho)$
is measurable for $S=(S_\rho),T=(T_\rho)\in \Cal B_1^\oplus(\widehat{M(2)})$.
$(\rho_1,\rho_2,\rho)\mapsto K(\rho_1,\rho_2,\rho)$ being continuous,
we get measurability of
$(\rho_1,\rho_2)\mapsto\dc(S_{\rho_1},T_{\rho_2},\rho_1,\rho_2)$.
\end{proof}\noindent
We represent now as integral operators $(Ah)(\psi)=
\frac1{2\pi}\int_0^{2\pi} k_A(\psi,\psi_1)\,h(\psi_1)\,d\psi_1 \linebreak
(\psi\in\T\,,\;
h\in L^2(\T)\,;\ k_A\in L^2(\T^2)$ \,being called the integration kernel
for $A$). By classical results, $A\mapsto k_A$
defines an isometric isomorphism
\,$\Cal S^2(\Cal H)\to L^2(\T^2,\linebreak
\frac1{4\pi^2}d(\psi,\psi_1))$. Recall that
$\Cal S^1(\Cal H)\subseteq\Cal S^2(\Cal H)$, hence this applies for
$A\in\Cal S^1(\Cal H)$. For one dimensional operators $Ah=(h\vert h_0)k_0\,,\
h_0,k_0\in\Cal H$\, (as in the proof of Corollary\,\ref{prmu} above), one has
$k_A(\psi,\psi_1)=k_0(\psi)\,\overline{h_0(\psi_1)}$\,. In general, $k_A$
can also be obtained by considering $A\in\Cal S^1(\Cal H)$ as an absolutely
convergent sum of one dimensional operators. The operators $M_n\ (n\in\Z)$
defined before Proposition\,\ref{tenseu} induce operators
${\bf M}_n\!:\Cal S_1(\Cal H)\to\Cal S_1(\Cal H)$ by
${\bf M}_n(A)=M_n\circ A\circ{M_n}^*$.
\begin{Lem}	 \label{integk}	   
For $\rho_1,\rho_2>0\,,\ A_1,A_2\in\Cal S^1(\Cal H)$ we have for almost all
$\mu\in\T$
$k_{\theta_0(A_1,A_2)(\mu)}(\psi,\psi_1)\,=\,
k_{A_1}(\psi-\varphi\,,\psi_1-\varphi)\:
k_{A_2}(\psi-\varphi+\mu\,,\psi_1-\varphi+\mu)$.\\
$k_{{\bf M}_n(A)}(\psi,\psi_1)\,=\,\chi_n(\psi-\psi_1)\,k_A(\psi,\psi_1)$ for
$A\in\Cal S^1(\Cal H),\,n\in\Z$\,.\vspace{-1mm}
\end{Lem}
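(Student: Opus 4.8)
\emph{Plan.} Both identities are stated for general $A,A_1,A_2\in\Cal S^1(\Cal H)$, but each side is linear (resp. bilinear) and continuous: $A\mapsto k_A$ is linear and isometric from $\Cal S^2(\Cal H)$ onto $L^2(\T^2)$, $\theta_0$ is separately linear with $\lVert\theta_0(A_1,A_2)\rVert_{L^1(\T,\Cal S^1)}\le\lVert A_1\rVert_1\lVert A_2\rVert_1$ (since $\pr_{\Cal S}$ is a contraction, $W_{\!\rho_1,\rho_2}$ is unitary and the trace norm is multiplicative on tensors), and ${\bf M}_n$ is linear and isometric on $\Cal S^1(\Cal H)$. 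As recalled just before the lemma, every trace class operator is an absolutely convergent sum of one dimensional operators, so it suffices to verify each formula when the $A_j$ are one dimensional. The plan is therefore to reduce to the rank one case, where every object becomes an explicit elementary tensor, and to identify $\pr_{\Cal S}$ concretely on rank one operators.

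\emph{First formula.} Write $A_jh=(h\vert h_j)k_j$, so that $k_{A_j}(\psi,\psi_1)=k_j(\psi)\,\overline{h_j(\psi_1)}$. Then $A_1\otimes A_2$ is the one dimensional operator $f\mapsto(f\vert h_1\otimes h_2)\,(k_1\otimes k_2)$ on $\Cal H\otimes_2\Cal H$, and conjugating by the unitary $W_{\!\rho_1,\rho_2}$ yields the one dimensional operator $g\mapsto(g\vert p)\,q$ on $L^2(\T,\frac1{2\pi}d\mu,\Cal H)$, where $p=W_{\!\rho_1,\rho_2}(h_1\otimes h_2)$ and $q=W_{\!\rho_1,\rho_2}(k_1\otimes k_2)$. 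Using $(W_{\!\rho_1,\rho_2}f)(\mu)(\psi)=f(\psi-\varphi,\psi-\varphi+\mu)$ (with $\varphi=\varphi(\rho_1,\rho_2,\mu)$) this gives the explicit fields $p(\mu)(\psi)=h_1(\psi-\varphi)\,h_2(\psi-\varphi+\mu)$ and $q(\mu)(\psi)=k_1(\psi-\varphi)\,k_2(\psi-\varphi+\mu)$. The key step is to compute $\pr_{\Cal S}$ of a one dimensional operator: pairing $g\mapsto(g\vert p)\,q$ against a decomposable field $(C(\mu))_\mu$ gives $\int_\T(C(\mu)q(\mu)\vert p(\mu))\,\frac{d\mu}{2\pi}=\int_\T\tr(C(\mu)\,[\,h\mapsto(h\vert p(\mu))\,q(\mu)\,])\,\frac{d\mu}{2\pi}$, so that $\pr_{\Cal S}$ sends it to the field $\mu\mapsto[\,h\mapsto(h\vert p(\mu))\,q(\mu)\,]$, which lies in $L^1(\T,\Cal S^1(\Cal H))$ by Cauchy--Schwarz. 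Hence $\theta_0(A_1,A_2)(\mu)$ is the one dimensional operator with kernel $q(\mu)(\psi)\,\overline{p(\mu)(\psi_1)}$, and substituting the expressions for $p,q$ reproduces exactly $k_{A_1}(\psi-\varphi,\psi_1-\varphi)\,k_{A_2}(\psi-\varphi+\mu,\psi_1-\varphi+\mu)$.

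\emph{Second formula.} This is a direct computation. Since $M_n$ is multiplication by $\chi_n$ and $M_n^*$ is multiplication by $\overline{\chi_n}=\chi_{-n}$, for $A$ with kernel $k_A$ one gets $(M_nAM_n^*h)(\psi)=\frac1{2\pi}\int_0^{2\pi}\chi_n(\psi)\,\overline{\chi_n(\psi_1)}\,k_A(\psi,\psi_1)\,h(\psi_1)\,d\psi_1$, and $\chi_n(\psi)\,\overline{\chi_n(\psi_1)}=\chi_n(\psi-\psi_1)$ yields the claim.

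\emph{Main obstacle.} The only genuinely delicate point is the concrete description of the predual projection $\pr_{\Cal S}$ (via \cite{T1}\,Ch.\,IV,\,Th.\,7.17) as the restriction of the integration kernel of $W_{\!\rho_1,\rho_2}(A_1\otimes A_2)W_{\!\rho_1,\rho_2}^*$ on $L^2(\T^2)$ to the diagonal $\mu=\mu_1$. For a general trace class operator this restriction to a null set is a priori problematic, and the identity can only be expected to hold for almost every $\mu$. The reduction to one dimensional operators circumvents this, since there $\pr_{\Cal S}$ is identified \emph{exactly}, by the pairing computation above, as a norm convergent sum of fields of rank one operators, with no diagonal restriction required.
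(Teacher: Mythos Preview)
Your proposal is correct and follows essentially the same approach as the paper: the paper's own proof simply says ``This follows again from the definitions by direct computation, starting with one dimensional $A_1,A_2$'', and you have spelled out exactly that computation. Your explicit identification of $\pr_{\Cal S}$ on rank one operators via the duality pairing, and your remark on why the ``almost every $\mu$'' qualification is needed when passing to general trace class operators, supply detail that the paper omits but match its intended argument.
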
\noindent
$\varphi=\varphi(\rho_1,\rho_2,\mu)$ is defined before
Proposition\,\ref{tenseu}. Adding the kernels obtained above for $\mu$
and $2\pi-\mu$ \,(observe that $\varphi(2\pi-\mu)=-\varphi(\mu)$) gives a
corresponding representation for
$\theta(A_1,A_2)(\rho)$\,. For $A_1,A_2$ one dimensional,
$\theta(A_1,A_2)(\rho)$ will (in general) be a two dimensional operator.
It is not hard to construct examples of (e.g.\;one dimensional)
$A_1,A_2\in\Cal S^1(\Cal H)$ such that (using the formula above)
$k_{\theta_0(A_1,A_2)(\mu)}\notin L^2$ for some $\mu$\,. Then, in general,
$\theta(A_1,A_2)(\rho)\in\Cal S^1(\Cal H)$ holds just $\rho$-a.e.
and $\rho\mapsto\lVert\theta(A_1,A_2)(\rho)\rVert_1$ is unbounded.
\begin{proof}
This follows again from the definitions by direct computation, starting with
one dimensional $A_1,A_2$\,.
\end{proof}\noindent
We extend the operators ${\bf M}_n$ to operator fields, defining for
$T=(T_\rho):\ {\bf M}_n(T)=({\bf M}_n(T_\rho))\ \;(n\in\Z)$.
\begin{Cor}	 	   
$\theta_0(A_1,{\bf M}_n(A_2))={\bf M}_n(\theta_0(A_1,A_2))$ for
$A_1,A_2\in\Cal S^1(\Cal H)$.\\
$S\:\sharp\:{\bf M}_n(T)={\bf M}_n(S\sharp T)$ for
$S,T\in \Cal B_1^\oplus(\widehat{M(2)},\nu)$.
\end{Cor}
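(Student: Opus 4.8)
The plan is to derive the first identity directly from the integration--kernel formulas of Lemma \ref{integk} and then to obtain the second one by pulling ${\bf M}_n$ through the integral of Corollary \ref{dualc}.

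First I would compute the integration kernel of $\theta_0(A_1,{\bf M}_n(A_2))(\mu)$. By Lemma \ref{integk} the kernel of $\theta_0(A_1,B)(\mu)$ is $k_{A_1}(\psi-\varphi,\psi_1-\varphi)\,k_B(\psi-\varphi+\mu,\psi_1-\varphi+\mu)$, and inserting $B={\bf M}_n(A_2)$ together with $k_B(\psi,\psi_1)=\chi_n(\psi-\psi_1)\,k_{A_2}(\psi,\psi_1)$ produces a factor $\chi_n\big((\psi-\varphi+\mu)-(\psi_1-\varphi+\mu)\big)=\chi_n(\psi-\psi_1)$. The crucial point is that both arguments of $A_2$ are shifted by the same amount $-\varphi+\mu$, so the difference on which $\chi_n$ depends is unchanged; the character factor therefore comes out exactly as $\chi_n(\psi-\psi_1)$, which by Lemma \ref{integk} is precisely the kernel of ${\bf M}_n(\theta_0(A_1,A_2))(\mu)$. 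Since equality of integration kernels (valid for a.e.\,$\mu$, and for those $\mu$ where $\theta_0(A_1,A_2)(\mu)\in\Cal S^1(\Cal H)$) forces equality of the operators, this yields the first identity for arbitrary $A_1,A_2\in\Cal S^1(\Cal H)$.

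Next, because $\theta(A_1,A_2)(\rho)=\theta_0(A_1,A_2)(\mu)+\theta_0(A_1,A_2)(2\pi-\mu)$ and ${\bf M}_n$ is linear, the identity passes from $\theta_0$ to $\theta$, giving $\theta(A_1,{\bf M}_n(A_2))(\rho)={\bf M}_n(\theta(A_1,A_2)(\rho))$. Substituting this into the formula of Corollary \ref{dualc} for $S\sharp{\bf M}_n(T)$ (where $({\bf M}_n(T))_{\rho_2}={\bf M}_n(T_{\rho_2})$), and using that ${\bf M}_n(A)=M_n\circ A\circ M_n^*$ is an isometric (hence bounded) linear map of $\Cal S^1(\Cal H)$ and therefore commutes with the Bochner integral over $D_\rho$, one pulls ${\bf M}_n$ outside the integral to obtain $(S\sharp{\bf M}_n(T))_\rho={\bf M}_n((S\sharp T)_\rho)$, i.e.\ the second identity.

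The computations are routine; the only points requiring care are the measurability and a.e.\ bookkeeping already settled in the proof of Corollary \ref{dualc}, together with the single structural observation that makes the whole statement work: $\chi_n$ is evaluated on the \emph{difference} of the two kernel variables and is hence invariant under the common shift by $-\varphi+\mu$, so conjugation by $M_n$ commutes with the shift-and-trace operation defining $\theta_0$.
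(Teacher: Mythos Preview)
Your proof is correct and is exactly the argument the paper has in mind: the Corollary is stated without proof precisely because it follows immediately from the kernel formulas of Lemma~\ref{integk}, via the observation you isolate (that $\chi_n$ depends only on $\psi-\psi_1$ and is therefore unaffected by the common shift $-\varphi+\mu$), together with Corollary~\ref{dualc} and the fact that the isometry ${\bf M}_n$ commutes with the Bochner integral.
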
\noindent
We use the embedding $A\mapsto k_A$ to consider $\Cal S^1$ as a subset of
$L^1(\T^2,\frac1{(2\pi)^2}d(\psi,\psi_1))$. For better distinction,
we denote the $L^1$-norm as $\lVert\ \rVert_{L^1}\,,\ \lVert\ \rVert_1$
shall refer (as before) to the $\Cal S^1$-norm. Using the formula of
Lemma\,\ref{integk} we define $\theta(a_1,a_2)(\rho)$ for arbitrary
measurable functions $a_1,a_2$ on $\T^2$
\;(thus $\theta(k_{A_1},k_{A_2})(\rho)=k_{\theta(A_1,A_2)(\rho)}$\,).
Also the operators ${\bf M}_n\ (n\in\Z)$ extend to measurable functions
on $\T^2$, giving isometrical isomorphisms of $L^1(\T^2)$. As above, they
extend further to $L^1(\T^2)$-valued vector fields.\vspace{-2mm} 
\begin{Lem}	 \label{ineq}	   
\thetag{a}\ $\lVert k_A\rVert_{L^1}\le\lVert A\rVert_1$
\item[(b)]
$\lVert\theta(k_{A_1},k_{A_2})(\rho)\rVert_{L^1}\le
2\,\lVert A_1\rVert_1\,\lVert A_2\rVert_1$
\item[(c)]
$\lVert\theta(a_1,a_2)(\rho)\rVert_{L^1}\le
2\,\lVert a_1\rVert_{L^\infty}\lVert a_2\rVert_{L^1}$
\item[(d)]
$\lVert\theta(a_1,a_2)(\rho)\rVert_{L^\infty}\le
2\,\lVert a_1\rVert_{L^\infty}\lVert a_2\rVert_{L^\infty}$\,.
\end{Lem}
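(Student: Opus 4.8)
All four estimates rest on the explicit description of $\theta$ supplied by Lemma~\ref{integk}: for a fixed value of $\rho$ (equivalently, fixed $\mu$ and $\varphi=\varphi(\rho_1,\rho_2,\mu)$) the kernel $\theta(a_1,a_2)(\rho)(\psi,\psi_1)$ is the sum of the product $a_1(\psi-\varphi,\psi_1-\varphi)\,a_2(\psi-\varphi+\mu,\psi_1-\varphi+\mu)$ and of its companion obtained by replacing $(\mu,\varphi)$ by $(2\pi-\mu,-\varphi)$. In each summand the arguments of $a_1$ and of $a_2$ arise from $(\psi,\psi_1)$ by a translation of $\T^2$, hence by a measure-preserving map. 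Parts \thetag{d} and \thetag{c} are then immediate. For \thetag{d} each summand is pointwise at most $\lVert a_1\rVert_{L^\infty}\lVert a_2\rVert_{L^\infty}$ in modulus, so their sum is at most twice that. For \thetag{c} I would estimate the factor $a_1$ by $\lVert a_1\rVert_{L^\infty}$ and integrate $|a_2(\cdots)|$ over $\T^2$; by translation invariance of Haar measure this integral equals $\lVert a_2\rVert_{L^1}$, and adding the two summands produces the constant $2$.

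For \thetag{a} and \thetag{b} the plan is to reduce to rank-one operators and then pass to the general case by the singular value decomposition. If $Ah=(h\vert h_0)\,k_0$ is rank-one, then $k_A(\psi,\psi_1)=k_0(\psi)\,\overline{h_0(\psi_1)}$ and $\lVert A\rVert_1=\lVert k_0\rVert_2\,\lVert h_0\rVert_2$, whence, since $\T$ carries a probability measure, Cauchy--Schwarz gives $\lVert k_A\rVert_{L^1}=\lVert k_0\rVert_{L^1}\lVert h_0\rVert_{L^1}\le\lVert k_0\rVert_2\lVert h_0\rVert_2=\lVert A\rVert_1$. Writing a general $A\in\Cal S^1(\Cal H)$ as $A=\sum_m s_m\,(\,\cdot\,\vert\,e_m)\,f_m$ with $(e_m),(f_m)$ orthonormal and $\sum_m s_m=\lVert A\rVert_1$, so that $k_A=\sum_m s_m\,f_m(\psi)\,\overline{e_m(\psi_1)}$, part \thetag{a} follows by the triangle inequality because $\lVert f_m\rVert_2=\lVert e_m\rVert_2=1$ bounds each rank-one contribution by $s_m$.

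Part \thetag{b} is the heart of the lemma. For rank-one $A_1,A_2$ with $k_{A_j}(\psi,\psi_1)=k_j(\psi)\,\overline{h_j(\psi_1)}$, the key observation is that each of the two summands of $\theta(k_{A_1},k_{A_2})(\rho)$ itself factors as a function of $\psi$ times a function of $\psi_1$; the first summand equals $\bigl[k_1(\psi-\varphi)\,k_2(\psi-\varphi+\mu)\bigr]\cdot\bigl[\overline{h_1(\psi_1-\varphi)}\,\overline{h_2(\psi_1-\varphi+\mu)}\bigr]$. Hence its $L^1(\T^2)$ norm splits into a product of two one-variable integrals, each estimated by Cauchy--Schwarz and translation invariance, e.g.
\[
\tfrac1{2\pi}\int|k_1(\psi-\varphi)\,k_2(\psi-\varphi+\mu)|\,d\psi\le\lVert k_1\rVert_2\lVert k_2\rVert_2,
\]
so that the summand contributes at most $\lVert A_1\rVert_1\lVert A_2\rVert_1$; the second summand is handled identically, giving the constant $2$ in the rank-one case. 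Since $\theta(a_1,a_2)(\rho)$ is bilinear in $(a_1,a_2)$, the general bound follows by inserting the singular value decompositions of $A_1$ and $A_2$ and summing: the double series $\sum_{m,n}s^{(1)}_m s^{(2)}_n$ converges to $\lVert A_1\rVert_1\lVert A_2\rVert_1$ and dominates the $L^1$-norms of the rank-one pieces, yielding $2\lVert A_1\rVert_1\lVert A_2\rVert_1$. The main obstacle is purely bookkeeping — the absolute convergence of these series in $L^1(\T^2)$, which is secured by summability of the singular values together with the uniform rank-one estimate just obtained, and the fact (implicit in Lemma~\ref{integk}) that the factorization identities hold almost everywhere.
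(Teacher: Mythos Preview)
Your argument is correct, but for \thetag{a} and \thetag{b} it is more laborious than necessary. The paper dispenses with the singular value decomposition entirely. For \thetag{a} one simply uses that $\T^2$ carries a probability measure, so $\lVert k_A\rVert_{L^1}\le\lVert k_A\rVert_{L^2}=\lVert A\rVert_2\le\lVert A\rVert_1$, the last step being the standard comparison of Schatten norms. For \thetag{b} the same idea works directly on the full kernel: by Lemma~\ref{integk} each summand of $\theta(k_{A_1},k_{A_2})(\rho)$ is a pointwise product of translates of $k_{A_1}$ and $k_{A_2}$, and Cauchy--Schwarz in $L^2(\T^2)$ (together with translation invariance) gives $\lVert k_{A_1}\rVert_{L^2}\lVert k_{A_2}\rVert_{L^2}=\lVert A_1\rVert_2\lVert A_2\rVert_2\le\lVert A_1\rVert_1\lVert A_2\rVert_1$ per summand. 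Your rank-one reduction reaches the same bound but costs you an extra layer of bookkeeping (absolute convergence of the double series in $L^1$), which the $L^2$ route avoids altogether. Your treatment of \thetag{c} and \thetag{d} matches the paper's.
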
\noindent
Note also that $L^\infty(\T^2)\cap\Cal S^1$ is dense in $\Cal S^1(\Cal H)$
for the $\Cal S^1$-norm, \;$\Cal S^1$ is dense in $L^1(\T^2)$ for the
$L^1$-norm.
\begin{proof}
For \thetag{a}: $\lVert k_A\rVert_{L^1}\le\lVert k_A\rVert_{L^2}=
\lVert A\rVert_2\le\lVert A\rVert_1$\,. For \thetag{b} use Lemma\,\ref{integk}
and the Cauchy-Schwarz inequality. \thetag{c},\thetag{d} are easy too.
\end{proof}\noindent
For a weight function $w$ on $]0,\infty[$ (i.e.
$w\!:\,]0,\infty[\to[0,\infty[$
is a measurable function; it will be specified later,
see Proposition\,\ref{wspec} and the Remark\,(a) below),
let
\[
\bW= L^1\bigl(\,]0,\infty[\,,\,w(\rho)\,d\rho\,,\,
L^1(\T^2,\frac1{(2\pi)^2}d(\psi,\psi_1)\,)\,\bigr)
\]
be the
$L^1(\T^2)$-valued {\it weighted} $L^1$-space on $]0,\infty[$. It consists of
(equivalence classes of) Bochner-measurable functions $t=(t_\rho)$ with
$t_\rho\in L^1(\T^2)$ and
$\lVert t\rVert_{\bW}=
\int_0^\infty\lVert t_\rho\rVert_{L^1}\,w(\rho)\,d\rho<\infty$\,.
\\
$\Cal W_0$ shall consist of
the (classes of) bounded, measurable, $L^\infty$-valued and
boundedly supported functions (i.e., $t_\rho\in L^\infty(\T^2)$,
$\sup_\rho\,\lVert t_\rho\rVert_{L^\infty}<\infty$ and there exists $b>0$
such that $t_\rho=0$ for $\rho>b$\,; in analogy to
\cite{T1}\,Ch.IV,\,Def.\,7.7 we do not require that the range of $t$ is
a.e.\,separable for the norm topology of $L^\infty$).
\\
$\bbW$ shall consist of the (classes of) Bochner-measurable functions
$t=(t_\rho)$ with $t_\rho\in L^1(\T^2)$ and
$\int_0^b\lVert t_\rho\rVert_{L^1}\,\rho\,d\rho<\infty$ \, for each $b>0$\,.
\begin{Lem}	 \label{wsub}	   
We have \,$\Cal W_0\,,\:\Cal B_1^\oplus(\widehat{M(2)},\nu)\subseteq\bbW$\,.
\\
$\Cal W_0\cap\Cal B_1^\oplus$ is dense in $\Cal B_1^\oplus$ (for the
$\Cal B_1^\oplus$-norm).
\\
If $w(\rho)\le\rho$ for all $\rho>0$\,, then
\,$\Cal W_0\,,\:\Cal B_1^\oplus\subseteq\bW$\,, both are dense (for the
$\bW$-norm).
\\
If $\frac{\rho}{w(\rho)}$ is bounded on $]0,b]$ for each $b>0$\,,
then $\bW\subseteq\bbW$\,.
\end{Lem}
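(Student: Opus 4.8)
The plan is to separate the two membership claims from the two density claims, since all four reduce to comparing the weights $w(\rho)$, $\rho$ and the fibre norms $\lVert\cdot\rVert_{L^1}$, $\lVert\cdot\rVert_{L^\infty}$, $\lVert\cdot\rVert_1$ through estimates already on hand. First I would record the two pointwise comparisons driving everything: on the normalized probability space $\T^2$ one has $\lVert a\rVert_{L^1}\le\lVert a\rVert_{L^\infty}$, and by Lemma~\ref{ineq}\,(a) the embedding $A\mapsto k_A$ satisfies $\lVert k_A\rVert_{L^1}\le\lVert A\rVert_1$. For the inclusions into $\bbW$: if $t\in\Cal W_0$ is supported in $]0,b]$ with $M=\sup_\rho\lVert t_\rho\rVert_{L^\infty}$, then $\int_0^b\lVert t_\rho\rVert_{L^1}\,\rho\,d\rho\le M\int_0^b\rho\,d\rho=Mb^2/2<\infty$; and for $T\in\Cal B_1^\oplus$ one has $\int_0^b\lVert k_{T_\rho}\rVert_{L^1}\rho\,d\rho\le\int_0^\infty\lVert T_\rho\rVert_1\,\rho\,d\rho=\lVert T\rVert_1<\infty$. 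Bochner measurability into the \emph{separable} space $L^1(\T^2)$ follows from Pettis's theorem and the stated scalar measurability, even though the $L^\infty$-valued range of a $\Cal W_0$-element need not be separable; this is the one measurability point needing a word.

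The two weight comparisons dispose of the remaining inclusions. When $w(\rho)\le\rho$ the inclusions into $\bW$ follow from $\lVert t\rVert_{\bW}=\int\lVert t_\rho\rVert_{L^1}\,w\,d\rho\le\int\lVert t_\rho\rVert_{L^1}\,\rho\,d\rho$, which is finite for $t\in\Cal W_0$ and for $t\in\Cal B_1^\oplus$ exactly as above. The last claim is the same comparison run in reverse: fixing $b$ and putting $C_b=\sup_{0<\rho\le b}\rho/w(\rho)<\infty$, one gets $\int_0^b\lVert t_\rho\rVert_{L^1}\,\rho\,d\rho\le C_b\int_0^b\lVert t_\rho\rVert_{L^1}\,w(\rho)\,d\rho\le C_b\lVert t\rVert_{\bW}$, so $\bW\subseteq\bbW$.

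The genuine work is in the two density assertions, where the subtlety is the mismatch between the weight $\rho\,d\rho$ defining $\Cal B_1^\oplus$ and the weight $w\,d\rho$ defining $\bW$. In every case I would use the standard Bochner-space fact that simple functions $\sum_i\mathbf 1_{E_i}a_i$ (finitely many disjoint $E_i$ of finite measure, $a_i$ in the fibre) are dense, preceded by a bounded-support truncation justified by the vanishing of the tail $\int_b^\infty\lVert t_\rho\rVert\,w\,d\rho\to0$. This truncation is essential, not cosmetic: a set of finite $w\,d\rho$-measure may have infinite $\rho\,d\rho$-measure (e.g.\ for $w(\rho)=\rho/(1+\rho^2)$), so only after restricting to $]0,b]$ does an operator-valued simple function $\sum_i\mathbf 1_{E_i}A_i$ land in $\Cal B_1^\oplus$ (there $\int\lVert\cdot\rVert_1\rho\,d\rho\le\tfrac{b^2}2\sum_i\lVert A_i\rVert_1<\infty$). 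With bounded support secured I would approximate fibrewise using the density facts noted after Lemma~\ref{ineq}: for density of $\Cal B_1^\oplus$ in $\bW$ I replace each $a_i\in L^1(\T^2)$ by a kernel $k_{A_i}$ with $A_i\in\Cal S^1$ (density of $\Cal S^1$ in $L^1(\T^2)$); for density of $\Cal W_0$ in $\bW$ I replace each $a_i$ by an $L^\infty(\T^2)$ function; and for density of $\Cal W_0\cap\Cal B_1^\oplus$ in $\Cal B_1^\oplus$ I start from an operator-valued simple function and replace each $A_i\in\Cal S^1$ by one with bounded kernel (density of $L^\infty(\T^2)\cap\Cal S^1$ in $\Cal S^1$). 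Disjointness of the $E_i$ turns each error into $\sum_i\lVert a_i-a_i'\rVert\cdot(\text{measure of }E_i)$, which is small, while finiteness of the family keeps $\sup_\rho\lVert t_\rho\rVert_{L^\infty}$ bounded, so the approximant genuinely lies in $\Cal W_0$ (resp.\ $\Cal W_0\cap\Cal B_1^\oplus$).

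I expect the main obstacle to be bookkeeping rather than any deep point: for each of the four targets one must keep straight which weight and which fibre norm is in force, and in particular remember to truncate supports \emph{before} invoking the fibrewise density results, since the $\Cal B_1^\oplus$-membership of an approximant fails without it. Apart from the single measurability remark for $\Cal W_0$, everything else is routine estimation.
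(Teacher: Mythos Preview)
Your proposal is correct and follows essentially the same approach as the paper, which merely records the contractive embedding $\Cal B_1^\oplus\hookrightarrow L^1(]0,\infty[,\nu,L^1(\T^2))$ via Lemma\,\ref{ineq}(a) and then declares the remaining statements to be ``standard estimates and approximation techniques''. Your write-up supplies those standard details carefully, including the measurability remark for $\Cal W_0$ and the need to truncate supports before passing to simple functions; these are exactly the points the paper leaves implicit.
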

\begin{proof}
Considering $\Cal S^1$ as a subset of
$L^1(\T^2,\frac1{(2\pi)^2}d(\psi,\psi_1))$ defines (using Lemma \ref{ineq}(a))
a contractive embedding of $\Cal B_1^\oplus(\widehat{M(2)},\nu)$ into
$L^1(]0,\infty[,\nu,L^1(\T^2))$ and we will identify now $\Cal B_1^\oplus$
with this subspace of $L^1$-valued functions.
The other properties
use standard estimates and approximation techniques (see also the comment
after Lemma\,\ref{ineq}).
\end{proof}
\begin{Lem}	\label{convo}	
Let $f,g\!:\,]0,\infty[\to[0,\infty[$ be measurable functions such that
$f(\rho)=0$ for $\rho>b$. Then for all $\rho>0$
\newline
$\iint_{D_\rho}f(\rho_1)\,g(\rho_2)\,K(\rho_1,\rho_2,\rho)\,
\rho_1\rho_2\,d(\rho_1,\rho_2)\,\le\,\sup_{\rho_1}f(\rho_1)\
\int_{\max(\rho-b,0)}^{\rho+b}g(\rho_2)\rho_2\,d\rho_2$\,.
\end{Lem}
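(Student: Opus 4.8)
The plan is to bound $f$ by its supremum on its (bounded) support and then integrate out the variable $\rho_1$, the point being that the inner $\rho_1$-integral of $K$ against the weight $\rho_1\,d\rho_1$ equals $1$. Everything in sight is nonnegative, so Tonelli's theorem lets me iterate the integral freely; the only genuine input is a normalization identity for the Bessel--Kingman kernel $K$.

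First I would record the symmetry of $K$. A direct factorization gives $4\rho_1^2\rho_2^2-(\rho^2-\rho_1^2-\rho_2^2)^2=(\rho_1+\rho_2-\rho)(\rho_2+\rho-\rho_1)(\rho+\rho_1-\rho_2)(\rho_1+\rho_2+\rho)$, which by Heron's formula is sixteen times the squared area of the triangle with side lengths $\rho_1,\rho_2,\rho$. In particular this expression, hence $K(\rho_1,\rho_2,\rho)=\frac{2}{\pi\sqrt{\,\cdots\,}}$, is a symmetric function of the three arguments $(\rho_1,\rho_2,\rho)$. Next I would extract the normalization from material already at hand: the substitution $\rho=\rho(\mu)$ used in the proof of Corollary\,\ref{prrh} shows $\frac1{2\pi}d\mu=\frac12 K(\rho_1,\rho_2,\rho)\,\rho\,d\rho$ on $[0,\pi]$, so integrating over $\mu\in[0,\pi]$ yields $\int_{\lvert\rho_1-\rho_2\rvert}^{\rho_1+\rho_2}K(\rho_1,\rho_2,\rho)\,\rho\,d\rho=1$ for each fixed $\rho_1,\rho_2>0$ (this is the defining normalization of the Bessel--Kingman hypergroup, \cite{BH}). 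Transporting this identity through the symmetry above gives the form I actually need: for fixed $\rho_2,\rho>0$, $\int_{\lvert\rho-\rho_2\rvert}^{\rho+\rho_2}K(\rho_1,\rho_2,\rho)\,\rho_1\,d\rho_1=1$. Here I would note that the constraint $\lvert\rho_1-\rho_2\rvert<\rho<\rho_1+\rho_2$ defining $D_\rho$ is, for positive reals, exactly the triangle condition and hence equivalent to $\lvert\rho-\rho_2\rvert<\rho_1<\rho+\rho_2$.

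For the estimate itself, since $f(\rho_1)=0$ for $\rho_1>b$ the integrand is supported in $\{\rho_1\le b\}$, where $f(\rho_1)\le\sup_{\rho_1}f(\rho_1)$. Pulling this bound out and integrating in $\rho_1$ first (for fixed $\rho_2,\rho$), the inner integral runs over $\,]\lvert\rho-\rho_2\rvert,\rho+\rho_2[\,\cap\,]0,b]$; restricting to $\rho_1\le b$ only shrinks the domain of a nonnegative integrand, so it is bounded by $\int_{\lvert\rho-\rho_2\rvert}^{\rho+\rho_2}K(\rho_1,\rho_2,\rho)\,\rho_1\,d\rho_1=1$ from the previous step. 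It then remains to track for which $\rho_2$ this inner domain is nonempty: the intersection is nonempty precisely when $\lvert\rho-\rho_2\rvert<b$, i.e. $\max(\rho-b,0)<\rho_2<\rho+b$, and this produces exactly the stated limits on the remaining $\rho_2$-integral $\int_{\max(\rho-b,0)}^{\rho+b}g(\rho_2)\,\rho_2\,d\rho_2$.

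I expect the main obstacle to be Step two, namely pinning down the normalization $\int K\,\rho_1\,d\rho_1=1$ in the correct variable; reading it off the already-established change of variables $\rho=\rho(\mu)$ and then moving it to the $\rho_1$-integral via the full symmetry of $K$ is what makes the argument short. The remaining pieces — the application of Tonelli and the bookkeeping that turns the support condition $\rho_1\le b$ into the limits $\max(\rho-b,0)$ and $\rho+b$ — are routine.
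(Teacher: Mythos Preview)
Your argument is correct and follows essentially the same route as the paper: both rest on the fact that, for fixed $\rho,\rho_2$, the map $\rho_1\mapsto K(\rho_1,\rho_2,\rho)\,\rho_1$ is a probability density on $\,]\lvert\rho-\rho_2\rvert,\rho+\rho_2[\,$, then pull out $\sup f$ and read off the $\rho_2$-range from the support condition $\rho_1\le b$. The paper simply quotes this normalization from \cite{BH}, whereas you derive it internally from the change of variables in Corollary\,\ref{prrh} together with the Heron-type symmetry of $K$; that is a nice self-contained touch but not a different method.
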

\begin{proof}
See Proposition\,\ref{tenseu} for the definition of $D_\rho$\,.
This is a special case (called "straightforward" in \cite{BH}\,p.\,27) of
convolvability in the Bessel-Kingman hypergroup. More explicitely, one uses
that (for fixed $\rho,\rho_2$)
\,$\rho_1\mapsto K(\rho_1,\rho_2,\rho)\,\rho_1$ is a probability
distribution concentrated on $]\,\lvert\rho-\rho_2\rvert\,,\rho+\rho_2[$
and if $f$ vanishes outside $]0,b]$\,, then the domain of
integration in the left integral can be restricted to
\,$\max(\rho-b,0)<\rho_2<\rho+b$\,.
\end{proof}\noindent
Lemma\,\ref{convo} and Lemma\,\ref{ineq}(c) imply that if
$s\in\Cal W_0\,,\ t\in\bbW\,$, then
$(s\sharp t)_\rho$ can be defined for each $\rho>0$
by an integral as in Cor.\,\ref{dualc} to Prop.\,\ref{tenseu}, giving
a measurable function  $s\sharp t\!:\,]0,\infty[\to L^1(\T^2)$\,. This
extends the dual convolution of $\Cal B_1^\oplus$.
\begin{Lem}	\label{dcext}	   
\thetag{a}\ For $s\in \Cal W_0\,,\ t\in\bbW$\,, we have $s\sharp t\in\bbW$\,.
If $s_\rho=0$ for $\rho>b$ we have
$\lVert(s\sharp t)_\rho\rVert_{L^1}\le
2\sup_{\rho_1}\lVert s_{\rho_1}\rVert_{L^\infty}\
\int_0^{\rho+b}\lVert t_{\rho_2}\rVert_{L^1}\,\rho_2\,d\rho_2$ for each
$\rho>0$\,.
\item[(b)]
For $s,t\in\Cal W_0$\,, we have $s\sharp t\in\Cal W_0$\,.
\item[(c)]
For $s^{(1)},s^{(2)}\in\Cal W_0\cap\Cal B_1^\oplus,\ t\in\bbW$\,, we have
$(s^{(1)}\sharp s^{(2)})\,\sharp\,t=s^{(1)}\sharp\,(s^{(2)}\sharp\,t)$\,.
\item[(d)] For $n\in\Z\,,t\in\bbW$ we have ${\bf M}_n(t)\in\bbW$\,, \
$\Cal W_0,\bW$ are ${\bf M}_n$-invariant subspaces,
$\lVert{\bf M}_n(t)\rVert_{\bW}=\lVert t\rVert_{\bW}$\,, \ for
$s\in\Cal W_0\ \ s\,\sharp\,{\bf M}_n(t)={\bf M}_n(s\sharp t)$\,.
\end{Lem}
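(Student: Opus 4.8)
The plan is to read off all four parts from the integral representation of the dual convolution (Corollary \ref{dualc}), the pointwise norm estimates of Lemma \ref{ineq}, and the convolvability bound of Lemma \ref{convo}; the only genuinely non-formal point is the associativity in (c). For (a), recall that $(s\sharp t)_\rho$ is already available as the $L^1(\T^2)$-valued Bochner integral of Corollary \ref{dualc} (established in the paragraph preceding the lemma). I would pull $\lVert\cdot\rVert_{L^1}$ inside that integral, bound the integrand by Lemma \ref{ineq}(c) as $\lVert\theta(s_{\rho_1},t_{\rho_2})(\rho)\rVert_{L^1}\le 2\lVert s_{\rho_1}\rVert_{L^\infty}\lVert t_{\rho_2}\rVert_{L^1}$, and then apply Lemma \ref{convo} with $f(\rho_1)=\lVert s_{\rho_1}\rVert_{L^\infty}$ (which vanishes for $\rho_1>b$) and $g(\rho_2)=\lVert t_{\rho_2}\rVert_{L^1}$; this yields the displayed estimate. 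Integrating that estimate against $\rho\,d\rho$ over $]0,B[$ and pulling out the factor $\int_0^{B+b}\lVert t_{\rho_2}\rVert_{L^1}\rho_2\,d\rho_2$, which is finite since $t\in\bbW$, shows $\int_0^B\lVert(s\sharp t)_\rho\rVert_{L^1}\rho\,d\rho<\infty$, i.e. $s\sharp t\in\bbW$.

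For (b), membership in $\bbW$ is already given by (a) since $\Cal W_0\subseteq\bbW$; the bounded support of $s\sharp t$ follows from the locality of $\sharp$ noted after Corollary \ref{dualc} (if $s,t$ vanish for $\rho>b_1,b_2$ then $(s\sharp t)_\rho=0$ for $\rho>b_1+b_2$). To see $(s\sharp t)_\rho\in L^\infty(\T^2)$ with $\sup_\rho\lVert(s\sharp t)_\rho\rVert_{L^\infty}<\infty$, I would test against $h\in L^1(\T^2)$: pairing $h$ with the integrand and using Lemma \ref{ineq}(d) gives $\lvert\langle\theta(s_{\rho_1},t_{\rho_2})(\rho),h\rangle\rvert\le 2\lVert s_{\rho_1}\rVert_{L^\infty}\lVert t_{\rho_2}\rVert_{L^\infty}\lVert h\rVert_{L^1}$, and Lemma \ref{convo}, now with the (boundedly supported) $g(\rho_2)=\lVert t_{\rho_2}\rVert_{L^\infty}$, bounds the integral by a constant times $\lVert h\rVert_{L^1}$ uniformly in $\rho$. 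Thus $(s\sharp t)_\rho$ is a bounded functional on $L^1(\T^2)$, hence lies in $L^\infty$ with uniformly bounded norm, giving $s\sharp t\in\Cal W_0$. Part (d) is equally direct: from the kernel formula $k_{{\bf M}_n(a)}=\chi_n(\psi-\psi_1)\,k_a$ of Lemma \ref{integk} and $\lvert\chi_n\rvert=1$, the operators ${\bf M}_n$ preserve $\lVert\cdot\rVert_{L^1}$, $\lVert\cdot\rVert_{L^\infty}$ and supports, so ${\bf M}_n(t)\in\bbW$ and $\Cal W_0,\bW$ are ${\bf M}_n$-invariant with $\lVert{\bf M}_n(t)\rVert_{\bW}=\lVert t\rVert_{\bW}$; the intertwining $s\sharp{\bf M}_n(t)={\bf M}_n(s\sharp t)$ comes from the function-level identity $\theta(a_1,{\bf M}_n(a_2))(\rho)={\bf M}_n(\theta(a_1,a_2)(\rho))$ (the Corollary after Lemma \ref{integk}), substituted into the integral of Corollary \ref{dualc} and with the bounded operator ${\bf M}_n$ pulled out of the Bochner integral.

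For (c), the hard part, the dual convolution $\sharp$ is associative on $\Cal B_1^\oplus$ because $\Cal F$ intertwines it with pointwise multiplication in $A(M(2))$; the task is to extend this to $t\in\bbW$, and I would do so by continuity, locality, and approximation. By (a), for fixed $s\in\Cal W_0$ the map $t\mapsto s\sharp t$ is continuous on $\bbW$ in each of the seminorms $t\mapsto\int_0^B\lVert t_\rho\rVert_{L^1}\rho\,d\rho$. Moreover, the domain $D_\rho$ forces $\rho_2\in\,]\max(\rho-b,0),\rho+b[\,$ whenever $s_{\rho_1}=0$ for $\rho_1>b$, so $(s\sharp t)_\rho$ depends only on the values $t_{\rho_2}$ with $\rho_2<\rho+b$. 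Writing $b_1,b_2$ for the supports of $s^{(1)},s^{(2)}$, both sides of the asserted identity, restricted to $]0,B[$, therefore depend only on $t$ restricted to $]0,B'[$ with $B'=B+b_1+b_2$. On this bounded interval I would approximate $t$ in the local $L^1$-seminorm by a sequence $t^{(k)}\in\Cal B_1^\oplus$: since $\Cal S^1$ is dense in $L^1(\T^2)$ and $t$ is Bochner-integrable for $\rho\,d\rho$ on $]0,B'[$, boundedly supported $\Cal S^1$-valued simple functions suffice. For each $t^{(k)}$ associativity holds in $\Cal B_1^\oplus$, and letting $k\to\infty$ both sides converge on $]0,B[$ by the continuity just noted — applied to $s^{(1)}\sharp s^{(2)}\in\Cal W_0$ on the left, and successively to $s^{(2)}$ and then $s^{(1)}$ on the right. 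Since $B$ is arbitrary, the identity follows.

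I expect the main obstacle to be the bookkeeping in (c): making the locality precise enough to replace $t\in\bbW$ by $\Cal B_1^\oplus$-approximants on bounded $\rho$-intervals, and checking that the two \emph{nested} convolutions on the right-hand side converge simultaneously under the approximation. A direct Fubini computation is the alternative, but it would require a coassociativity identity for $\theta$ (reflecting $(\pi_{\rho_1}\otimes\pi_{\rho_2})\otimes\pi_{\rho_3}\simeq\pi_{\rho_1}\otimes(\pi_{\rho_2}\otimes\pi_{\rho_3})$) together with the associativity of the Bessel--Kingman kernel $K$, which I expect to be considerably messier than the approximation route.
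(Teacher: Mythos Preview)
Your proposal is correct and follows essentially the same route as the paper's proof: (a) from Lemma~\ref{ineq}(c) and Lemma~\ref{convo}, (b) from Lemma~\ref{ineq}(d), (d) from the Corollary to Lemma~\ref{integk}, and (c) by density of $\Cal B_1^\oplus$ in $\bbW$ for the local seminorms $\lVert t\rVert_{b'}=\int_0^{b'}\lVert t_\rho\rVert_{L^1}\rho\,d\rho$ together with the continuity of $t\mapsto s\sharp t$ between these seminormed spaces. The paper phrases the ``nested'' step in (c) a bit more cleanly by noting that $t\mapsto s^{(2)}\sharp t$ is continuous $(\bbW,\lVert\ \rVert_{b'+b})\to(\bbW,\lVert\ \rVert_{b'})$, which makes the two-fold limit transparent, but this is exactly the mechanism you describe.
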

\begin{proof}
(a) follows from Lemma\,\ref{convo} and Lemma\,\ref{ineq}(c). Similarly,
Lemma\,\ref{ineq}(d) is used for (b). We know that (c) holds for
$t\in\Cal B_1^\oplus$. By (b) there exists $b>0$ such that
$s^{(1)}_\rho\,,\,s^{(2)}_\rho\,,\,(s^{(1)}\sharp s^{(2)})_\rho\,=0$ for
$\rho>b$.
Consider on $\bbW$ the seminorms
$\lVert t\rVert_{b'}=
\int_0^{b'}\lVert t_{\rho_2}\rVert_{L^1}\,\rho_2\,d\rho_2$\,. 
If $\rho>0$ is fixed, $b'\ge \rho+b$, it follows from (a) that
$t\mapsto(s^{(2)}\sharp\,t)_\rho$ is continuous for $\lVert\ \rVert_{b'}$.
Then for each $b'>0$, we get that $t\mapsto s^{(2)}\sharp\,t$ is
a continuous mapping
$(\bbW,\lVert\ \rVert_{b'+b})\to(\bbW,\lVert\ \rVert_{b'})$.
As in Lemma\,\ref{wsub}, $\Cal B_1^\oplus$ is dense in
$(\bbW,\lVert\ \rVert_{b'})$ for each $b'>0$.
It follows by approximation that (c) holds pointwise (for
each $\rho>0$) for general $t\in\bbW$\,.
\\
The last statement of (d) follows from the Corollary to Lemma\,\ref{integk},
the others are easy.
\end{proof}\noindent
From now we assume that $w(\rho)\le\rho$ for all $\rho>0$ and that
$\frac{\rho}{w(\rho)}$ is bounded on $]0,b]$ for each $b>0$
\,(see Lemma\,\ref{wsub}). We define
\begin{multline*}
\Cal W=\{t\in\bW\!: \ s\sharp t\in\bW \text{ for all }
s\in \Cal W_0\cap\Cal B_1^\oplus \text{ \ and there exists }c\ge0
\\ \text{ such that }
\lVert s\sharp t\rVert_{\bW}\le c\,\lVert s\rVert_{\Cal B_1^\oplus}
\text{ for all } s\in\Cal W_0\cap\Cal B_1^\oplus\,\}\qquad\text{and}
\end{multline*}
\hspace{2mm}$\lVert t\rVert_{\Cal W}=
\sup\,\{\lVert \alpha t+s\sharp t\rVert_{\bW}:\ \alpha\ge0,
\ s\in\Cal W_0\cap\Cal B_1^\oplus,\
\alpha+\lVert s\rVert_{\Cal B_1^\oplus}\le1\}$ \;(see also Remark\,(c) below).
\begin{Lem}	\label{banm}	
Dual convolution $\sharp$ extends to an action of
$\Cal B_1^\oplus(\widehat{M(2)})$ on $\Cal W$\,. $\Cal W$ is a
Banach $\Cal B_1^\oplus$-module and ${\bf M}_n$-invariant,
$\lVert{\bf M}_n(t)\rVert_{\Cal W}=\lVert t\rVert_{\Cal W}\
\;(n\in\Z,\:t\in\Cal W)$\,.
\end{Lem}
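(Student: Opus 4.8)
The plan is to derive everything from two immediate consequences of the definition of $\lVert\,\cdot\,\rVert_{\Cal W}$: the choice $\alpha=1,\,s=0$ gives $\lVert t\rVert_{\bW}\le\lVert t\rVert_{\Cal W}$, while $\alpha=0$ together with homogeneity gives the multiplier estimate $\lVert s\sharp t\rVert_{\bW}\le\lVert s\rVert_{\Cal B_1^\oplus}\lVert t\rVert_{\Cal W}$ for $s\in\Cal W_0\cap\Cal B_1^\oplus,\ t\in\Cal W$. Since $\Cal W_0\cap\Cal B_1^\oplus$ is dense in $\Cal B_1^\oplus$ (Lemma\,\ref{wsub}) and $\bW$ is complete, for fixed $t\in\Cal W$ the map $s\mapsto s\sharp t$ then extends uniquely to a bounded linear map $\Cal B_1^\oplus\to\bW$ of norm $\le\lVert t\rVert_{\Cal W}$, agreeing with the old operation on $\Cal W_0\cap\Cal B_1^\oplus$; this defines $s\sharp t\in\bW$ for all $s\in\Cal B_1^\oplus$, and linearity in $t$ passes to the limit. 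Recall that $\Cal B_1^\oplus$ is a commutative Banach algebra under $\sharp$ (being isometrically $\Cal F(A(M(2)))$) and that $\Cal W\subseteq\bW\subseteq\bbW$ under the running hypotheses on $w$, so that Lemma\,\ref{dcext} applies to elements of $\Cal W$.

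The first real step is the module identity $(s'\sharp s)\sharp t=s'\sharp(s\sharp t)$, proved first for $s'\in\Cal W_0\cap\Cal B_1^\oplus,\ s\in\Cal B_1^\oplus,\ t\in\Cal W$. Approximating $s$ by $s_n\in\Cal W_0\cap\Cal B_1^\oplus$ in $\Cal B_1^\oplus$, Lemma\,\ref{dcext}(c) gives $(s'\sharp s_n)\sharp t=s'\sharp(s_n\sharp t)$ for every $n$ (legitimate, since $s'\sharp s_n\in\Cal W_0\cap\Cal B_1^\oplus$ by Lemma\,\ref{dcext}(b) and $t\in\bbW$). On the right-hand side $s'\sharp s_n\to s'\sharp s$ in $\Cal B_1^\oplus$, so $(s'\sharp s_n)\sharp t\to(s'\sharp s)\sharp t$ in $\bW$; on the left-hand side $s_n\sharp t\to s\sharp t$ in $\bW$, hence in each seminorm $\lVert\,\cdot\,\rVert_{b'}$ (because $\rho/w(\rho)$ is bounded on each $]0,b']$), and $s'\sharp(\,\cdot\,)$ is continuous $(\bbW,\lVert\,\cdot\,\rVert_{b'+b})\to(\bbW,\lVert\,\cdot\,\rVert_{b'})$ by the estimate of Lemma\,\ref{dcext}(a). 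Matching the two limits in each seminorm yields the identity; in particular $s'\sharp(s\sharp t)=(s'\sharp s)\sharp t\in\bW$ with $\lVert s'\sharp(s\sharp t)\rVert_{\bW}\le\lVert s'\rVert_{\Cal B_1^\oplus}\lVert s\rVert_{\Cal B_1^\oplus}\lVert t\rVert_{\Cal W}$, which shows $s\sharp t\in\Cal W$. A further approximation of $s'$ by elements of $\Cal W_0\cap\Cal B_1^\oplus$ upgrades the identity to all $s',s\in\Cal B_1^\oplus$, giving the module axiom.

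The crux, and the step I expect to be the main obstacle, is the contraction $\lVert s\sharp t\rVert_{\Cal W}\le\lVert s\rVert_{\Cal B_1^\oplus}\lVert t\rVert_{\Cal W}$. For $s\in\Cal W_0\cap\Cal B_1^\oplus$ it follows at once from the algebraic identity $\alpha(s\sharp t)+s'\sharp(s\sharp t)=(\alpha s+s'\sharp s)\sharp t$, the bound $\lVert\alpha s+s'\sharp s\rVert_{\Cal B_1^\oplus}\le(\alpha+\lVert s'\rVert_{\Cal B_1^\oplus})\lVert s\rVert_{\Cal B_1^\oplus}\le\lVert s\rVert_{\Cal B_1^\oplus}$, and the multiplier estimate of the first paragraph. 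For general $s\in\Cal B_1^\oplus$ I would approximate by $s_n$; the combination $\alpha(s_n\sharp t)+s'\sharp(s_n\sharp t)$ converges to $\alpha(s\sharp t)+s'\sharp(s\sharp t)$ only in the seminorms $\lVert\,\cdot\,\rVert_{b'}$, not in $\lVert\,\cdot\,\rVert_{\bW}$. The difficulty is precisely to recover a genuine $\bW$-bound from this weaker convergence, and I would settle it by lower semicontinuity of the $\bW$-norm: seminorm convergence forces, along a subsequence, a.e.\ convergence of the fibrewise $L^1(\T^2)$-norms on each bounded $\rho$-interval, so Fatou gives $\int_0^{b'}\lVert\,\cdot\,\rVert_{L^1}\,w(\rho)\,d\rho\le\liminf_n\lVert\alpha(s_n\sharp t)+s'\sharp(s_n\sharp t)\rVert_{\bW}\le\lVert s\rVert_{\Cal B_1^\oplus}\lVert t\rVert_{\Cal W}$; letting $b'\to\infty$ and taking the supremum over admissible $(\alpha,s')$ yields the contraction.

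It remains to check completeness and the ${\bf M}_n$-statement. A $\Cal W$-Cauchy sequence $(t_n)$ is $\bW$-Cauchy, hence has a $\bW$-limit $t$; for each $s'\in\Cal W_0\cap\Cal B_1^\oplus$ the sequence $s'\sharp t_n$ is $\bW$-Cauchy with limit agreeing with $s'\sharp t$ in the seminorms $\lVert\,\cdot\,\rVert_{b'}$, which gives the uniform bound placing $t\in\Cal W$; letting $m\to\infty$ in $\lVert\alpha(t_n-t_m)+s'\sharp(t_n-t_m)\rVert_{\bW}\le\lVert t_n-t_m\rVert_{\Cal W}$ then shows $t_n\to t$ in $\Cal W$. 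Combined with the previous two paragraphs, the action is bilinear, associative and contractive, so $\Cal W$ is a Banach $\Cal B_1^\oplus$-module. Finally, the covariance $s'\sharp{\bf M}_n(t)={\bf M}_n(s'\sharp t)$ of Lemma\,\ref{dcext}(d) gives $\alpha{\bf M}_n(t)+s'\sharp{\bf M}_n(t)={\bf M}_n(\alpha t+s'\sharp t)$, and since ${\bf M}_n$ is a $\bW$-isometry the two suprema defining $\lVert{\bf M}_n(t)\rVert_{\Cal W}$ and $\lVert t\rVert_{\Cal W}$ coincide; thus $\Cal W$ is ${\bf M}_n$-invariant and ${\bf M}_n$ acts isometrically.
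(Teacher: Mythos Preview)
Your argument is correct and close in spirit to the paper's, but the organization differs and you add an unnecessary detour. The paper proceeds in the order: basic inequalities $\Rightarrow$ completeness of $\Cal W$ $\Rightarrow$ contraction $\lVert s\sharp t\rVert_{\Cal W}\le\lVert s\rVert_{\Cal B_1^\oplus}\lVert t\rVert_{\Cal W}$ for $s\in\Cal W_0\cap\Cal B_1^\oplus$ (directly from Lemma\,\ref{dcext}(c)) $\Rightarrow$ extension to all $s\in\Cal B_1^\oplus$ \emph{as a map into the complete space $\Cal W$} by density. This last step buys the general contraction and associativity for free, with no further estimates. You instead extend first as a map into $\bW$, then prove associativity, then recover the contraction by a Fatou/lower-semicontinuity argument.

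The Fatou step is superfluous: once you have, from your second paragraph, the identity $s'\sharp(s\sharp t)=(s'\sharp s)\sharp t$ for $s'\in\Cal W_0\cap\Cal B_1^\oplus$ and \emph{all} $s\in\Cal B_1^\oplus$, combine it with linearity $(\alpha s)\sharp t=\alpha(s\sharp t)$ to get $\alpha(s\sharp t)+s'\sharp(s\sharp t)=(\alpha s+s'\sharp s)\sharp t$, and then apply the multiplier estimate of your first paragraph (already extended to all of $\Cal B_1^\oplus$) to $\alpha s+s'\sharp s\in\Cal B_1^\oplus$ with $\lVert\alpha s+s'\sharp s\rVert_{\Cal B_1^\oplus}\le\lVert s\rVert_{\Cal B_1^\oplus}$. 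That yields $\lVert s\sharp t\rVert_{\Cal W}\le\lVert s\rVert_{\Cal B_1^\oplus}\lVert t\rVert_{\Cal W}$ directly, without passing through a.e.\ subsequences. Your completeness and ${\bf M}_n$ arguments match the paper's.
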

\begin{proof}
It follows immediately from the definition that $\Cal W$ is a subspace of
$\bW$\,, $\lVert\ \rVert_{\Cal W}$ a norm,
$\lVert t\rVert_{\bW}\le\lVert t\rVert_{\Cal W}$\,,
$\lVert s\sharp t\rVert_{\bW}\le
\lVert s\rVert_{\Cal B_1^\oplus}\,\lVert t\rVert_{\Cal W}$ \,for
$s\in\Cal W_0\cap\Cal B_1^\oplus\,,\ t\in\Cal W$\,. Let $(t^{(n)})$
be a sequence in $\Cal W$ with $\lVert t^{(n)}\rVert_{\Cal W}\le1$ for
all $n$ and converging in
$\bW\,,\ t^{(0)}=\bW\text{-}\lim_{n\to\infty}t^{(n)}$\,.
Let $s\in\Cal W_0\cap\Cal B_1^\oplus,\ b'>0$ be fixed. It follows from
Lemma\,\ref{dcext}(a) (use that
$\frac{\rho}{w(\rho)}$ is bounded on bounded intervals) that there
exists $c>0$ (depending only on $s,b',w$) such that
$\int_0^{b'}\lVert(s\sharp t)_{\rho}\rVert_{L^1}\,w(\rho)\,d\rho\le
c\,\lVert t\rVert_{\bW}$ \,for all $t\in\bW$\,. This gives
$\int_0^{b'}\lVert(\,s\sharp(t^{(n)}-t^{(0)})\,)_{\rho}\rVert_{L^1}
\,w(\rho)\,d\rho\,\to\,0$ for $n\to\infty$\,, hence
$\int_0^{b'}\lVert(s\sharp t^{(0)})_{\rho}\rVert_{L^1}\,w(\rho)\,d\rho
\linebreak\le\lVert s\rVert_{\Cal B_1^\oplus}$\,. It follows that
$\lVert s\sharp t^{(0)}\rVert_{\bW}\le\lVert s\rVert_{\Cal B_1^\oplus}$ for
all $s\in\Cal W_0\cap\Cal B_1^\oplus$, thus $t^{(0)}\in\Cal W$\,.
If, in addition, $(t^{(n)})$ is a Cauchy sequence in $\Cal W$\,, we get
similarly that $t^{(n)}\to t^{(0)}$ in $\Cal W$\,, hence $\Cal W$ is
complete.
\\
It follows easily from Lemma\,\ref{dcext}(c) that $s\sharp t\in\Cal W$
for $s\in\Cal W_0\cap\Cal B_1^\oplus\,,\ t\in\Cal W$ and
$\lVert s\sharp t\rVert_{\Cal W}\le
\lVert s\rVert_{\Cal B_1^\oplus}\,\lVert t\rVert_{\Cal W}$\,.
By density (Lemma\,\ref{wsub}) $\sharp$ extends to $s\in\Cal B_1^\oplus$
and $\Cal W$ becomes a Banach $\Cal B_1^\oplus$-module. Concerning ${\bf M}_n$
the properties follow from Lemma\,\ref{dcext}(d).
\end{proof}\noindent
$\Cal W$ is the maximal $\Cal B_1^\oplus$-submodule of $\bW$
(for $\sharp$). By Lemma\,\ref{wsub}, $\Cal B_1^\oplus\subseteq\Cal W$
but the abstract definition does not give more information about the size of
$\Cal W$ \,(which depends also on $w$).
\begin{Pro}	 \label{wspec}	
For \,$w(\rho)=\dfrac\rho{1+\rho^2}$\,, we have
\;$L^1(]0,\infty[,d\rho,\Cal S_1(\Cal H))\subseteq\Cal W$\,.
\end{Pro}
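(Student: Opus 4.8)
The plan is to verify directly the two conditions defining membership in $\Cal W$: that $t\in\bW$, and that there is a constant $c$ with $\lVert s\sharp t\rVert_{\bW}\le c\,\lVert s\rVert_{\Cal B_1^\oplus}$ for every $s\in\Cal W_0\cap\Cal B_1^\oplus$. Let $t=(t_\rho)$ with $\int_0^\infty\lVert t_\rho\rVert_1\,d\rho<\infty$. Since $w(\rho)=\frac\rho{1+\rho^2}\le\frac12$, Lemma\,\ref{ineq}(a) gives $\lVert t\rVert_{\bW}=\int_0^\infty\lVert t_\rho\rVert_{L^1}\,w(\rho)\,d\rho\le\frac12\int_0^\infty\lVert t_\rho\rVert_1\,d\rho<\infty$, so $t\in\bW$; the same estimate with the extra factor $\rho$ shows $\int_0^b\lVert t_\rho\rVert_{L^1}\,\rho\,d\rho<\infty$ for each $b$, so $t\in\bbW$ and the extended convolution $s\sharp t$ of Lemma\,\ref{dcext} is available.

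Now fix $s\in\Cal W_0\cap\Cal B_1^\oplus$; then $s_{\rho_1}\in\Cal S_1(\Cal H)$ and $t_{\rho_2}\in\Cal S_1(\Cal H)$, so Lemma\,\ref{ineq}(b) yields $\lVert\theta(s_{\rho_1},t_{\rho_2})(\rho)\rVert_{L^1}\le2\lVert s_{\rho_1}\rVert_1\lVert t_{\rho_2}\rVert_1$. Inserting this into the integral formula of Corollary\,\ref{dualc} (the factor $2$ cancelling the $\tfrac12$ there), taking $L^1(\T^2)$-norms under the integral, integrating against $w(\rho)\,d\rho$, and interchanging the order of integration by Fubini's theorem (the integrand being nonnegative) gives
\[
\lVert s\sharp t\rVert_{\bW}\le\int_0^\infty\lVert s_{\rho_1}\rVert_1\,\rho_1\,\Big[\int_0^\infty\lVert t_{\rho_2}\rVert_1\,\rho_2\,I(\rho_1,\rho_2)\,d\rho_2\Big]\,d\rho_1,
\]
where $I(\rho_1,\rho_2)=\int_{\lvert\rho_1-\rho_2\rvert}^{\rho_1+\rho_2}K(\rho_1,\rho_2,\rho)\,w(\rho)\,d\rho$. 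Writing $c(\rho_1)$ for the bracketed quantity, it then suffices to bound $c(\rho_1)$ uniformly in $\rho_1$, for then $\lVert s\sharp t\rVert_{\bW}\le\big(\sup_{\rho_1}c(\rho_1)\big)\,\lVert s\rVert_{\Cal B_1^\oplus}$.

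The heart of the matter is the evaluation of $I(\rho_1,\rho_2)$. By the change of variables $\frac1{2\pi}d\mu=\frac12K(\rho_1,\rho_2,\rho)\,\rho\,d\rho$ underlying Corollary\,\ref{prrh} (summed over the two monotone branches $\mu\in[0,\pi]$ and $\mu\in[\pi,2\pi]$, on which $\rho(2\pi-\mu)=\rho(\mu)$), one has $\int_{\lvert\rho_1-\rho_2\rvert}^{\rho_1+\rho_2}h(\rho)\,K\rho\,d\rho=\frac1{2\pi}\int_0^{2\pi}h(\rho(\mu))\,d\mu$ with $\rho(\mu)^2=\rho_1^2+\rho_2^2+2\rho_1\rho_2\cos\mu$. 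Since $w(\rho)=\rho\cdot\frac1{1+\rho^2}$, applying this with $h(\rho)=\frac1{1+\rho^2}$ and evaluating by the classical formula $\frac1{2\pi}\int_0^{2\pi}\frac{d\mu}{a+b\cos\mu}=(a^2-b^2)^{-1/2}$ (with $a=1+\rho_1^2+\rho_2^2$, $b=2\rho_1\rho_2$, so that $a^2-b^2=(1+(\rho_1-\rho_2)^2)(1+(\rho_1+\rho_2)^2)$) gives
\[
I(\rho_1,\rho_2)=\frac1{\sqrt{(1+(\rho_1-\rho_2)^2)\,(1+(\rho_1+\rho_2)^2)}}.
\]
The decay factor $(1+(\rho_1+\rho_2)^2)^{-1/2}$ is exactly what absorbs the Plancherel weight: from $\rho_2\le\rho_1+\rho_2$ one gets $\rho_2\,(1+(\rho_1+\rho_2)^2)^{-1/2}\le1$, whence $\rho_2\,I(\rho_1,\rho_2)\le(1+(\rho_1-\rho_2)^2)^{-1/2}\le1$ for all $\rho_1,\rho_2>0$. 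Therefore $c(\rho_1)\le\int_0^\infty\lVert t_{\rho_2}\rVert_1\,d\rho_2$ uniformly in $\rho_1$, so $\lVert s\sharp t\rVert_{\bW}\le\big(\int_0^\infty\lVert t_\rho\rVert_1\,d\rho\big)\,\lVert s\rVert_{\Cal B_1^\oplus}$ and $t\in\Cal W$.

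I expect the main obstacle to be precisely this sharp evaluation of $I(\rho_1,\rho_2)$. A crude estimate such as $w\le\frac12$, or $\frac1{1+\rho^2}\le\frac1{1+(\rho_1-\rho_2)^2}$ using $\rho\ge\lvert\rho_1-\rho_2\rvert$, only leaves $\rho_2\,(1+(\rho_1-\rho_2)^2)^{-1}$ in the integrand, and this is \emph{not} uniformly bounded after integrating an arbitrary $t\in L^1(d\rho,\Cal S_1)$: a sequence of narrow spikes of $\lVert t_{\rho_2}\rVert_1$ located near $\rho_2=\rho_1$ with heights of order $\rho_2$ already forces $\sup_{\rho_1}c(\rho_1)=\infty$. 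One genuinely needs the extra $(\rho_1+\rho_2)^{-1}$-type decay produced by the exact integral, and it is here that the particular choice $w(\rho)=\frac\rho{1+\rho^2}$ enters; all the remaining ingredients ($\bW$- and $\bbW$-membership, Fubini, and the pointwise kernel bound from Lemma\,\ref{ineq}(b)) are routine consequences of the earlier lemmas.
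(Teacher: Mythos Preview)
Your proof is correct and follows essentially the same route as the paper's own argument: both reduce to the exact evaluation of $I(\rho_1,\rho_2)=\int_{\lvert\rho_1-\rho_2\rvert}^{\rho_1+\rho_2}K(\rho_1,\rho_2,\rho)\,w(\rho)\,d\rho$ via the change of variables back to $\mu$, obtain the closed form $\bigl((1+(\rho_1-\rho_2)^2)(1+(\rho_1+\rho_2)^2)\bigr)^{-1/2}$, and bound this by $1/\rho_2$ to absorb the Plancherel weight. Your additional verification that $t\in\bW\cap\bbW$ and your closing remarks on why a crude bound fails are useful annotations, but the substance of the argument is the same as the paper's.
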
\noindent
Since $\widehat D=(\pi_\rho(D))$ maps $\Cal B_1^\oplus(\widehat{M(2)},\nu)$ to
$L^1(]0,\infty[,d\rho,\Cal S_1(\Cal H))$, we have found the desired
$\Cal B_1^\oplus$-module containing the image of the derivation. This will
finish our proof that $A(M(2))$ is not weakly amenable.
\begin{proof}
By direct evaluation, 
\;${\displaystyle\int}_{\!\!\!\lvert\rho_1-\rho_2\rvert}^{\rho_1+\rho_2}
K(\rho_1,\rho_2,\rho)\,w(\rho)\,d\rho=
\frac1\pi{\displaystyle\int}_{\!\!\!0}^\pi\dfrac1{1+\rho^2}\,d\mu=\linebreak
\dfrac1{\sqrt{(1+(\rho_1-\rho_2)^2)(1+(\rho_1+\rho_2)^2)}}\;\le\,
\dfrac1{\rho_2}$\,. \ Since
\\[2mm]
$\lVert (s\sharp t)_\rho\rVert_{L^1}\le\;
\iint_{D_\rho}\frac12\,
\lVert \theta(s_{\rho_1},t_{\rho_2})(\rho)\rVert_{L^1}
\,K(\rho_1,\rho_2,\rho)\,
\rho_1\rho_2\,d(\rho_1,\rho_2)\ $
\\[1mm]\hspace*{2cm} $\le\;
 \iint_{D_\rho} \lVert s_{\rho_1}\rVert_1\,\lVert t_{\rho_2}\rVert_1
\,K(\rho_1,\rho_2,\rho)\,
\rho_1\rho_2\,d(\rho_1,\rho_2)$, \ this gives
\\[.5mm]
$\int_0^\infty\lVert (s\sharp t)_\rho\rVert_{L^1}w(\rho)\,d\rho\le
\int_0^\infty\int_0^\infty
\lVert s_{\rho_1}\rVert_1\,\lVert t_{\rho_2}\rVert_1\,\rho_1\,d(\rho_1,\rho_2)
\le\lVert s\rVert_{\Cal B_1^\oplus}\,\int_0^\infty
\lVert t_{\rho_2}\rVert_1\,d\rho_2$\,.
\end{proof}
\begin{Rems}
(a) By additional estimates, one can show that more generally the conclusion
of Proposition\,\ref{wspec} holds for a (positive, measurable) weight function
$w$ on $]0,\infty[$\,, if $w$ is bounded and
$\int_0^\infty\frac{w(\rho)}\rho\;d\rho<\infty$\,. These conditions are
necessary as well. For the extension of $\sharp$ we needed that
$\frac\rho{w(\rho)}$ is bounded on every bounded interval $]0,b],\ b>0$.
If one wants that $\Cal B_1^\oplus\subseteq\Cal W$\,, then $\frac{w(\rho)}\rho$
should be bounded (on $]0,\infty[$) as well.
\item[(b)] Further derivations $\Cal B_1^\oplus\to\Cal W$ are obtained if
the distribution $d$ is
defined using some other direction from the normal subgroup $\R^2$. But,
again this does not work if $d$ involves a direction from the non-normal
factor.
\item[(c)] We used the unitization of $\Cal B_1^\oplus$ to define the norm on
$\Cal W$\,. One can show (using an appropriate approximate unit for
$\Cal B_1^\oplus$) that it  is also given by
$\lVert t\rVert_{\Cal W}=\sup\,\{\lVert s\sharp t\rVert_{\bW}:
\ s\in\Cal W_0\cap\Cal B_1^\oplus,\ \lVert s\rVert_{\Cal B_1^\oplus}\le1\}$\,.
\end{Rems}
\begin{Ex}
We consider the trigonometric basis $\chi_n(\psi)=e^{in\psi}\ \,(n\in\Z)$
of $\Cal H$\,. For $m,n\in\Z$ there are one dimensional operators
$A_{(m,n)}h=(h\vert\chi_n)\,\chi_m \;(h\in\Cal H)$. Then
$A_{(m,n)}\in\Cal S_1(\Cal H)$ with
$k_{A_{(m,n)}}(\psi,\psi_1)=\chi_m(\psi)\,\overline{\chi_n(\psi_1)}$\,.
For the corresponding coefficients
$u(x)=\tr(A_{(m,n)}\,\pi_{\rho}(x)^*)\ \,(x\in M(2))$ one gets
$u(\alpha x\alpha')=u(x)\,\chi_m(\alpha)\,\chi_n(\alpha')$ for
$\alpha,\alpha'\in\T$ and
$u(v)=e^{i(m-n)\phi}J_{n-m}(\rho r)$ for
$v=r\,\binom{\cos\phi}{\sin\phi}\in\R^2$ where $J_{n-m}$ denotes the
classical (J-) Bessel functions (see also \cite{V}\,p.\,206\,(9)\,). Then
Lemma\,\ref{integk} implies
$\theta_0(A_{(m_1,n_1)},A_{(m_2,n_2)},\rho_1,\rho_2)(\mu)=
\overline{k_{A_{(m_1,n_1)}}(\varphi,\varphi)}\,
k_{A_{(m_2,n_2)}}(\mu-\varphi,\mu-\varphi)\,A_{(m_1+m_2,n_1+n_2)}$
and $\theta(\dots)(\mu)=
2\Re\bigl(\overline{k_{A_{(m_1,n_1)}}(\varphi,\varphi)}\,
k_{A_{(m_2,n_2)}}(\mu-\varphi,\mu-\varphi)\bigr)\,A_{(m_1+m_2,n_1+n_2)}$
\;(this is an example where $\theta(A_1,A_2)(\mu)$ is one dimensional).
For $m_1=m_2=0\,,\ n=n_1+n_2$ one gets from Cor.\,\ref{prrh} to
Prop.\,\ref{tenseu} the multiplication formula
\[J_{n-n_2}(\rho_1r)J_{n_2}(\rho_2r)=
\frac2\pi\,\int_{\lvert\rho_1-\rho_2\rvert}^{\rho_1+\rho_2}
\frac{\cos(n\varphi-n_2\mu)\,\rho\,J_n(\rho r)}
{\sqrt{4\rho_1^2\rho_2^2-(\rho^2-\rho_1^2-\rho_2^2)^2}}\;d\rho
\]
for $r,\rho_1,\rho_2>0\,,\ n,n_2\in\Z$ with
$\mu=\arccos(\frac{\rho^2-\rho_1^2-\rho_2^2}{2\rho_1\rho_2})\,,\
\varphi=\arccos(\frac{\rho_1+\rho_2\cos\mu}\rho)$
\;(taking the principal branches of $\arccos$\,, i.e. $\mu,\varphi\in[0,\pi]$;
in fact the formula extends to $r\in\R$\,). Observe that there is an error in
\cite{V}, the corresponding formula (3) in \cite{V}\,p.\,210 is not correct
for all $m,n$\,.
\\
For $m_j=n_j=0$ (this corresponds to the ``radial" part of
$A(M(2))\,,\,B(M(2))$, i.e. thefunctions that are both left and right
$\T$-periodic) one gets the multiplication of the Bessel-Kingman hypergroup,
\cite{BH}\,p.\,235 with $\alpha=0$.
\end{Ex}
\vspace{.5mm}
\section{The groups $M(2)_{\Cal K}$ and $\widetilde{M(2)}$}\vspace{1mm}
\label{euclK}
$\widetilde{M(2)}$ is covered by \cite{LLSS}\,Th.\,1.6  (using
spectral synthesis). $M(2)_{\Cal K}$ was the main open case for our Theorem
(see \cite{LLSS}\,Ex.\,3.4).
\\
We take up the notations of Section\,\ref{mini},
$M(2)_{\Cal K}=\R^2\rtimes\Cal K$\,, $\Cal K$ is a non-trivial
compact group and we have given a continuous homomorphism
$\varphi_0\!\!:\R\to \Cal K$ with dense image, $\Cal K$ is written additively
and there exists a proper closed subgroup $\Cal K_0$ of  $\Cal K$ such that
$\varphi_0(2\pi\Z)=\Cal K_0\cap\varphi_0(\R)$\,. Then
$\Cal K/\Cal K_0\cong\T\ (=\R/2\pi\Z)$ giving a continuous surjective
homomorphism \,$\alpha_{\Cal K}\!:\Cal K\to\T$ with
$\ker\alpha_{\Cal K}=\Cal K_0\,,\ \alpha_{\Cal K}\circ\varphi_0$ is the
canonical projection $\R\to\T$\,. The action of $\Cal K$ on $\R^2$ is
given by $k\circ v=A_{\alpha_{\Cal K}(k)}v\ \;(k\in\Cal K\,,\ v\in\R^2)$,
using the rotations $A_\alpha$ from Section\,\ref{euclm}. Then
$\alpha_{\Cal K}$ extends to a continuous surjective homomorphism
$M(2)_{\Cal K}\to M(2)$\,, again denoted by $\alpha_{\Cal K}$\,, with
$\ker\alpha_{\Cal K}=\Cal K_0$\,. For Haar
measure we take $\dfrac1{2\pi}\,dv\,dk$ \,(with normalized Haar measure on
$\Cal K$\,). $\He_{\Cal K}$ is unimodular.
As in Section\,\ref{heisK}, the dual group
$\Gamma=\widehat{\Cal K}$ is considered as a subgroup of $\R$ in such a
way that for $t\in\Gamma$ the corresponding character $\chi_t$ of $\Cal K$
satisfies $\chi_t(\varphi_0(z))=e^{itz}$ for $z\in\R$ \,(see Rem.\,(b) in
Section\,\ref{mini} and Rem.\,(c) in Appendix\,\ref{dense}). Under
this identification $\Cal K_0^\perp=\Z$\, (hence $\Z\subseteq\Gamma$).
Conversely, for any subgroup $\Gamma$ of $\R$ with $\Gamma\supset\Z$\,,
$\Cal K=\widehat\Gamma$ gives a compact group satifying the conditions
above (with $\varphi_0$ the dual of the inclusion $\Gamma\to\R$\,, i.e.
restricting the characters of $\R$ to $\Gamma$\,; $\Cal K_0=\Z^\perp$;
\,$\alpha_{\Cal K}$\,: restricting the characters of $\Gamma$ to $\Z$\,).
\\[1mm]
$\widetilde{M(2)}=\R^2\rtimes\R$ (universal covering group of $M(2)$).
For Haar measure we take $\dfrac1{(2\pi)^2}\,dv\,d\alpha$ \,(now
$\alpha\in\R$).
$\widetilde{M(2)}$ is unimodular. $\varphi_0\!\!:\R\to \Cal K$ extends
to a continuous homomorphism $\varphi\!:\widetilde{M(2)}\to M(2)_{\Cal K}$
with dense image. In particular for $\Cal K=\T=\R/2\pi\Z$ \,($\varphi_0$ the
quotient mapping), $\varphi_1\!:\widetilde{M(2)}\to M(2)$ gives the
covering homomorphism, for general
$\Cal K\ \;\varphi_1=\alpha_{\Cal K}\circ\varphi$\,.
The representations $\pi_\rho$ of $M(2)$ (Section\,\ref{euclm}) lift
to $\widetilde{M(2)}$ and
$M(2)_{\Cal K}$\,, we denote these again by $\pi_\rho\ \,(\rho>0)$.
$\R^2$ is the commutator subgroup of $\widetilde{M(2)}$\,, hence the
one dimensional representations of $\widetilde{M(2)}$ are given by
the characters $\{\chi_t:t\in\R\}$ of $\R\cong\widetilde{M(2)}/\R^2$.
$2\pi\Z\;(\subseteq\R)$ is the centre of $\widetilde{M(2)}$\,. If
$\pi$ is an irreducible representation of $\widetilde{M(2)}$ its
restriction to $2\pi\Z$ is given by a character which extends to a
character $\chi_t$ of $\R$ (for some $t\in\R$). Then
$\overline{\chi_t}\otimes\pi$ is trivial on $2\pi\Z$\,, hence it induces
an irreducible representation of $M(2)\cong\widetilde{M(2)}/(2\pi\Z)$.
If $\pi$ is infinite dimensional, it follows that
$\overline{\chi_t}\otimes\pi\simeq\pi_\rho$\,, hence
$\pi\simeq\chi_t\otimes\pi_\rho$ for some $\rho>0$\,. Put
$\Cal R=[0,1[$\,. $\chi_t$ and $\chi_{t+n}\ (n\in\Z)$ coincide on
$2\pi\Z$\,, hence we may choose $t\in\Cal R$\,. We put
$\brho=(t,\rho)$ and use the notation
$\pi_{\brho}=\chi_t\otimes\pi_\rho$ \,(considered again as a representation on
$\Cal H$\,, see the proof of Proposition\,\ref{tenseu}). It is not hard to see that
for $\brho\neq\brho'\in\Cal R\times]0,\infty[$ the representations
$\pi_{\brho},\pi_{\brho'}$ are not equivalent. Thus we get
\[
\Bigl(\widetilde{M(2)}\Bigr)\sphat=
\{\chi_t:t\in\R\}\cup\{\chi_t\otimes\pi_\rho:\,0\le t<1,\ \rho>0\}
\]\noindent
(as a complete set of representatives; one might get this also from
Mackey's theory). One can show that $\{\pi_\brho\}$
is open and (taking on $\Cal R$ the quotient topology from $\R/\Z$) it carries
the product topology of $\Cal R\times]0,\infty[$\,. On $\{\chi_t\}$ one gets
the standard topology of $\R$\,,
\;$\overline{\{\chi_t\otimes\pi_\rho\}}=
\{\chi_t\otimes\pi_\rho\}\cup\{\chi_{t+n}: n\in\Z\}$ for each
$\rho>0\,,\ t\in\Cal R$\,.
\\[2mm]
For $M(2)_{\Cal K}$ the centre is $\Cal K_0$\,. Let $\Cal R_\Gamma$ be a set
of representatives for the cosets in $\Gamma/\Z$ \,(e.g.
$\Cal R_\Gamma=\Gamma\cap[0,1[$). Then a similar argument as above gives
\[
(M(2)_{\Cal K})\sphat=
\{\chi_t:t\in\Gamma\}\cup
\{\chi_t\otimes\pi_\rho:\,t\in\Cal R_\Gamma\,,\ \rho>0\}
\]\noindent
On the open subset $\{\pi_\brho\}$ one has the product topology of
$\Cal R_\Gamma\times]0,\infty[$ (taking the discrete topology on
$\Cal R_\Gamma$) and on $\{\chi_t\}$ the discrete topology. The closure
$\overline{\{\chi_t\otimes\pi_\rho\}}$ is as above.
\begin{Lem}   \label{pleuclK}	   
\thetag{a}\ Using Haar measure and parametrization as above, the Plancherel
measure
$\bnu$ of $\widetilde{M(2)}$ is concentrated on
$\{\chi_t\otimes\pi_\rho:\,0\le t<1,\ \rho>0\}$. It is the product of
standard Lebesgue measure on $[0,1[$ and 
\,$d\nu=\rho\,d\rho$ on $]0,\infty[$\,.\vspace{1mm}
\item[(b)]
Plancherel measure
$\bnu$ of $M(2)_{\Cal K}$ is concentrated on
$\{\chi_t\otimes\pi_\rho:\,t\in\Cal R_\Gamma\,,\ \rho>0\}$. It is the product
of counting measure on $\Cal R_\Gamma$ and 
\,$d\nu=\rho\,d\rho$ on $]0,\infty[$\,.
\end{Lem}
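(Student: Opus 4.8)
The plan is to verify directly the abstract Plancherel formula for these unimodular type\,I groups, namely $\lVert f\rVert_2^2=\int_{\widehat G}\lVert\pi(f)\rVert_2^2\,d\bnu(\pi)$ (Hilbert--Schmidt norms, with no Duflo--Moore correction since $G$ is unimodular), and to read off $\bnu$ by computing the right-hand side for the proposed measure and matching it with the left-hand side on a dense class of $f$ (say $f\in C_c$). Since the one-dimensional representations $\{\chi_t\}$ correspond to the single fixed point $0\in\widehat{\R^2}$ of the rotation action, whereas the $\pi_{\brho}$ arise from the generic orbits (circles of radius $\rho>0$), the characters form a $\bnu$-null stratum and it suffices to integrate over $\{\pi_{\brho}\}$; this is why $\bnu$ is concentrated there. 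Throughout I use the explicit models of $\pi_\rho$ on $\Cal H=L^2(\T,\frac1{2\pi}d\psi)$ from Section\,\ref{euclm}.

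For $\widetilde{M(2)}=\R^2\rtimes\R$ I would, for $f\in C_c(\widetilde{M(2)})$, realize $\pi_{\brho}(f)$ (with $\pi_{\brho}=\chi_t\otimes\pi_\rho$) as an integral operator on $L^2(\T)$. Substituting $\alpha=\psi-\psi'$ in the $\R$-integration and folding $\psi'\in\R$ into $\T$ via $\psi'=\psi_0'+2\pi n\ (n\in\Z)$ — legitimate because the vectors are $2\pi$-periodic — yields the kernel $k_{\brho}(\psi,\psi')=\frac1{2\pi}\sum_{n\in\Z}e^{it(\psi-\psi'-2\pi n)}\int_{\R^2}f(v,\psi-\psi'-2\pi n)\,e^{i\rho r\cos(\phi-\psi)}\,dv$ (polar $v=r(\cos\phi,\sin\phi)$). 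I then integrate $\lVert\pi_{\brho}(f)\rVert_2^2=\frac1{(2\pi)^2}\iint_{\T^2}\lvert k_{\brho}\rvert^2\,d\psi\,d\psi'$ against the conjectured measure $dt\,\rho\,d\rho$. Two orthogonality mechanisms do the work: integrating first in $t$ over $[0,1[$ and using $\int_0^1 e^{2\pi i t m}\,dt=\delta_{m,0}$ annihilates the off-diagonal terms of the double sum in $\lvert k_{\brho}\rvert^2$, collapsing it to a single sum over $n$ — this is exactly what produces Lebesgue measure in the variable $t$ — while the surviving $(n,\psi')$-summation and integration restore the full $\R$-integration in $\alpha$; writing $w=\rho(\cos\psi,\sin\psi)$ one recognizes the inner $v$-integral as the $\R^2$-Fourier transform of the slice $f(\cdot,\alpha)$ at $-w$, so that $\int_\T\!\int_0^\infty\lvert\cdots\rvert^2\,\rho\,d\rho\,d\psi$ is the Euclidean Plancherel identity on $\R^2$ in polar coordinates. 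Collecting the normalizing constants gives exactly $\lVert f\rVert_2^2=\frac1{(2\pi)^2}\int\lvert f\rvert^2\,dv\,d\alpha$, which proves\,(a).

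For $M(2)_{\Cal K}=\R^2\rtimes\Cal K$ the same scheme applies with the compact fibre $\Cal K_0=\ker\alpha_{\Cal K}$ playing the role of the centre $2\pi\Z$. The integration of $k$ over $\Cal K$ factors through $\alpha_{\Cal K}\!:\Cal K\to\T$ (fibres being cosets of $\Cal K_0$), and the inner integral over $\Cal K_0$ against the character $\chi_t\vert_{\Cal K_0}$ supplies the orthogonality relation replacing $\int_0^1 e^{2\pi i t m}\,dt$. Since $\Cal K_0$ is compact abelian, its dual is $\widehat{\Cal K_0}\cong\Gamma/\Z$ (using $\Cal K_0^\perp=\Z$) and its Plancherel measure is counting measure, which is exactly the counting measure on $\Cal R_\Gamma$ in the statement. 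The remaining $\T$-quotient integration together with the polar $\R^2$-Plancherel again contributes $\rho\,d\rho$ and the correct constant, giving\,(b).

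The main difficulty is bookkeeping rather than conceptual: tracking the normalizing factors through the folding and the passage to polar coordinates, and — when $\Cal K$ is non-metrizable ($\Gamma$ uncountable) — the fact that $\bnu$ is then not $\sigma$-finite. In that case the classical Plancherel theorem does not apply verbatim and one must invoke the general (non-separable) version, as already noted after Lemma\,\ref{plheisK} and in Appendix\,\ref{AppenB}, Rem.\,(d); for $f\in C_c$ only countably many $t\in\Cal R_\Gamma$ actually contribute, so the identity can be established on a $\sigma$-finite reduction and then extended by density.
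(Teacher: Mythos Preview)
Your approach is correct and matches the paper's own (one-line) proof, which simply says ``this follows by direct computations, using the Plancherel theorem for $\R^2,\R$ and $\Cal K$''; you have supplied precisely those computations, with the $\R^2$-Plancherel in polar coordinates giving $\rho\,d\rho$ and the orthogonality in $t$ (resp.\ over $\Cal K_0$) giving Lebesgue (resp.\ counting) measure on $\Cal R$ (resp.\ $\Cal R_\Gamma$). Your remark on the non-$\sigma$-finite case also echoes the paper's comment after the Lemma.
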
\noindent
If $\Cal K$ is not metrizable (equivalently, if $\Gamma$ is uncountable)
(b) gives again an example of the Plancherel
theorem for a group that is non-compact, $\sigma$-compact, but not second
countable (see also Rem.\,(d) in Appendix\,\ref{AppenB}).
\begin{proof}
Again, this follows by direct computations (compare Lemma\,\ref{plheisK}),
using the Plancherel theorem for $\R^2,\R$ and $\Cal K$\,.
\end{proof}\noindent
The distribution $d$ and the corresponding derivation $D$ can be defined as
in Section\,\ref{euclm}. $\widetilde{M(2)}$ is a Lie group, 
$d$ and $D$ are defined on $C^1(\widetilde{M(2)})$. For $M(2)_{\Cal K}$
we can use approximation by Lie groups as in Section\,\ref{heisK},
$d$ and $D$ are defined on $\bigcup\{C^1(M(2)_{\Cal K/K_1})\}$ ranging over
the closed subgroups $K_1$ of $\Cal K_0$ such that $\Cal K_0/K_1$ is Lie
(equivalently, $K_1^\perp$ is finitely generated and $K_1^\perp\supseteq\Z$).
We have $\chi_t(d)=0$ and for $\brho=(t,\rho)$ this gives
$\pi_\brho(d)=\pi_\rho(d)$ where again
$(\pi_\rho(d)g)(\psi)=-i\rho\cos(\psi)\,g(\psi)$
\,(for $g\in\Cal H=L^2(\R,\frac1{2\pi}d\psi)$\,).
In both cases $D$ does {\it not} produce a bounded mapping
$A\to\VN$\,. We will next investigate the dual convolution and then
use the construction for $M(2)$ to find a suitable $\Cal B_1^\oplus$-module
$\Cal W$\,.
\\[3mm]
First, we consider now $M(2)_{\Cal K}$\,. We assume that
$\Cal R_\Gamma=\Gamma\cap[0,1[$\,. We start with the decomposition of tensor
products making a similar procedure as in Section\,\ref{euclm}.
For $t\in\R\,,\ [t]$ denotes the integer
part, $\{t\}$ the fractional part. For
$\brho_j=(t_j,\rho_j)\in\Cal R_\Gamma\times]0,\infty[\ \,(j=1,2)$ we define
$(W_{\!\brho_1,\brho_2}f)(\mu)=
M_{-[t_1+t_2]}\circ((W_{\!\rho_1,\rho_2}f)(\mu))$\linebreak
\;($W_{\!\brho_1,\brho_2}\!:\Cal H\otimes_2\Cal H\to
L^2(\T,\frac1{2\pi}d\mu,\Cal H)$\,). With the notations made before
Proposition\,\ref{tenseu} we get
\begin{Pro} \label{tenseuK}  
For $\brho_j=(t_j,\rho_j)\in\Cal R_\Gamma\times]0,\infty[\ \,(j=1,2),\
x\in M(2)_{\Cal K}$\,, we have\vspace{-1.5mm}
\begin{gather*}
W_{\!\brho_1,\brho_2}\circ(\pi_{\brho_1}\otimes\pi_{\brho_2})(x)
\circ W_{\!\brho_1,\brho_2}^*=
\int_\T^\oplus\pi_{(\{t_1+t_2\},\rho)}(x)\,d\mu\ .
\end{gather*}
\end{Pro}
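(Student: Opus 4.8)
The plan is to reduce the assertion to Proposition \ref{tenseu}, which already handles the purely $M(2)$ tensor product $\pi_{\rho_1}\otimes\pi_{\rho_2}$, by peeling off the one dimensional factors $\chi_{t_j}$ and absorbing the resulting integer shift into the twist $M_{-[t_1+t_2]}$ built into $W_{\!\brho_1,\brho_2}$. Since the representations $\pi_\rho$ and the characters $\chi_n\ (n\in\Z)$ lift from $M(2)$ through $\alpha_{\Cal K}$, both formulas of Proposition \ref{tenseu} transfer verbatim to $M(2)_{\Cal K}$ upon replacing $x\in M(2)$ by $\alpha_{\Cal K}(x)$.

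First I would observe that each $\chi_{t_j}$ is a genuine character of $M(2)_{\Cal K}$, so under the identification $\C\otimes\Cal H\cong\Cal H$ one has $\pi_{\brho_j}(x)=\chi_{t_j}(x)\,\pi_{\rho_j}(x)$ as operators on $\Cal H$. As characters multiply, $(\pi_{\brho_1}\otimes\pi_{\brho_2})(x)=\chi_{t_1+t_2}(x)\,\bigl(\pi_{\rho_1}(x)\otimes\pi_{\rho_2}(x)\bigr)$, and since the scalar $\chi_{t_1+t_2}(x)$ commutes with $W_{\!\rho_1,\rho_2}$, the first formula of Proposition \ref{tenseu} yields $W_{\!\rho_1,\rho_2}\circ(\pi_{\brho_1}\otimes\pi_{\brho_2})(x)\circ W_{\!\rho_1,\rho_2}^*=\int_\T^\oplus\chi_{t_1+t_2}(x)\,\pi_\rho(x)\,d\mu$.

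Next I would write $t_1+t_2=n+s$ with $n=[t_1+t_2]\in\{0,1\}\subseteq\Z$ and $s=\{t_1+t_2\}\in\Cal R_\Gamma$, so that $\chi_{t_1+t_2}(x)\,\pi_\rho(x)=\chi_n(x)\,\pi_{(s,\rho)}(x)$. The second formula of Proposition \ref{tenseu}, together with the fact that the scalar $\chi_s(x)$ commutes with $M_n$, gives $\chi_n(x)\,\pi_{(s,\rho)}(x)=M_n\,\pi_{(s,\rho)}(x)\,M_n^*$. Because $W_{\!\brho_1,\brho_2}$ differs from $W_{\!\rho_1,\rho_2}$ precisely by the fiberwise application of $M_{-n}=M_n^*$, conjugating the decomposable operator $\int_\T^\oplus M_n\,\pi_{(s,\rho)}(x)\,M_n^*\,d\mu$ by this fiberwise $M_{-n}$ cancels both copies of $M_n$ and leaves $\int_\T^\oplus\pi_{(\{t_1+t_2\},\rho)}(x)\,d\mu$, the desired identity.

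The only point requiring care is the passage of the constant fiberwise multiplier $M_{-n}$ through the direct integral $\int_\T^\oplus$, which is immediate from the definition of decomposable operators since conjugation acts fiber by fiber. As for Proposition \ref{tenseu}, the formula then extends to the representations integrated against $f\in L^1(M(2)_{\Cal K})$.
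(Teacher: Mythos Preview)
Your argument is correct and follows exactly the route indicated in the paper's proof: factor out $\chi_{t_1+t_2}(x)$ to reduce to Proposition~\ref{tenseu}, then split $t_1+t_2=[t_1+t_2]+\{t_1+t_2\}$ and use the second formula of Proposition~\ref{tenseu} together with the fiberwise $M_{-[t_1+t_2]}$ built into $W_{\!\brho_1,\brho_2}$. You have simply spelled out in full the details the paper leaves implicit.
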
\noindent
\begin{proof}
Remembering our identifications, we have
\,$(\pi_{\brho_1}\otimes\pi_{\brho_2})(x)=
\chi_{t_1+t_2}(x){\ \;}\linebreak
(\pi_{\rho_1}\otimes\pi_{\rho_2})(x)$\,. Using
$t=[t]+\{t\}$\,, the result can be obtained from Proposition\,\ref{tenseu}.
\end{proof}\noindent
We put
$\btheta_0(A_1,A_2,\brho_1,\brho_2)=
\pr_{\Cal S}(W_{\!\brho_1,\brho_2}\circ(A_1\otimes A_2)
\circ W_{\!\brho_1,\brho_2}^*)$
and
$\btheta(A_1,A_2)(\rho)=\btheta_0(A_1,A_2)(\mu)+\btheta_0(A_1,A_2)(2\pi-\mu)\
\ (A_1,A_2\in\Cal S^1(\Cal H)\,)$.
\setcounter{Cor1}{0}
\begin{Cor1}	 \label{prKrh}	   
For
$A_1,A_2\in\Cal S^1(\Cal H),\
\brho_j=(t_j,\rho_j)\in\Cal R_\Gamma\times]0,\infty[$ consider coefficients
\,$u_j(x)=\tr(A_j\,\pi_{\brho_j}(x)^*)\ \ (j=1,2,\ x\in M(2)_{\Cal K})$. Then
\;$\btheta(A_1,A_2,\brho_1,\brho_2)
={\bf M}_{-[t_1+t_2]}(\theta(A_1,A_2,\rho_1,\rho_2))\ \in
L^1(\,[\lvert\rho_1-\rho_2\rvert,\rho_1+\rho_2]\,,
\frac12K(\rho_1,\rho_2,\rho)\,\rho\,d\rho\,,\Cal S^1(\Cal H)\,)$ \\
\  and
\[ u_1(x)\,u_2(x)=
\,\int_{\lvert\rho_1-\rho_2\rvert}^{\rho_1+\rho_2}
\frac12\,\tr\bigl(\,\pi_{(\{t_1+t_2\},\rho)}(x)^*\,
\btheta(A_1,A_2)(\rho)\,\bigr)
\,K(\rho_1,\rho_2,\rho)\,d\nu(\rho)\,.
\]
\end{Cor1}\noindent
In particular,
\;$\pi_{\brho_1}\otimes\pi_{\brho_2}\simeq
\,2\,\int_{\lvert\rho_1-\rho_2\rvert}^{\rho_1+\rho_2}
\pi_{(\{t_1+t_2\},\rho)}\,d\rho$\,. Similarly, there is an
analogue of Corollary\,\ref{prmu} to Proposition\,\ref{tenseu} in terms
of $\btheta_0$\,. 
\begin{proof}
An expression for $\btheta_0$ and then for $\btheta$ can be obtained as
in Lemma\,\ref{integk}. For the product formula compare the
Corollaries\,\ref{prmu},\ref{prrh} to Proposition\,\ref{tenseu}, using
now Proposition\,\ref{tenseuK}.
\end{proof}\noindent
For an operator field
$T=(T_\brho)\in\Cal B_1^\oplus((M(2)_{\Cal K})\sphat\,,\bnu)$ and
$t\in\Cal R_\Gamma$ we put $T^{(t)}=(T_{(t,\rho)})_{\rho>0}$\,.
Then \,$T^{(t)}\in\Cal B_1^\oplus(\widehat{M(2)} ,\nu)$ \ and
$\lVert T\rVert_{\Cal B_1^\oplus}=
\sum_{t\in\Cal R_\Gamma}\lVert T^{(t)}\rVert_{\Cal B_1^\oplus}$\,.
Thus we get an isomorphism
$\Cal B_1^\oplus((M(2)_{\Cal K})\sphat\,,\bnu)\,\cong\,
l^1(\Cal R_\Gamma\,,\Cal B_1^\oplus(\widehat{M(2)},\nu)\,)$.
\begin{Cor1}	 \label{dualcK}	   
For
\;$S,T\in \Cal B_1^\oplus((M(2)_{\Cal K})\sphat\,,\bnu),\
t\in\Cal R_\Gamma$\,, we have
\[
(S\sharp T)^{(t)}=\sum_{t_1\in\Cal R_\Gamma}
{\bf M}_{[t-t_1]}(S^{(t_1)}\sharp T^{(\{t-t_1\})})
\]
\end{Cor1}\noindent
Thus one gets a sort of twisted convolution.
\begin{proof}
This follows from Corollary\,\ref{dualc} to Proposition\,\ref{tenseu} and
Corollary\,\ref{prKrh} above.
\end{proof}\noindent
Let $\Cal W(M(2))$ be the Banach module defined before Lemma\,\ref{banm}
with $w$ as in Proposition\,\ref{wspec} (see also Remark (a) in
Section;\ref{euclm}). We consider now
\[
\Cal W=l^1(\Cal R_\Gamma\,,\Cal W(M(2))\,)
=\{(w^{(t)}): \;w^{(t)}\in \Cal W(M(2)),\
\sum_{t\in\Cal R_\Gamma}\lVert w^{(t)}\rVert_{\Cal W}<\infty\}\,.
\]
\begin{Pro} \label{wK}  
We have \,$\Cal B_1^\oplus((M(2)_{\Cal K})\sphat\,,\bnu)\subseteq\Cal W$ and
\,$l^1(\Cal R_\Gamma\,,L^1(]0,\infty[\,,d\rho\,,\linebreak\Cal S_1(\Cal H))\,)
\subseteq\Cal W$\,. Dual convolution extends to an action of $\Cal B_1^\oplus$
on $\Cal W$\,. $\Cal W$ is a Banach\linebreak $\Cal B_1^\oplus$-module.
\end{Pro}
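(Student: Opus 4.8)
The plan is to reduce everything to the corresponding facts for $M(2)$, using the two identifications $\Cal B_1^\oplus((M(2)_{\Cal K})\sphat\,,\bnu)\cong l^1(\Cal R_\Gamma\,,\Cal B_1^\oplus(\widehat{M(2)},\nu))$ and $\Cal W=l^1(\Cal R_\Gamma\,,\Cal W(M(2)))$. With these the two inclusions are immediate: $\Cal B_1^\oplus(\widehat{M(2)})\subseteq\Cal W(M(2))$ by Lemma\,\ref{wsub} and $L^1(]0,\infty[,d\rho,\Cal S_1(\Cal H))\subseteq\Cal W(M(2))$ by Proposition\,\ref{wspec}, so forming $l^1$-sums over $\Cal R_\Gamma$ yields both claimed inclusions at once. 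It remains to produce the module action and check the Banach-module axioms.

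For $S\in\Cal B_1^\oplus((M(2)_{\Cal K})\sphat)$ and $t=(t^{(s)})_{s\in\Cal R_\Gamma}\in\Cal W$ I would define the action by the twisted-convolution formula dictated by Corollary\,\ref{dualcK},
\[
(S\sharp t)^{(s)}=\sum_{t_1\in\Cal R_\Gamma}{\bf M}_{[s-t_1]}\bigl(S^{(t_1)}\sharp t^{(\{s-t_1\})}\bigr)\,,
\]
the inner $\sharp$ being the module action of $\Cal B_1^\oplus(\widehat{M(2)})$ on $\Cal W(M(2))$ from Lemma\,\ref{banm}. Because each ${\bf M}_n$ is an isometry of $\Cal W(M(2))$ and that action is contractive (both in Lemma\,\ref{banm}), one has $\lVert(S\sharp t)^{(s)}\rVert_{\Cal W}\le\sum_{t_1}\lVert S^{(t_1)}\rVert_{\Cal B_1^\oplus}\lVert t^{(\{s-t_1\})}\rVert_{\Cal W}$; for fixed $t_1$ the map $s\mapsto\{s-t_1\}$ is a bijection of $\Cal R_\Gamma$ (translation by $-t_1$ in $\Gamma/\Z$ read on the representatives), so summing over $s$ yields the Young-type estimate $\lVert S\sharp t\rVert_{\Cal W}\le\lVert S\rVert_{\Cal B_1^\oplus}\lVert t\rVert_{\Cal W}$. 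Hence the series converges, $S\sharp t\in\Cal W$, and $\Cal W$ is complete as an $l^1$-sum of the complete space $\Cal W(M(2))$. That this action agrees with $\sharp$ on $\Cal B_1^\oplus((M(2)_{\Cal K})\sphat)$ is exactly Corollary\,\ref{dualcK}.

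The substantive step is associativity, $(S\sharp S')\sharp t=S\sharp(S'\sharp t)$, and this is where I expect the real work: one must push all the ${\bf M}_n$-twists through the convolution. The tools are that ${\bf M}_n$ can be extracted from either factor of a dual convolution (the Corollary to Lemma\,\ref{integk}, extended to the module action on $\Cal W(M(2))$), that ${\bf M}_m{\bf M}_n={\bf M}_{m+n}$, and that the action on $\Cal W(M(2))$ is itself associative (Lemma\,\ref{banm}, resting on Lemma\,\ref{dcext}(c)). Expanding both sides and relabelling the inner index by $t_1=\{t_2+t_3\}$, the two triple sums match term by term once one checks the integer- and fractional-part cocycle identities $[s-\{t_2+t_3\}]+[\{t_2+t_3\}-t_2]=[s-t_2-t_3]=[s-t_2]+[\{s-t_2\}-t_3]$ and $\{s-\{t_2+t_3\}\}=\{s-t_2-t_3\}=\{\{s-t_2\}-t_3\}$, which follow directly from the definitions of $[\,\cdot\,]$ and $\{\,\cdot\,\}$ on $\Gamma$. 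This bookkeeping with the twists is the only genuine obstacle; combined with the contractivity, completeness and extension property above it shows that $\sharp$ makes $\Cal W$ a Banach $\Cal B_1^\oplus$-module.
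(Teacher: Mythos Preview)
Your proposal is correct and follows essentially the same route as the paper's proof, which is a one-line appeal to Lemma\,\ref{wsub}, Proposition\,\ref{wspec}, Lemma\,\ref{banm}, Corollary\,\ref{dualcK} and Lemma\,\ref{dcext}(d); you simply fill in the details the paper suppresses, in particular the Young-type norm estimate and the cocycle verification for associativity. One small remark: the Corollary to Lemma\,\ref{integk} and Lemma\,\ref{dcext}(d) only state $s\sharp{\bf M}_n(t)={\bf M}_n(s\sharp t)$, i.e.\ extraction from the \emph{second} factor, whereas your associativity calculation also pulls ${\bf M}_n$ out of the first factor---this is fine (by commutativity of $\sharp$ on $\Cal B_1^\oplus(\widehat{M(2)})$, or directly from the kernel formula in Lemma\,\ref{integk}, which is symmetric in the two arguments), but you might want to make that step explicit.
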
\noindent
This finishes our proof that $A(M(2)_{\Cal K})$ is not weakly amenable.
\begin{proof}
This follows from the properties of $\Cal W(M(2))$ obtained in
Lemma\,\ref{banm}, Proposition\,\ref{wspec} and Lemma\,\ref{wsub}, and
using Corollary\,\ref{dualcK} above and Lemma\,\ref{dcext}\,(d) to
get the extension of $\sharp$\,.\vspace{1mm}
\end{proof}\noindent
The construction works quite similarly for $\widetilde{M(2)}$\,.
With $\Cal R=[0,1[$\,, Proposition\,\ref{tenseuK} holds in the same way
for $\brho_j\in\Cal R\times]0,\infty[\,,\
x\in\widetilde{M(2)}$ and also Corollary\,\ref{prKrh}. As above,
\;$\Cal B_1^\oplus(\,\bigl(\widetilde{M(2)}\bigr)\sphat\,,\bnu)\cong
L^1([0,1[,dt,\Cal B_1^\oplus(\widehat{M(2)},\nu)\,)$ by transforming operator
fields. Similarly, in Corollary\,\ref{dualcK} one gets
$(S\sharp T)^{(t)}=\int_{\Cal R}
{\bf M}_{[t-t_1]}(S^{(t_1)}\sharp T^{(\{t-t_1\})})\,dt_1$\,.
Finally, one can take \,$\Cal W=L^1(\Cal R\,,dt,\Cal W(M(2))\,)$\,.
\begin{Rems}
The formulas given above depend on the choice of $\Cal R_\Gamma$\,.
If $\Cal R_\Gamma\;(\subseteq\Gamma)$ is an arbitrary set of representatives
for $\Gamma/\Z$ there is a section $\sigma\!:\Gamma\to\Cal R_\Gamma$ and
a corresonding cocycle $n(t_1,t_2)=\sigma(t_1)+\sigma(t_2)-\sigma(t_1+t_2)\
(\in\Z \text{ for }t_1,t_2\in\Gamma)$. Then in the definition of
$W_{\!\brho_1,\brho_2}$ one has to replace $[t_1+t_2]$ by $n(t_1,t_2)$
and in Proposition\,\ref{tenseuK} replace $\{t_1+t_2\}$ by $\sigma(t_1+t_2)$,
the same in Corollary\,\ref{prKrh}. In Corollary\,\ref{dualcK} one replaces
$\{t-t_1\}$ by $\sigma(t-t_1)$ and $[t-t_1]$ by $-n(t_1,t-t_1)$.
\end{Rems}
\section{The Gr\'elaud groups $\G_a$}\vspace{1mm} \label{gr}
They were treated in \cite{LLSS}\,sec.\,2.3, using spectral synthesis.
We present also (slightly more briefly) our method for this case.
\\
We use the complex presentation $\G_a=\C\rtimes\R$ with the action
$t\circ z=e^{wt}z \linebreak(t\in\R,\,z\in\C)$,
putting $w=a+i$ for $a\neq0$\,.
It is not hard to see that $\G_{-a}\cong\G_a$ hence we will always
assume that $a>0$ (similarly for the version used in \cite{FL}\,4.4,
$\G_{-a}\cong G_\theta$ with $\theta=-\frac1a$). Using the homeomorphism with
$\C\times\R$ given by
$(z,t)\mapsto zt$ we take for (left) Haar measure $e^{-2at}dz\,dt$\,.
$\G_a$ is not unimodular, $\Delta(zt)=e^{-2at}$.
\\
The irreducible representations can again be obtained by Mackey's theory.
We take here $\Cal H=L^2(\R)$ (as in Section\,\ref{heis}). For real $b\neq0$
an irreducible representation $\pi_b$ is defined for $f\in\Cal H$ by
\,$(\pi_b(z)f)(\psi)=\exp{(ib\,\Re(e^{-w\psi}z))}f(\psi)\,,\
(\pi_b(t)f)(\psi)=f(\psi-t)\ \linebreak(\Re$ denoting the real part;
in the notaion of \cite{FL} this corresonds to $\pi_\mu$ with $\mu=-b$).
For $t=2\pi\in\R\,,\ \;\pi_b(t)\pi_b(\bx)\pi_b(-t)=
\pi_{be^{2\pi a}}(\bx)\ \,(\bx\in\G_a)$ gives
$\pi_b\simeq\pi_{b\,e^{2\pi a}}$ \,and for $t=\pi\,,\ 
\,\pi_b(t)\pi_b(\bx)\pi_b(-t)=\pi_{-be^{\pi a}}(\bx)$ implies
$\pi_b\simeq\pi_{-b\,e^{\pi a}}$\,. It follows that one can restrict to
$b\in[1,e^{2\pi a}[$ and in this domain one can show that the representations
are pairwise non-equivalent, they exhaust (up to equivalence) all the infinite
dimensional irreducible representations of $\G_a$\,.
$\{\pi_b:b\in[1,e^{2\pi a}[\,\}$ defines an open, compact (but non-closed)
subset of $\widehat{\G_a}$ \;(again, the complement consists of the one
dimensional representations, they are given by $\widehat{\R}$\,). It
carries the quotient topology of $[1,e^{2\pi a}]$\,, obtained by
identifying the end points. Formulas for closures are as in
Section\,\ref{euclm}.
\\
$\G_a$ is type I, Plancherel measure is concentrated on
$\{\pi_b:b\in[1,e^{2\pi a}[\,\}$. Put $(Kf)(\psi)=e^{-2a\psi}f(\psi)$ (densely
defined on $\{f\in\Cal H:\,Kf\in\Cal H\}$\,). $K$ is positive definite,
self-adjoint and satisfies the semi-invariance relations for all $\pi_b$\,.
Hence it gives Duflo--Moore operators (independent of $b$\,, see
Appendix\,\ref{AppenB}). The corresponding Plancherel measure is given
by $d\nu=\frac1{(2\pi)^2}b\,db$ \,(that can again be verified by direct
calculation). Thus
$\Cal F_A(f)=([\pi_b(f)K])\ ([\cdot]$ denoting now the closure of
the operator), $\Cal B_1^\oplus(\widehat{\G_a},\nu)\cong
L^1([1,e^{2\pi a}[,\frac1{(2\pi)^2}b\,db,\Cal S_1(\Cal H))$,
general elements of
$\Cal B_1^\oplus(\widehat{\G_a},\nu)$ will be denoted as
\,$T=(T_b)_{1\le b<e^{2\pi a}}$\,.
\\[0mm plus 1mm]
As before, we consider distributions defined using directional derivatives
from the normal subgroup $\C$\,. For example, writing $z=x+iy$\,, put
$d=\lim_{x\to0}\frac1x(\delta_{-x}-\delta_{0})$ and
$D(f)=f\star d$\,, defined on $C^1(\G_a)$. Put $\widehat D_b=\pi_b(d)$
and for $T=(T_b)\,,\ \;\widehat D(T)=([T_b\widehat D_b])$.
Then 
$(\widehat D_bg)(\psi)=-ibe^{-a\psi}\cos(\psi)\,g(\psi)$
\,(for appropriate $g\in\Cal H$) and
\;$\Cal F_A(D(f))=\widehat D(\Cal F_A(f))$
for $f\in A(\G_a)\cap C^1(\G_a)$ with $D(f)\in A(\G_a)$\,. One can check
that $\{([T_b\widehat D_bK^{-1}]): T=(T_b)\in\Cal B_1^\oplus\}$
is not contained in
$L^\infty(\widehat{\G_a},\nu,\Cal B(\Cal H))$\,, hence
$D$ does not produce a bounded mapping $A(\G_a)\to\VN(\G_a)$\,. 
\\[2mm plus 1mm]
For the decomposition of tensor products, we proceed similarly as in
Section\,\ref{euclm}.
$\Cal H\otimes_2\Cal H$ is identified with
$L^2(\R^2)$. For $f\in\Cal H\otimes_2\Cal H$
define \,$(Wf)(\mu)(\psi)=f(\psi,\psi+\mu)$ giving an isometric isomorphism
$W\!:\Cal H\otimes\Cal H\to L^2(\R,\Cal H)$\,.
For $\varphi\in\R\,,\ g\in\Cal H$\,, we define
$(U_\varphi g)(\psi)=g(\psi-\varphi)$, giving a unitary operator on
$\Cal H$\,, depending strongly continuously on $\varphi$\,.  Fix now
$b_1,b_2\neq0$ and for $\mu\in\R$ put
$b_1+b_2\,e^{-w\mu}=r\,e^{-i\varphi}$ (polar coordinates) and
$b=r\,e^{a\varphi}$, defining functions
$r=r(b_1,b_2,\mu),\
\varphi=\varphi(b_1,b_2,\mu)$ (see below) and $b=b(b_1,b_2,\mu)$ \,. Finally,
we define
\,$(W_{\!b_1,b_2}f)(\mu)=U_\varphi((Wf)(\mu))$
\,($f\in\Cal H\otimes_2\Cal H$), giving again an isometric isomorphism
$W_{\!b_1,b_2}\!:{\Cal H\otimes_2\Cal H}\to L^2(\R,\Cal H)$,
\;$(b_1,b_2)\mapsto W_{\!b_1,b_2}$ is strongly continuous. As an analogue
of Proposition\,\ref{tenseu} one gets
$W_{\!b_1,b_2}\circ{(\pi_{b_1}\otimes\pi_{b_2})(\bx)}
\circ W_{\!b_1,b_2}^*=\int_\R^\oplus\pi_b(\bx)\,d\mu$ for $\bx\in\G_a$\,.
$\pr_{\Cal S}\!:\Cal S^1(L^2(\R,\Cal H))
\to L^1(\R,\Cal S^1(\Cal H))$ shall be dual to 
the embedding of the subalgebra $L^\infty(\R)\overline\otimes\Cal B(\Cal H)$
into $\Cal B(L^2(\R,\Cal H))$ and for $A_1,A_2\in\Cal S^1(\Cal H)$ we define
again
\;$\theta_0(A_1,A_2)=\pr_{\Cal S}(W_{\!b_1,b_2}\circ{(A_1\otimes A_2)}
\circ W_{\!b_1,b_2}^*) \quad (\in L^1(\R,\Cal S^1(\Cal H)\,)$. Then
for $u_j(\bx)=\tr(A_j\,\pi_{b_j}(\bx)^*)\ \;{(j=1,2,}\linebreak
\bx\!\in\!\G_a)$
it results as in Corollary\,\ref{prmu} to Proposition\,\ref{tenseu} that
\;$u_1(\bx)\,u_2(\bx)={\,}\linebreak
\int_\R\tr\bigl(\,\pi_b(\bx)^*\,\theta_0(A_1,A_2)(\mu)\,\bigr)\,d\mu$\,.
\\[0mm plus 1mm]
The definition of the function $b(\mu)$ and the operators
$W_{\!b_1,b_2},\!\theta_0$
depends on the choice of~$\varphi$\,. If $\varphi$ is replaced by
$\varphi+2\pi$\,, then $b$ transforms to $b\,e^{2\pi a}$ and similarly for
$W_{\!b_1,b_2},\theta_0$\,. The expression for $u_1(\bx)\,u_2(\bx)$ above is
valid for any
measurable choice of $\varphi(\mu)$\,. We will now assume that $\varphi$
is chosen so that $b\in[1,e^{2\pi a}[$
holds. This determines $\varphi$
uniquely for all $\mu$ where $r\neq0$\,. For fixed
$b_1,b_2\,, \linebreak r(b_1,b_2,\mu)=0$ can happen just for one $\mu$
\,(see Remark\,(b)).
We define now 
$\theta(A_1,A_2)(b)=\frac{(2\pi)^2}b\sum_{b(\mu)=b}
\theta_0(A_1,A_2)(\mu)\,\lvert\frac{db}{d\mu}(\mu)\rvert^{-1}$\,.
This gives \;$u_1(\bx)\,u_2(\bx)=\int_1^{e^{2\pi a}}
\tr\bigl(\,\pi_b(\bx)^*\,\theta(A_1,A_2)(b)\,\bigr)\,d\nu(b)$\,.
Finally, it follows as in Corollary\,\ref{dualc} of Proposition\,\ref{tenseu}
that for  $S=(S_b),T=(T_b)\in \Cal B_1^\oplus(\widehat{\G_a},\nu)$, we have
\[
(S\sharp T)_b\,=\iint\theta(S_{b_1},T_{b_2}\!)(b)\,d\nu(b_1)\,d\nu(b_2)\quad
\text{a.e.}
\]\noindent
As in Lemma\,\ref{integk} there is an expression in terms of
integration kernels, for $\mu\in\R$ \,one has
\,$k_{\theta_0(A_1,A_2)(\mu)}(\psi,\psi_1)\,=\,
k_{A_1}(\psi-\varphi\,,\psi_1-\varphi)\:
k_{A_2}(\psi-\varphi+\mu\,,\psi_1-\varphi+\mu)$ . It allows
extensions of $\theta_0\,,\theta$ and $\sharp$\,.
For a Borel measurable function $a_0\!:\R^2\to\C$ and $\xi_1,\xi_2\in\R$ we
put $\lVert a_0\rVert_{L^1\!,\xi_1,\xi_2}=
\int_{\xi_1}^{\xi_1+1}\int_0^\infty\lvert\,a_0(\xi+t,\xi_2+t)\rvert\,dt\,d\xi
\,.$
We consider now measurable fields $t=(t_b)_{1\le b<e^{2\pi a}}$ of functions,
i.e. $(b,\psi,\psi_1)\to t_b(\psi,\psi_1)$ shall be Borel measurable and
use
\;$\bbW=\{t:\int\lVert t_b\rVert_{L^1\!,\xi_1,\xi_2}db<\infty\text{ for all }
\xi_1,\xi_2\in\R\}$ \,(identifying fields which coincide a.e.),
\;$\Cal W_0=\{t:\;\sup_b\lVert t_b\rVert_\infty<\infty\text{ \,and there exists
a compact set}\linebreak C\subseteq\R^2 \text{ such that for all }b\
\;t_b(\psi,\psi_1)=0
\text{ holds when }(\psi,\psi_1)\notin C\}$. Then, using the formula above,
$s\sharp t$ can be defined for $s\in\Cal W_0\,,\;t\in\bbW$\,. The case
$b_2=e^{\pm\pi a}b_1$ causes some technical complications, it is not always
true that $s_1\sharp s_2\in\Cal W_0$ for $s_1,s_2\in\Cal W_0$\,.
\\
Finally, we consider
\;$\bW=\{t:\;\sup_{\xi_1,\xi_2}\frac1{1+e^{-\xi_2}}
\int\lVert t_b\rVert_{L^1\!,\xi_1,\xi_2}db\;<\;\infty\,\}$\,. Clearly
$\bW\subseteq\bbW$ and one can define now $\Cal W$ as in Section\,\ref{euclm}.
Similarly as in Lemma\,\ref{banm}, dual convolution $\sharp$ extends to an
action of
$\Cal B_1^\oplus(\widehat{\G_a})$ on $\Cal W$\,, making $\Cal W$ a
Banach $\Cal B_1^\oplus$-module. One can show that
\,$\{T:\int\lVert\,[T_bK^{\frac12}]\,\rVert_1\,d\nu(b)<\infty\,\}
\subseteq\Cal W$\,
hence $\widehat D(\Cal B_1^\oplus)\subseteq\Cal W$ finishing our proof for
$A(\G_a)$\,.
\begin{Rems}
(a) It follows from the arguments above that
\;$\pi_{b_1}\otimes\pi_{b_2}\simeq
\,\int_1^{e^{2\pi a}}n(b)\,\pi_b\,db$ for some multiplicities
$n(b,b_1,b_2)\in\N_0\cup\{\infty\}$\,.  The behaviour of $n(b)$ depends on
the relative sizes of $a,b_1,b_2$\,. For
$b_2\neq e^{\pm\pi a}b_1\,,\ 1\le b_1,b_2\le e^{2\pi a}$ one has that
$n(b)$ is finite for all $b\neq b_1,b_2$\,, but $n(b)\to\infty$ when
$b\to b_1$ or $b\to b_2$ \,(and $n(b)$ is bounded on every closed interval
not containing $b_1,b_2$). In the exceptional cases $b_2=e^{\pm\pi a}b_1$ one
gets $n(b)=\infty$ for all $b$\,.
\item[(b)]\ Geometrically, $\beta(\mu)=b_1+b_2\,e^{-w\mu}$ describes a
logarithmic spiral. For $1\le b_1,b_2\le e^{2\pi a},\ \ \beta(\mu)=0$
(equivalently $r(\mu)=0$) happens only for $b_2=e^{\pm\pi a}b_1$\,, and
for $b_2=e^{\pi a}b_1$ it happens only for $\mu=\pi$\,. With some
computations one can show that
\;$\frac{db}{d\mu}=-e^{a(\varphi-\mu)}b_1b_2(a^2+1)\frac{\sin(\mu)}r$ at
every continuity point of $\varphi$\,. It follows that $b(\mu)$ is
monotonic on every subinterval of $[k\pi,(k+1)\pi]$ where $\varphi$ is
chosen as a continuous function of $\mu\,\ (k\in\Z)$. This justifies also our
definition of $\theta$ above (in particular that
$\theta(A_1,A_2)(b)\in\Cal B_1^\oplus(\G_a)$\,), based on the coordinate
transformation $b=b(\mu)$\,.
\item[(c)]\ Our representations $\pi_b$ are basically those of \cite{FL}
(more pecisely, $\pi_b\simeq\pi_\mu$ with $\pi_\mu$ as in \cite{FL}\,p.108,
$\mu=-b$). Another version is given by
\,$(\pi'_\alpha(z)f)(\psi)=\exp{(i\,\Re(e^{-w\psi}z\alpha))}f(\psi)\,,\
(\pi'_\alpha(t)f)(\psi)=f(\psi-t)$ where
$\alpha\in\C\,,\;\lvert\alpha\rvert=1$ and as before $f\in\Cal H$\,.
Then $\pi'_\alpha\simeq\pi_b$ with $\alpha=b^{-\frac ia},\,b>0$. The family
$\{\pi'_\alpha:\lvert\alpha\rvert=1\}$ better reflects the periodicity
in the infinite dimensional representations of $\G_a$\,.
\end{Rems}\vspace{2mm}
\appendix
\section{Connected dense subgroups} \label{AppenA}
\vspace{1mm}\noindent
We elaborate (and slightly extend) results of Malcev, Goto, Poguntke.
\begin{ThmA}	\label{dense}
Let $G$ be a locally compact group, $H$ a connected Lie group,\linebreak
$\varphi\!:H\to G$ \,a %
continuous homomorphism such that $\ker\varphi$ is discrete.
Then there exists a closed vector subgroup $V$ of $H$ \ (i.e. $V\cong\R^n$ for
some $n\ge0$) such that 
$\overline{\varphi(H)}=\varphi(H)\:\overline{\varphi(V)}$\,,
$\varphi(V)$ is relatively compact in $G$\,,
$\overline{\varphi(V)}\cap\varphi(H)=\varphi(V)$ and the image $\iota_H(V)$
of $V$ in the  group
of inner automorphisms $\Int(H)$ is relatively compact in $\Aut(H)$\,.
Furthermore, $V\cap\ker\varphi$ is trivial, \,$V\ker\varphi$ closed.
\\
$\varphi(H)$ is a normal subgroup of $\overline{\varphi(H)}$ and there exists
a continous homomorphism
$\alpha\!:\overline{\varphi(H)}\to\Aut(H/\ker\varphi)$ \,(necessarily unique)
such that \,$\alpha\circ\varphi=\iota_{H/\ker\varphi}\circ\pi_1$
\,and this lifts also to an action of \,$\overline{\varphi(H)}$ on $H$\,.
\end{ThmA}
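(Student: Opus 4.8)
The plan is to reduce first to the case that $\varphi(H)$ is dense, i.e. $G=\overline{\varphi(H)}$, a connected locally compact group. The abelian case is the model: for $H\cong\R^n$ the closure is $\R^a\times K$ with $K$ compact connected, the induced map $H\to\R^a$ is a surjective linear map, and $V$ is a lift of its kernel, which is sent into the compact part $K$. In general one expects the gap between $\varphi(H)$ and $\overline{\varphi(H)}$ to be accounted for by a vector subgroup $V\subseteq H$ along which $\varphi$ has relatively compact image, winding densely into a torus $T=\overline{\varphi(V)}$ with $\overline{\varphi(H)}=\varphi(H)\,T$. Since $\ker\varphi$ is a discrete normal subgroup of the connected group $H$ it is central, so I would first pass to $\bar H=H/\ker\varphi$ with the induced injective homomorphism $\bar\varphi$, construct there a \emph{closed} vector subgroup $\bar V$, and then take $V$ to be the identity component of $\pi_1^{-1}(\bar V)$. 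As $\bar V\cong\R^n$ is simply connected, $V$ is a vector subgroup mapping isomorphically onto $\bar V$, with $V\cap\ker\varphi=\{e\}$; and $V\ker\varphi=\pi_1^{-1}(\bar V)$ is closed precisely because $\bar V$ is, disposing of the two assertions about the kernel.

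For the construction of $\bar V$ I would first assume $G$ is a Lie group. Then $\bar\varphi(\bar H)$ is an analytic subgroup of $G$, and the results of Malcev and Goto on closures of connected Lie subgroups (\cite{G2}) apply: $\bar\varphi(\bar H)$ is normal in $G=\overline{\bar\varphi(\bar H)}$, the quotient is a torus, and this torus is the closure of $\bar\varphi(\bar V)$ for a closed vector subgroup $\bar V\subseteq\bar H$, obtained by pulling the (abelian) toral direction back through the injective differential $d\bar\varphi$. Setting $T=\overline{\varphi(V)}$, compactness of $T$ together with the continuity of the conjugation action of $G$ on its normal Lie subgroup $\bar\varphi(\bar H)$ shows that the automorphisms induced by $T$ form a compact subgroup of $\Aut(\bar H)$; since $\bar\varphi(\bar V)\subseteq T$ and these automorphisms are exactly $\iota_{\bar H}(\bar V)$, this yields relative compactness of $\iota_H(V)$ in $\Aut(H)$ after lifting. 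The identities $\overline{\varphi(H)}=\varphi(H)\,T$, $\overline{\varphi(V)}\cap\varphi(H)=\varphi(V)$, and relative compactness of $\varphi(V)$ are then read off from the decomposition.

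The main work, and the point where the argument genuinely \emph{extends} the classical statements, is the passage to a general locally compact $G$. Here I would write the connected group $G=\overline{\varphi(H)}$ as an inverse limit of Lie groups $G_i=G/C_i$ ($C_i$ compact normal, $\bigcap C_i=\{e\}$), apply the Lie case to each $\varphi_i\colon H\to G_i$, and then pass to the limit. The difficulty is that the resulting vector subgroups live a priori only at each finite stage; to produce a \emph{single} $V\subseteq H$ valid for the full closure I would exploit that all candidates are subspaces of the fixed finite–dimensional $\fr h$, so that a compactness argument in the Grassmannian of $\fr h$, together with the eventual stabilization of the toral directions controlled by the requirement that $\iota_H(\cdot)$ stay relatively compact, isolates a stable $\fr v$ and hence $V=\exp_H(\fr v)$. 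Verifying that this single $V$ simultaneously satisfies all the closure and intersection properties is the technical heart, and this is where I expect to lean on the Goto--Poguntke machinery (\cite{G2},\cite{PW}).

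Finally, for the second assertion I would argue directly that $\varphi(H)$ is normal in $\overline{\varphi(H)}=\varphi(H)\,T$ and simultaneously define $\alpha$. For $g\in T$ choose $\varphi(v_n)\to g$ with $v_n\in V$; relative compactness of $\iota_H(V)$ lets me pass to a subnet along which the inner automorphisms $h\mapsto v_n h v_n^{-1}$ converge in $\Aut(H/\ker\varphi)$ to some $\beta$, and then $g\,\varphi(h)\,g^{-1}=\lim\varphi(v_n h v_n^{-1})=\varphi(\beta(h))\in\varphi(H)$, giving both normality and the value $\alpha(g)=\beta$. On $\varphi(H)$ one sets $\alpha(\varphi(h))=\iota_{H/\ker\varphi}(\pi_1(h))$, and the two prescriptions are compatible on $\varphi(H)\cap T=\varphi(V)$. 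Continuity and multiplicativity of $\alpha$ follow from these formulas, uniqueness from density of $\varphi(H)$ together with the relation $\alpha\circ\varphi=\iota_{H/\ker\varphi}\circ\pi_1$, and the lift to an action on $H$ from the fact that $\overline{\varphi(H)}$ is connected, so that $\im\alpha$ is a connected subgroup of $\Aut(H/\ker\varphi)$ consisting of automorphisms that lift through the covering $H\to H/\ker\varphi$.
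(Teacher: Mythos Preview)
Your reduction to the case $G=\overline{\varphi(H)}$, the passage to $\bar H=H/\ker\varphi$, and the recovery of $V$ as the identity component of $\pi_1^{-1}(\bar V)$ are exactly what the paper does, and your treatment of the Lie case via the Malcev--Goto/Poguntke theory is essentially the same as the paper's (it cites \cite{P}\,Prop.\,1.10 for $\bar V$ and \cite{P}\,p.\,636 for $\alpha$).

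The gap is in your passage from Lie $G$ to general locally compact $G$. You propose to apply the Lie case to \emph{every} quotient $\varphi_i\!:H\to G_i=G/C_i$ and then run a Grassmannian compactness argument on the resulting family of subspaces $\fr v_i\subseteq\fr h$. There are two problems. First, the Lie case as you have stated it requires $\ker\varphi_i=\varphi^{-1}(C_i)$ to be discrete, and for an arbitrary compact normal $C_i$ this fails: $\varphi^{-1}(C_i)$ can be a positive-dimensional closed subgroup of $H$. Second, even granting a sequence of $V_i$'s, a limit point $\fr v$ in the Grassmannian gives you a candidate subspace but no mechanism for transferring the closure and saturation identities from the $G_i$ to $G$ itself; you acknowledge this is the ``technical heart'' but do not supply the argument.

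The paper avoids both issues with a single clean observation: among the compact normal $K\trianglelefteq G$ with $G/K$ Lie, choose one for which $\dim\varphi^{-1}(K)$ is \emph{minimal}. If $D=\varphi^{-1}(K)$ were non-discrete, pick $x\in D^0\setminus\ker\varphi$; then $\varphi(x)\neq e$, so there is a smaller $K_1\subseteq K$ with $G/K_1$ Lie and $\varphi(x)\notin K_1$, contradicting minimality. Thus $D$ is discrete, the Lie case applies \emph{once} to $\varphi_K\!:H\to G/K$, and the resulting $V$ is the one that works for $G$. The remaining properties (relative compactness of $\varphi(V)$, the decomposition $G=\varphi(H)\,\overline{\varphi(V)}$, saturation) are then pulled back through the compact fibre $K$ using \cite{P}\,L.\,1.1(ii); $\alpha$ is obtained as $\alpha_K\circ\pi_K$ followed by the lift between the locally isomorphic groups $H/\ker\varphi_K$ and $H/\ker\varphi$, rather than via your subnet construction. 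So: replace your inverse-limit/Grassmannian scheme by this minimality trick and a single application of the Lie case.
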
\noindent
Writing $\iota_x(y) = xyx^{-1}\ (x,y\in H)$ for inner automorphisms defines
the homomorphism $\iota_H\!:H\to\Int(H)$. $\pi_1\!\!:H\to H/\ker\varphi$
denotes the quotient map. We do not exclude the case that
$\varphi(H)$ is closed. Then $V$ must be trivial.
\begin{proof}
Clearly, we can assume that
$\overline{\varphi(H)}=G$\,. First, we consider the case where $G$
is a Lie group. We follow the method of \cite{P}. Put
$H_1=H/\ker\varphi$ \,and let $\varphi_1\!\!:H_1\to G$ be the induced
homomorphism, $\pi_1\!\!:H\to H_1$ denotes the quotient mapping. Thus
$\varphi=\varphi_1\circ\pi_1$ and $\varphi_1$ is injective.
By \cite{P}\,Prop.\,1.10 there exists a
vector subgroup $V_1$ of $H_1$ \,such that 
\,$G=\varphi_1(H_1)\mspace{3mu}\overline{\varphi_1(V_1)}$\,,
\;$\varphi_1(V_1)$ is relatively compact in $G$ \,and
$\varphi_1^{-1}\bigl(\overline{\varphi_1(V_1)}\,\bigr)=V_1$ \,(hence $V_1$ is
closed). Put $V=\pi_1^{-1}(V)^0$ \,(identity component). $\pi_1^{-1}(V)$ is a
Lie group, locally isomorphic to $V$ \,(using that $\ker\varphi$ is discrete,
see \cite{V}\,2.6). It follows that
$\pi_1(V)=V_1$\,, hence $\varphi(V)=\varphi_1(V_1)$ and ($V_1$ being simply
connected) $\pi_1\vert V$ is an isomorphism. Consequently, $V$ is a
vector subgroup of $H$\,, $V\cap\ker\varphi$ is trivial,
\,$V\ker\varphi=\pi_1^{-1}(V)$
closed. $\varphi(V)$ is relatively compact.
$\varphi_1^{-1}\bigl(\overline{\varphi_1(V_1)}\,\bigr)=V_1$ is
equivalent to $\overline{\varphi_1(V_1)}\cap\varphi_1(H_1)=\varphi_1(V_1)$ and
this is also equivalent to the corresponding property for $V$\,.
$\varphi_1(H_1)$ is normal in $G$ by \cite{P}\,1.2. The
construction of a homomorphism $\alpha\!:G\to\overline{\Int(H_1)}$ is
explained in \cite{P}\,p.\,636 (using the notation $A_\varphi$ and
$\operatorname{Ad_H}$ for our $\iota_H$). $H$ and
$H_1$ are locally isomorphic, hence as explained in \cite{P}\,p.\,634,
$\overline{\Int(H)}$ and $\overline{\Int(H_1)}$ are both isomorphic to the
same subgroup of automorphisms of the Lie algebra of $H$ and it follows that
$\alpha$ lifts to a homomorphism $\alpha^+\!:G\to\overline{\Int(H)}$ with
$\alpha^+\circ\varphi=\iota_H$\,, in particular
\,$\overline{\iota_H(V)}=\alpha^+\bigl(\overline{\varphi(V)}\bigr)$
is compact. 
\\[0mm plus 1mm]
In the general case, $G$ is a connected locally compact group, hence a
projective limit of Lie groups. Let $K$ be a compact normal subgroup of $G$
such that
$G/K$ is a Lie group and the dimension of the Lie group
$D=\varphi^{-1}(K)$
becomes minimal. If $D$ would be non--discrete, take
$x\in D^0\setminus\ker\varphi$. Then $\varphi(x)\neq e$ and there would exist
a compact normal subgroup $K_1\subseteq K$ of $G$ such that
$G/K_1$ is a Lie group and $\varphi(x)\notin K_1$\,. Then
$x\notin \varphi^{-1}(K_1)$ contradicting the minimality condition for
$K$\,. Thus $D$ must be discrete. Let $\pi_K\!\!:G\to G/K$ be the quotient
mapping and put $\varphi_K=\pi_K\circ\varphi$\,. Then $\ker\varphi_K=D$ \,and
we can apply the special case to $G/K\,,\varphi_K$\,. Let $V$ be a vector
subgroup of $H$ satisfying the properties of Theorem\,\ref{dense} for
$\varphi_K$\,. As mentioned above, this implies that
$\varphi_K^{-1}(\overline{\varphi_K(V)})=V$ and then the same property for
$\varphi$ follows easily. Compactness of $K$ implies that $\varphi(V)$ is
relatively compact in $G$\,.
For $T=\overline{\varphi(V)}\,K$ we get that $G=\varphi(H)\,T$\,.
Then by \cite{P}\,L.\,1.1(ii), $\overline{\varphi(\varphi^{-1}(T))}=T$\,.
We have $\pi_K(T)\subseteq\overline{\varphi_K(V)}$ and this implies
$\varphi^{-1}(T)\subseteq\varphi_K^{-1}\bigl(\overline{\varphi_K(V)}\,\bigr)
=V$ hence $T=\overline{\varphi(V)}$\,. Let
$\alpha_K\!:G/K\to\overline{\Int(H/\ker\varphi_K)}$ be the homomorphism with 
$\alpha_K\circ\varphi_K=\iota_{H/\ker\varphi_K}$\,. Then
$\alpha^-=\alpha_K\circ\pi_K\!:G\to\overline{\Int(H/\ker\varphi_K)}$ and
as above this lifts to $\alpha\!:G\to\overline{\Int(H/\ker\varphi)}$
and $\alpha^+\!:G\to\overline{\Int(H)}$ with the
required properties. In particular
\,$\overline{\iota_H(V)}=\alpha^+\bigl(\overline{\varphi(V)}\bigr)$
is compact. For $y\in G\,,x\in H$ it follows by approximation that
$y\,\varphi(x)\,y^{-1}=\varphi\bigl(\alpha^+(y)(x)\bigr)$ hence $\varphi(H)$ is
normal in $G$\,.
\end{proof}
\begin{Pro}    \label{densepr}
{\rm (a)} Let $H_1\,,\Cal K$ be locally compact groups, $V_1$ a closed
subgroup of $H_1$\,,
$\varphi_0\!\!:V_1\to\Cal K\,,\ \alpha_0\!\!:\Cal K\to\Aut(H_1)$
continuous homomorphisms such that
$\alpha_0\circ\varphi_0=\iota_{H_1}\vert V_1\,,\
\Cal K=\overline{\varphi_0(V_1)}$\,.
Put $D=\{x^{-1}\varphi_0(x): x\in V_1\}$
(in $H_1\underset{^{\alpha_0}}{\rtimes}\Cal K$),
$G_1=(H_1\underset{^{\alpha_0}}{\rtimes}\Cal K)/D$\,. Let
$\pi\!:H_1\underset{^{\alpha_0}}{\rtimes}\Cal K\to G_1$ be the quotient map,
$\varphi_1=\pi\vert H_1$.\\
Then $\overline{\varphi_1(V_1)}=\pi(\Cal K)\cong\Cal K$ (topologically),
$\varphi_1(H_1)\cap\overline{\varphi_1(V_1)}=\varphi_1(V_1)=
\pi(\im \varphi_0)$,
\\[0mm plus 1mm]
$\varphi_1\!:H_1\to G_1\,,\ \ker\varphi_1=\ker\varphi_0\,,\
\overline{\varphi_1(H_1)}=G_1=\varphi_1(H_1)\,\overline{\varphi_1(V_1)}$ \
(``attaching $\Cal K$ to $H_1$ along $V_1$'').
\\[1mm]
{\rm (b)} Conversely, assume $H,\varphi,V,\alpha$ are as in
Theorem\,\ref{dense} with
$G=\overline{\varphi(H)}\,,\ \Cal K=\overline{\varphi(V)}$\,. \ Put
$H_1=H/\ker\varphi$ \,and let $\varphi^-\!\!:H_1\to G$ be the induced
homomorphism, $\pi_1\!\!:H\to H_1$ \,the quotient mapping, $V_1=\pi_1(V),\
\varphi_0=\varphi^-\vert V_1\,,\ \alpha_0=\alpha\vert\Cal K$\,.
Then $x\,k\mapsto\varphi(x)\,k\ (x\in H_1\,,\,k\in\Cal K)$ induces a
(topological) isomorphism of
\,$G_1=(H_1\underset{^{\alpha_0}}{\rtimes}\Cal K)/D$ and $G$\,. With
$\varphi_1^+=\varphi_1\circ\pi_1\!:H\to G_1$ it follows that $(G,\varphi)$
and $(G_1,\varphi_1^+)$ are equivalent in the sense corresponding to
\cite{P}\,Def.\,2.1.
\\[1mm plus .5mm]
If there exists a closed normal subgroup $H_2$ of $H_1$ such that
$H_1=H_2\rtimes V_1$ splits (equivalently, there exists a closed normal
subgroup $H_2^+$ of $H$ such that $H_2^+\supseteq\ker\varphi$ and
$H=H_2^+\rtimes V$ splits), then $G_1\cong H_2\rtimes\Cal K$ (restricting
the action $\alpha_0$ to $H_2$).
\end{Pro}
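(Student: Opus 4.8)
The plan is to realize $H_2\rtimes\Cal K$ directly as a subgroup of $H_1\rtimes_{\alpha_0}\Cal K$ that is complementary to $D$, so that the restriction of the quotient map $\pi$ to it becomes an isomorphism onto $G_1$. Before anything else I would check that $H_2$ is $\alpha_0$-invariant, which is needed even to make sense of ``restricting $\alpha_0$ to $H_2$'': for $x\in V_1$ the hypothesis $\alpha_0\circ\varphi_0=\iota_{H_1}|V_1$ gives $\alpha_0(\varphi_0(x))=\iota_x$, which maps the normal subgroup $H_2$ onto itself; since $\{k\in\Cal K:\alpha_0(k)(H_2)\subseteq H_2\}$ is closed (by continuity of the action and closedness of $H_2$) and contains the dense set $\varphi_0(V_1)$, it equals $\Cal K$, and passing to inverses yields $\alpha_0(k)(H_2)=H_2$ for every $k$. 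Hence $H_2\,\Cal K$ is a closed subgroup of $H_1\rtimes_{\alpha_0}\Cal K$ canonically isomorphic to $H_2\rtimes_{\alpha_0|}\Cal K$.

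Next I would establish the two algebraic facts that $H_2\,\Cal K$ is a complement to $D$, namely $(H_2\,\Cal K)\cap D=\{e\}$ and $(H_2\,\Cal K)\,D=H_1\rtimes_{\alpha_0}\Cal K$. For the intersection, a typical element $v^{-1}\varphi_0(v)$ of $D$ has $H_1$-component $v^{-1}\in V_1$; if it lay in $H_2\,\Cal K$, uniqueness of the decomposition in $H_1\rtimes_{\alpha_0}\Cal K$ would force $v^{-1}\in H_2\cap V_1=\{e\}$, so the element is trivial. For the product, write a general element as $h_1k$ with $h_1=h_2v$ ($h_2\in H_2,\ v\in V_1$), and use $v^{-1}\varphi_0(v)\in D=\ker\pi$ to get $\pi(v)=\pi(\varphi_0(v))$; then $\pi(h_1k)=\pi(h_2)\,\pi(\varphi_0(v))\,\pi(k)=\pi\bigl(h_2\varphi_0(v)k\bigr)$ with $h_2\varphi_0(v)k\in H_2\,\Cal K$. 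Thus $\pi|_{H_2\Cal K}$ is a continuous bijective homomorphism onto $G_1$.

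The step I expect to be the real work is upgrading this bijection to a topological isomorphism, since a continuous bijective homomorphism of locally compact groups need not be open. The plan is to avoid any open-mapping or $\sigma$-compactness hypothesis by exhibiting a continuous inverse explicitly. The factorization above reads $h_1k=\bigl(h_2\varphi_0(v)k\bigr)\,d$ with $d=k^{-1}\bigl(\varphi_0(v)^{-1}v\bigr)k$; using $\alpha_0(\varphi_0(v))=\iota_v$ one checks $\varphi_0(v)^{-1}v=(v^{-1}\varphi_0(v))^{-1}\in D$, and normality of $D$ (which holds since $G_1$ is a quotient group) gives $d\in D$. Because $H_1=H_2\rtimes V_1$ is a topological semidirect product, the map $h_1\mapsto(h_2,v)$ is continuous, so the product map $m\!:(H_2\,\Cal K)\times D\to H_1\rtimes_{\alpha_0}\Cal K$ is a homeomorphism with continuous inverse $h_1k\mapsto\bigl(h_2\varphi_0(v)k,\,d\bigr)$. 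Composing $m$ with the (open, continuous) quotient $\pi$ and noting $\pi\circ m(g,d)=\pi(g)$, one sees that $\pi|_{H_2\Cal K}$ is open; being also continuous and bijective, it is a topological isomorphism $H_2\rtimes\Cal K\cong G_1$.

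Finally I would dispatch the parenthetical equivalence of the two splitting hypotheses. Given $H_2$ normal in $H_1$ with $H_1=H_2\rtimes V_1$, one sets $H_2^+=\pi_1^{-1}(H_2)$: this is closed, normal, and contains $\ker\varphi=\pi_1^{-1}(e)$, and since $\pi_1|V\!:V\to V_1$ is an isomorphism (Theorem \ref{dense}) a short verification gives $H_2^+\cap V=\{e\}$ and $H=H_2^+V$, with the topological splitting inherited from that of $H_1$ through $\pi_1$. Conversely $H_2=\pi_1(H_2^+)=H_2^+/\ker\varphi$ recovers the split decomposition of $H_1$. I regard this as routine and would state it briefly, reserving the detailed work for the invariance of $H_2$ and the continuity of $m^{-1}$, which are the genuine content.
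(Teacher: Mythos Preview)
Your argument for the splitting clause is correct and essentially self-contained. The invariance of $H_2$ under $\alpha_0$, the two complementarity checks $(H_2\Cal K)\cap D=\{e\}$ and $(H_2\Cal K)D=H_1\rtimes_{\alpha_0}\Cal K$, and the explicit factorization $h_1k=(h_2\varphi_0(v)k)\,d$ all verify as written; the continuity of the inverse of $m$ follows, as you say, from the assumption that $H_1=H_2\rtimes V_1$ is a \emph{topological} semidirect product.

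The paper proceeds differently. For part~(b) it does not build a complement to $D$; instead it notes that in the situation of Theorem~A the group $G_1$ is $\sigma$-compact (since $H$ is connected and $\Cal K$ compact) and then invokes the open mapping theorem (\cite{HR}\,Th.\,5.29) to upgrade the continuous bijection to a topological isomorphism. For the splitting clause itself the paper simply defers to \cite{P}. Your construction bypasses any countability hypothesis by exhibiting the continuous inverse directly, which is cleaner and slightly more general; the paper's route is shorter to write but relies on the ambient assumptions coming from Theorem~A. Either way leads to the same conclusion.

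Two small remarks. First, when you appeal to normality of $D$ you say ``which holds since $G_1$ is a quotient group''; strictly speaking this is what has to be checked, and the paper does so using density of $\varphi_0(V_1)$ in $\Cal K$ (the same density argument you use for the invariance of $H_2$). Second, your proposal addresses only the splitting clause and the parenthetical equivalence; parts~(a) and the main isomorphism $G_1\cong G$ in~(b) are left aside. The paper treats those as routine as well, so this is not a defect of your write-up so much as a reminder of scope.
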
\noindent
As usual, we identify $H,\Cal K$ with the corresponding subgroups of the
semidirect product.
\begin{proof} Using density of $\varphi_0(V)$ in $\Cal K$ one can show that
$D$ is a normal subgroup (if $V_1$ is abelian then $D$ is even central).
The properties of $G_1$ follow by routine computations (see also
\cite{P}\,p.\,637f. for a related construction).
In the situation of Theorem\,\ref{dense}, $H$ is connected, $\Cal K$ compact
and it follows that $G_1$ is $\sigma$--compact. By \cite{HR}\,Th.\,5.29
the isomorphism between $G_1$ and $G$ is also a homeomorphism. A condition
for the splitting case is \cite{P}\,Th.\,1.12 and \cite{P}\,1.12 an example
of non--splitting.
\end{proof}
\begin{Rems}
(a) A related result was shown in \cite{G2}\,Th.\,3 for the case where
$G$ is an arbitrary Hausdorff topological group. But then
$\varphi(V)$ need not be relatively compact in $G$\,. \cite{P} gives a
more direct argument for the case where $G$ is a Lie group and adds various
refinements, using the notion (from \cite{G2}) of a
``generalized maximal torus'' in $H$ \,.
This can also be extended to the case of locally compact $G$\,,
similarly as in the proof of Theorem\,\ref{dense} (but we do not need it for
our applications in Section\,\ref{mini}).
\item[(b)] In \cite{PW} extensions to \cite{G2} are given for the case where
$H$ is locally compact connected group. One can also extend most of
Theorem\,\ref{dense} to the case where
$H,G$ are locally compact groups, $H$ connected,
$\varphi\!:H\to G$ \,a continuous homomorphism such that $\ker\varphi$ is
totally disconnected, except for the property
$\overline{\varphi(V)}\cap\varphi(H)=\varphi(V)$\,.
\item[(c)]
For a connected Lie group $H$ and a discrete central subgroup $D_1$ put
$H_1=H/D_1$\,. Given a closed vector subgroup $V_1$ of $H_1$ such that
$\iota_{H_1}(V_1)$ is relatively compact in $\Aut(H_1)$ the possible choices
of compact groups $\Cal K$ and continuous homomorphisms
$\varphi_0\!\!:V_1\to\Cal K\,,\ \alpha_0\!\!:\Cal K\to\Aut(H_1)$ with
$\Cal K=\overline{\varphi_0(V_1)}$ \,(hence
$\Cal K$~must be abelian),
$\alpha_0\circ\varphi_0=\iota_{H_1}\vert V_1$
can be obtained by duality. \;Put $\Gamma=\widehat{\Cal K}\,,\,
\Gamma_0=\bigl(\overline{\iota_H(V_1)}\bigr)\sphat$ (dual groups),
$\iota_0=\iota_{H_1}\vert V_1$\,. The dual homomorphisms
$\widehat\varphi_0\!:\Gamma\to\widehat V_1\,,\linebreak
\widehat\iota_0\!:\Gamma_0\to\widehat V_1$ (called the adjoint homomorphism
in \cite{HR}\,24.37) must be injective and satisfy
$\widehat\varphi_0\circ\widehat\alpha_0=\widehat\iota_0$\,. Hence, up to
isomorphism, the discrete group $\widehat{\Cal K}$ must be a subgroup
(algebraically) of $\widehat V_1\ (\cong\R^n)$ containing
$\widehat\iota_0(\Gamma_0)$. Conversely, for any such subgroup
$\widehat{\Cal K}$ of $\widehat V_1$ (equipped with discrete topology),
taking $\widehat\varphi_0$ the inclusion, one can apply
Proposition\,\ref{dense}, producing a continuous homomorphism
$\varphi_1\!:H_1\to G_1$ with
$\overline{\varphi_1(H_1)}=G_1=\varphi_1(H_1)\,\overline{\varphi_1(V_1)}$\,,
$\overline{\varphi_1(V_1)}\cong\Cal K$ compact.
Composition with the quotient mapping $\pi_1\!\!:H\to H_1$ gives 
$\varphi=\varphi_1\circ\pi_1\!:H\to G_1$\,.
The maximal case is $\Gamma=V_1$ (discrete topology) giving $\Cal K=b(V_1)$
(Bohr compactification). By \cite{HR}\,24.41, $\varphi_0$
is injective iff $\widehat{\Cal K}$ is dense in $\widehat V_1$ and then
$\ker\varphi=D_1$ giving (up to isomorphism all) examples for
Theorem\,\ref{dense}. If $\varphi_0$ is not injective then $\ker\varphi_0$
contributes to $\ker\varphi$ and this does not fit with the setting of
Theorem\,\ref{dense} (if $\ker\varphi_0$ is non--discrete one even
gets cases where $\ker\varphi$ is non--discrete, see also Remark\,(b) in
Section\,\ref{mini} on $H=\He$).
\\
It is easy to see that if $V$ satisfies the properties of
Theorem\,\ref{dense}, the same is true for $xVx^{-1},\ x\in H$\,.
\\
The saturation condition
$\overline{\varphi(V)}\cap\varphi(H)=\varphi(V)$
\;(going back to \cite{P}) is essential for the representation in
Proposition\,\ref{densepr}(b) as a quotient of a semidirect product.
Often there exist proper subgroups $V_0$ of $V$ with
$\overline{\varphi(V_0)}=\overline{\varphi(V)}$. For example, if $G$ is second
countable there always exists a one dimensional subgroup with this property
(\cite{G1}\,Prop.\,12). But one cannot directly apply
Proposition\,\ref{densepr} with $V_0$\,. The representation in \cite{P}\,p.637
(based on \cite{P}\,Th.\,2.5) is closely related to our
Proposition\,\ref{densepr}(b), but in the notation of \cite{P} $V$ may also
contain a compact part.
\\
As indicated in (b), for general locally compact connected groups $H$
(e.g. when $H$ is not arcwise
connected) there are examples where it is not possible to choose $V$ with
the saturation condition. Then the classification of the extensions $G$
gets more involved.
\end{Rems}
\section{Fourier transform, inversion and extensions} \label{AppenB}\noindent
\cite{KL} does not touch the topic of the non--commutative Fourier transform.
We will recall here some basic definitions and review the general
background. See also \cite{F} for a slightly more detailed survey.
\\
Let $G$ be a locally compact group. We take a fixed (left) Haar measure, in
integrals we write simply $dx$ etc. $L^p(G)$ denotes the standard (complex)
$L^p$--spaces, $M(G)$ the space of bounded complex Radon measures,
convolution is denoted by $\star$\, (see \cite{HR} for basic properties).
Representations $\pi$ of $G$ will always be strongly continuous and act
unitarily on
Hilbert spaces with inner product denoted by $(\,\vert\,)$. The extension to
the algebra $L^1(G)$,
$\pi(f)=\int_G f(x)\pi(x)dx\ (f\in L^1(G)\,)$ is denoted by the same letter,
similarly for $M(G)$. The Fourier--Stieltjes algebra $B(G)$ consists of all
coefficient functions $x\mapsto(\pi(x)f\vert g)$ of all representations of
$G$\,. The Fourier algebra $A(G)$ consists of the coefficients of the left
(or right) regular representation of $G$ on $L^2(G)$ (see \cite{E}). Both
are Banach algebras under pointwise multiplication. $\VN(G)$ denotes  the
von Neumann algebra on $L^2(G)$ generated by the left regular representation.
It is isometrically isomorphic to the dual space $A(G)'$.
\\
For $\Cal H$ a Hilbert space, $\Cal B(\Cal H)$ denotes the space of
bounded linear operators, $\Cal K(\Cal H)$ the subspace of compact operators,
$\Cal S_2(\Cal H)$ the Hilbert--Schmidt operators
(with norm $\lVert\cdot\rVert_2$) and $\Cal S_1(\Cal H)$ the trace class
(norm $\lVert\cdot\rVert_1$).
\\
$\widehat G$ (the dual object of $G$) consists of the equivalence classes of
irreducible representations of $G$\,, it has a topology and a Borel structure
(\cite{Di}\,Ch.\,18). If $G$ is type~I and second countable there exists
a Plancherel measure on $\widehat G$ which we denote by $\nu$\,. There
exists a measurable selection of representatives from the equivalence classes.
We fix such a selection (defined up to a set of $\nu$--measure $0$) and will
use for simplicitly the same letter $\pi$ for a representation on the
space $\Cal H_\pi$ selected in this way from the class $\pi$\,. This gives
a ($\nu$-)\,measurable field of Hilbert spaces $(\Cal H_\pi)$
providing a decomposition of the left regular representation as a direct
integral of factorial representations, $\VN(G)\cong
\int_{\widehat G}^\oplus \Cal B(\Cal H_\pi)\,d\nu(\pi)$\,.
Following \cite{F} we will use the abbrevation
$\Cal B_1^\oplus(\widehat G,\nu)$ for the space of measurable operator
fields $(T_\pi)$ satisfying $T_\pi\in\Cal S_1(\Cal H_\pi)\ \nu
\text{-a.e. and }
\lVert(T_\pi)\rVert_1=\int_0^\infty\lVert T_\pi\rVert_1\,d\nu(\pi)<\infty\}$
(with the usual equivalence classes with respect to $\nu$\,; in the notation
of \cite{T1}\,p.\,285 this is
$\int_{\widehat G}^\oplus \Cal S_1(\Cal H_\pi)\,d\nu(\pi)$ describing
the predual of $\VN(G)$\,). Similarly $\Cal B_2^\oplus(\widehat G,\nu)$
denotes the direct integral of Hilbert spaces
$\int_{\widehat G}^\oplus \Cal S_2(\Cal H_\pi)\,d\nu(\pi)$ \
(i.e.
$\lVert(T_\pi)\rVert_2^2=
\int_{\widehat G}\lVert T_\pi\rVert_2^2\,d\nu(\pi)$\,).
\\[0mm plus 1mm]
When $G$ is abelian, $\widehat G$ coincides with the dual group of $G$\,,
$\nu$ is a (suitably normalized -- see below) left Haar measure,
$\Cal B_1^\oplus(\widehat G,\nu)=L^1(\widehat G,\nu),\
\Cal B_2^\oplus(\widehat G,\nu)=L^2(\widehat G,\nu)$. If $G$ is a compact
group, $\widehat G$ is discrete, $\Cal B_1^\oplus(\widehat G,\nu)$ is a
weighted $l^1$-sum, $\Cal B_2^\oplus(\widehat G,\nu)$ a
weighted $l^2$-sum. In the examples of the Sections\,\ref{axb},\ref{heis},%
\ref{euclm}, we have that
$\Cal H_\pi=\Cal H$ is a fixed Hilbert space for $\nu$--almost all $\pi$\,.
Then \;$\Cal B_1^\oplus(\widehat G,\nu)=L^1(\widehat G,\nu,\Cal S_1(\Cal H))$
\;(Bochner integral for $\Cal S_1(\Cal H)$--valued functions,
\cite{T1}\,Ch.\,IV,\,Sec.\,7 uses the notation
\,$L^1_{\Cal S_1(\Cal H)}(\widehat G,\nu)$\,),
\,$\Cal B_2^\oplus(\widehat G,\nu)=L^2(\widehat G,\nu,\Cal S_2(\Cal H))$.
Similarly, $\VN(G)\cong L^\infty(\widehat G,\nu,\Cal B(\Cal H))\cong
L^\infty(\widehat G,\nu)\overline\otimes\Cal B(\Cal H)$ when using
a slightly weaker notion of measurability for $\Cal B(\Cal H)$-valued
functions (\cite{T1}\,Ch.IV,\,Def.\,7.7). A slightly different example is in
Section\,\ref{heisK}.
\\[1mm plus 1.5mm]
For $f\in L^1(G)$ we write $\Cal F(f)=(\pi(f))$  for the (non--commutative)
Fourier transform of $f$ (similarly for $\mu\in M(G)$\,).
Let $\lambda(f)(g)=f\star g$ ($g\in L^2(G)$) be the left convolution operator
on $L^2(G)$. Then $\lambda$ defines an embedding of $L^1(G)$ to a w*-dense
subalgebra of $\VN(G)$ and $\Cal F$ extends to a w*-continuous isomorphism
of von Neumann algebras $\VN(G)\to L^\infty(\widehat G,\nu)$  \;(again
denoted by $\Cal F$).
\\[0mm plus 1mm]
Assume now that $G$ is unimodular. Then the crucial property of the
Plancherel measure is that for $f\in L^1(G)\cap L^2(G)$ we have
$\Cal F(f)\in\Cal B_2^\oplus(\widehat G,\nu)$ and
$\lVert f\rVert_2=\lVert\Cal F(f)\rVert_2$ \,(and this determines $\nu$
uniquely when fixing a Haar measure on $G$).\linebreak $\Cal F$~extends to an
isometric isomorphism of Hilbert spaces
$L^2(G)\to\Cal B_2^\oplus(\widehat G,\nu)$ \;(again
denoted by $\Cal F$\,; \cite{F}\,Th.\,3.31 uses the notation $\Cal P$),
it follows also that
$(f\vert g)=\int_{\widehat G}\tr(\pi(g)^*\pi(f))\,d\nu(\pi)$ 
for $f,g\in L^1(G)\cap L^2(G)$ \,($\tr(\cdot)$ denoting the trace of the
operator). For $f\in L^2(G)\cap\VN(G)$  (i.e. the functions defining a
convolution operator on $L^2(G)$; called the Hilbert algebra of $G$\,,
\cite{Di}\,13.10) the two extensions of $\Cal F$ coincide. For 
$f\in L^1(G)\cap A(G)$ we have
$\Cal F(f)\in\Cal B_1^\oplus(\widehat G,\nu)$ and
$\lVert f\rVert_A=\lVert\Cal F(f)\rVert_1$ \,(note that $\lVert\cdot\rVert_A$
does not depend Haar measure, but $\Cal F(f)$ does and this is compensated
by the Plancherel measure). $\Cal F$ extends to an
isometric isomorphism of Banach spaces
$A(G)\to\Cal B_1^\oplus(\widehat G,\nu)$ \;(again denoted by $\Cal F$\,).
For $f\in L^2(G)\cap A(G)$  the two extensions of $\Cal F$ coincide
(recall also that for unimodular $G\,,\ \VN(G)\cap A(G)\subseteq L^2(G)$\,).
The inverse mapping, for $T=(T_\pi)\in\Cal B_1^\oplus(\widehat G,\nu)\,,\
u=\Cal F^{-1}(T)\in A(G)$ is given by
$u(x)=\int_{\widehat G}\tr(\pi(x)^*\,T_\pi)\,d\nu(\pi)\ \ (x\in G)$\,.
\\[0mm plus .5mm]
In the general case (if $G$ is not necessarily unimodular), let $\Delta$
be the modular function for $G$\,. In \cite{DM}\,Th.\,5 it is shown that
there exists a measurable family $(K_\pi)$ of unbounded (densely defined)
operators (called Duflo--Moore operators) such that for $\nu$--almost all
$\pi\,,\ K_\pi$ is positive definite,
self-adjoint (on $\Cal H_\pi$) and satisfies the semi-invariance relations
\,$\pi(x)^*K_\pi\,\pi(x)=\Delta(x)K_\pi$ for all $x\in G$
\,(\cite{F}\,Th.\,3.48
uses $C_\pi=K_\pi^{-\frac12}$). This gives rise to a modified Fourier
transform
$\Cal F_2(f)=([\pi(f)K_\pi^{\frac12}])\ \ (\,[\cdot]$ denoting the closure of
the operator -- if it exists). For $f\in L^1(G)\cap L^2(G)$ one has
$\Cal F_2(f)\in\Cal B_2^\oplus(\widehat G,\nu)$ and
$\lVert f\rVert_2=\lVert\Cal F_2(f)\rVert_2$\,,
\;$\Cal F_2$ extends to an isometric isomorphism of Hilbert spaces
$L^2(G)\to\Cal B_2^\oplus(\widehat G,\nu)$ \;(again
denoted by $\Cal F_2$\,). The modification \,$\Cal F_A(f)=([\pi(f)K_\pi])$
\,satisfies for $f\in L^1(G)\cap A(G)$ that
\,$\Cal F_A(f)\in\Cal B_1^\oplus(\widehat G,\nu)$ and
\,$\lVert f\rVert_A=\lVert\Cal F_A(f)\rVert_1$\,,%
\;$\Cal F_A$ extends to an
isometric isomorphism of Banach spaces
$A(G)\to\Cal B_1^\oplus(\widehat G,\nu)$ \;(again
denoted by $\Cal F_A$\,). Similar to the unimodular case, the extensions
are compatible. For example, when\linebreak $f\in L^2(G)\cap\VN(G)$ (the left
bounded elements of $L^2$) one has
$\Cal F_2(f)_\pi=[\Cal F(f)_\pi K_\pi^{\frac12}]$ $\nu$--a.e.
The inverse mapping, for
\,$T=(T_\pi)\!\in\Cal B_1^\oplus(\widehat G,\nu)\,,\
u=\Cal F_A^{-1}(T)\in A(G)$ is again given by
$u(x)=\int_{\widehat G}\tr(\pi(x)^*\,T_\pi)\,d\nu(\pi)\ \ (x\in G)$\,.
The Plancherel measure $\nu$ depends in the general case also on the choice
of the family $(K_\pi)$\,, different Plancherel measures are equivalent
(as noted in \cite{F}\,Th.\,4.4, the formula for
$\Cal F_A^{-1}$ does not involve the Duflo--Moore operators). The formulas
given before for the unimodular
case amount to take for $K_\pi$ identity operators.
\\[0mm plus 1mm]
The isomorphism $\Cal F_A$ (resp. $\Cal F$) can be used to transfer
pointwise multiplication on $A(G)$ to define a multiplication $\sharp$ on
$\Cal B_1^\oplus(\widehat G,\nu)$ called {\it dual convolution}.
Thus for $S,T\in \Cal B_1^\oplus(\widehat G,\nu),\
\;S\sharp T=\Cal F_A(\,\Cal F_A^{-1}(S)\,\Cal F_A^{-1}(T)\,)$.
\\[0mm plus 1mm]
For a Lie group $G$ one can also extend a representation $\pi$
to the algebra of distributions of compact support. Then $\pi(d)$ will
in general be an unbounded operator, defined e.g. on the G\aa rding
subspace of $\Cal H_\pi$\,. This applies in particular to the elements of
the Lie algebra of $G$ which can be seen as distributions
supported by $\{e\}$ \,(see \cite{Wa}\;Sec.\,4.4.1).\vspace{3mm}

\noindent
We want to add here
some comments about a statement of \cite{F}. In proving
results on pointwise inversion of the Plancherel transform F\"uhr asserts
about a "gap" in a
density argument used by earlier authors and introduces a new method based on
positivity and bounded vectors (see also \cite{F} Rem.\,4.16(a)).
The author seems to be unaware of Stinespring's paper \cite{S} (in particular
Th.\,9.17 from there and its proof).  Terminology and notation of \cite{S}
is quite different. For the unimodular case, we give here
an argument for the (critical) "only if"-part of \cite{F} Th.\,4.15, in the
spirit of  \cite{S}\,Rem.\,9.4, adapted to our notation above
(which is close to  \cite{F}).
The general (non--unimodular) case is sketched in  Remark (b) below.
\\[1mm plus 1mm]
For clarity, we denote here the extension of $\Cal F$ to $L^2(G)$ by
$\Cal P$ (Plancherel transform). For $A\in B_1^\oplus,\ a=\Cal F_A^{-1}(A)$,
$G$ unimodular (and type I), we want to
show that $a\in L^2(G)$ implies $A\in B_2^\oplus$ and $A=\Cal P(a)$.
\\[0mm plus 1mm]
First, we treat the special case where, in addition, $\Cal P(a)\in B_1^\oplus$
holds. Put $A^{(0)}=A-\Cal P(a),\ a_0=\Cal F_A^{-1}(A^{(0)})$.
Then $A^{(0)}\in B_1^\oplus$ and for $g\in (L^1\cap L^2)(G)$ an easy
computation, using the Parseval equality (\cite{F}\,Th.\,4.4) and 
unitarity of  $\Cal P$ (\cite{F}\,Th.\,3.31), shows that
$\int_{\widehat G}\tr(\pi(g)\,A^{\!(0)\,*}_\pi)\,d\nu(\pi)=0$.
\\[0mm plus 1mm]
Recall that $\Cal F$ extends to a w*-continuous isomorphism of  the
von Neumann algebras $\VN(G)$ and
$\int_{\widehat G}^\oplus \Cal B(\Cal H_\pi)\,d\nu(\pi)$. $B_1^\oplus$ is the
predual of
$\int_{\widehat G}^\oplus \Cal B(\Cal H_\pi)\:d\nu(\pi)$
\,(\cite{T1}\,Ch.IV Prop.\,8.34) and the Parseval equality of
\cite{F}\,Th.\,4.4
says that $\Cal F_A^{-1}$ is the dual mapping for this isomorphism
(for the sesquilinear duality).
It follows that $\{\Cal F(g):\;g\in (L^1\cap L^2)(G)\}$ is w*-dense in
$\int_{\widehat G}^\oplus \Cal B(\Cal H_\pi)\:d\nu(\pi)$, hence
$A^{(0)}=0$, i.e., $A_\pi=\Cal P(a)_\pi$ holds $\nu$-a.e.
\\
For the general case, take $h\in L^1(G)$. Then by Fubini's theorem
(see \cite{F}\,p.\,81) and continuity of $\Cal P$, we have
$\Cal P(h\star a)=\Cal F(h)\circ\Cal P(a)$ and similarly,
$\Cal F_A^{-1}(\Cal F(h)\circ A)=h\star a$\,.
If $h\in (L^1\cap L^2)(G)$ then $\Cal F(h)=\Cal P(h)\in B_2^\oplus$,
hence $\Cal F(h)\circ\Cal P(a)\in B_1^\oplus$. Thus $\Cal F(h)\circ A$
satisfies the additional requirement of the special case and it follows
that $\pi(h)\circ A_\pi=\pi(h)\circ\Cal P(a)_\pi$ holds $\nu$-a.e.
(with an exceptional set depending possibly on $h$).
\\
For $c>0$, put $M_c=\{\pi\in\widehat G:\;\lVert A_\pi\rVert_1\le c\}$
and $A^{(c)}_\pi=A_\pi-\Cal P(a)_\pi$ for $\pi\in M_c$\,,
$A^{(c)}_\pi=0$ else,
$A^{(c)}=(A^{(c)}_\pi)$. Then
$\lVert A_\pi\rVert_2\le\lVert A_\pi\rVert_1$ implies
$A^{(c)}\in B_2^\oplus$ and we get  
$\int_{\widehat G}\tr(\pi(h)\,A^{(c)}_\pi)\,d\nu(\pi)=0$
for all $h\in (L^1\cap L^2)(G)$. Since
$\{\Cal P(h):\;h\in (L^1\cap L^2)(G)\}$ is dense in $B_2^\oplus$, it follows
that $A^{(c)}=0$ in $B_2^\oplus$, thus  $A_\pi=\Cal P(a)_\pi$ a.e. on
$M_c$\,. $c>0$ was arbitrary, hence we arrive at $A=\Cal P(a)\in B_2^\oplus$.
\\
A similar argument shows that just assuming $a\in\VN(G)$ already implies
$a\in L^2(G)$. Also one gets that $\Cal P(Bg)=\Cal F(B)\circ\Cal P(g)$
for $B\in\VN(G),\;g\in L^2(G)$ and then that $\Cal P$ coincides with
$\Cal F$ on $\VN(G)\cap L^2(G)$.

\begin{Rems}
(a) Some care is needed when using the duality between $A(G)$ and $\VN(G)$.
Traditionally it is obtained from the extension of the pairing \;
$\langle f,g\rangle\;=\linebreak
\int_Gf(x)\,g(x)\,dx\ \;(f\in A(G),\;g\in L^1(G))$ \,(and then the dual
$A(G)$-module structure on $\VN(G)$ becomes the extension of pointwise
multiplication). Under Fourier transform there is an extended Parseval
equality \
$\int_Gf(x)\,\overline{g(x)}\,dx=\\
\int_{\widehat G}\tr\bigl(\pi(g)^*\,\Cal F_A(f)_\pi\bigr)\,d\nu(\pi)$. To get
expressions for $\langle f,g\rangle$ (and more generally for
$\langle f,T\rangle$ where $T\in\VN(G)$\,) one has to analyze the effect of
the conjugation $g\mapsto\bar g$ (which extends also to a semi-linear
automorphism of $\VN(G)$\,) on the disintegration (see also \cite{F}\,p.110).
\item{(b)} \,The density argument above works similarly (with some more
technicalities)
for non-unimodu\-lar groups $G$ (of type I and second countable). Let
$K=(K_\pi)$ be the field of Duflo-Moore operators, and similar to \cite{F},
\,$[T]=([T_\pi])$ shall denote the field built from the closures of
a field of densely defined operators $T=(T_\pi)$. Then for $A,a$ as above,
the aim is to show
that $a\in L^2(G)$ implies $[A\circ K^{-\frac12}]=\Cal F_2(a)\in B_2^\oplus$.
If in
addition $[\Cal F_2(a)\circ K^{\frac12}]\in B_1^\oplus$ holds, this can be done as in
the unimodular case above. For $u\in L^1(G)$ one has
$\Cal F_A^{-1}(A\circ\Cal F(u))=a\star(\Delta\,u)$.
Then observe that for those $\pi$ where the
semi-invariance relations hold (thus for $\nu$-almost all $\pi$) one
gets for $(1+\Delta^{-\frac12})\,f\in L^1(G)$ that
\,$\pi(f)\circ K^{\frac12}_\pi\subseteq
K^{\frac12}_\pi\pi(\Delta^{-\frac12}f)$  (in the sense of unbounded operators,
i.e., the right operator is an [in general proper] extension of the left one).
It follows also that
$[K^{-\frac12}\circ\Cal F_2(f)\circ K^{\frac12}]=\Cal F_2(\Delta^{-\frac12}f)$ \,for 
$(1+\Delta^{-\frac12})\,f\in L^2(G)$ \,(compare \cite{F}\,L.\,4.14).
This implies
$\Cal F_2(f)\circ\Cal F_2(v)=
[\Cal F_2(f\star(\Delta^\frac12v))\circ K^{\frac12}]$ for
$f\in L^2(G), v\in(L^1\cap L^2)(G)$ and using this, one can treat the general
case similar as above.
\item{(c)} \,The Plancherel theorem was shown by Duflo--Moore \cite{DM} for an
arbitrary second countable group $G$ for which the left regular representation
$\lambda$ is of type I (in particular, for all second countable groups of type
I). The restrictions in the earlier work of Tatsuuma, Kleppner, Lipsman
are unnecessesary: if $\lambda$ is of type I, then the same is true for the
regular representation of $N=\ker\Delta$ \,(\cite{DM} p.240);\linebreak
"$N$ regularly embedded" (used also in \cite{F}) can be replaced by a weaker
property which as
they show is always satsfied when $\lambda$ is of type I\,.
\item{(d)} \,The  Plancherel theorem (and the consequences above) extends
to $\sigma$-compact $G$ (instead of second countable): \;for
$\fr K=
\{K: \text{compact normal subgroup of } G \text{ with }\linebreak
G/K \text{ second countable}\}$ the Kakutani-Kodaira theorem (see
\cite{LN}\,L.\,24) says that
$G$ is the projective limit of $(G/K)_{K\in\fr K}$\,. This
implies that $\widehat G=\bigcup_{K\in\fr K}\widehat{G/K}$  (more precisely,
$\widehat{G/K}$ is homeomorphic to an open-closed subset of $\widehat G$).
Similarly to \cite{LN}\,L.\,1 there are embeddings of $L^2(G/K)$ into
$L^2(G)$
and $L^2(G)=\bigcup_{K\in\fr K}L^2(G/K)$. $\VN(G/K)$ is isomorphic to a
von Neumann subalgebra of $\VN(G)$ and \;$\bigcup_{K\in\fr K}\VN(G/K)$ is
w*-dense in $\VN(G)$. $A(G/K)$ is isomorphic to a subalgebra of $A(G)$ and
\;$A(G)=\bigcup_{K\in\fr K}A(G/K)$ \,(see \cite{KL}\,Prop.\,2.4.2).
When $G$ is not second countable
(but $\sigma$-compact) the Plancherel measure is no longer $\sigma$-finite.
\item{(e)} \,In \cite{S} a duality theory (including Plancherel theorem and
Fourier inversion) is developed for arbitrary unimodular locally compact
groups (with no topological countability assumptions and no restrictions on the
type). This extends to the non-unimodular case using the Plancherel weight
(\cite{T2} p.\,67). It gives rather abstract statements in terms of
unbounded operators on $L^2(G)$. If the group is second countable (or just
$\sigma$-compact), there is a disintegration of the weight built up on
the central disintegration of the left regular representation. Explicit
examples can be found in a paper of Kajiwara (ref.[69] in \cite{F}). In
the non-type I case, non-trivial subfactors of $ \Cal B(\Cal H_\sigma)$
(and non-trivial weights) arise. In the examples these are often related
to ergodic theory.
\end{Rems}

\end{document}